\numberwithin{equation}{section}
\newtheorem{thm}{Theorem}
\numberwithin{thm}{section}
\newtheorem{lem}[thm]{Lemma}
\newtheorem{theo}[thm]{Theorem}
\newtheorem{coro}[thm]{Corollary}
\newtheorem{propo}[thm]{Proposition}
\newtheorem{cla}[thm]{Claim}
\newtheorem*{theorem*}{Theorem}
\theoremstyle{definition}
\newtheorem{defn}[thm]{Definition}
\newtheorem{rem}[thm]{Remark}
\DeclareMathOperator{\supp}{supp} 
\DeclareMathOperator{\dist}{dist}
\DeclareMathOperator{\cardd}{card}
\newcommand{\card}[1]{\cardd\left(#1\right)}
\newcommand{\symdiff}{\div} 
\newcommand{\und}[1]{\underline{#1}}
\renewcommand{\phi}{\varphi}
\newcommand{\dbar}{\bar d}
\newcommand{\dund}{\underline{d}}
\DeclareMathOperator{\M}{\mathcal{M}}
\newcommand{\RR}{\mathbf{R}}
\newcommand{\eps}{\varepsilon}
\newcommand{\Z}{\mathbb{Z}}
\newcommand{\N}{\mathbb{N}}
\newcommand{\fD}{\mathfrak{D}}
\newcommand{\fF}{\mathfrak{F}}
\newcommand{\fd}{\mathfrak{d}}
\newcommand{\mru}{{\mathrm{u}}}
\newcommand{\mruu}{{\mathrm{uu}}}
\newcommand{\mrss}{{\mathrm{ss}}}
\newcommand{\mrc}{{\mathrm{c}}}
\def\MM{{\mathbb M}} \def\NN{{\mathbb N}}  
 \def\RR{{\mathbb R}}  
 \def\ZZ{{\mathbb Z}}
\def\La{\Lambda}
\def\Ga{\Gamma}
\newcommand{\eqdef}{\stackrel{\scriptscriptstyle\text{\rm def}}{=}}
\newcommand{\cA}{\mathcal{A}}\newcommand{\cD}{\mathcal{D}}
\newcommand{\cF}{\mathcal{F}}
\newcommand{\cI}{\mathcal{I}}
\newcommand{\cM}{\mathcal{M}}\newcommand{\cO}{\mathcal{O}}\newcommand{\cP}{\mathcal{P}}
\newcommand{\cR}{\mathcal{R}}\newcommand{\cS}{\mathcal{S}}\newcommand{\cT}{\mathcal{T}}
\newcommand{\cU}{\mathcal{U}}\newcommand{\cV}{\mathcal{V}}
\newcommand{\cZ}{\mathcal{Z}}
\def\diff{\operatorname{Diff}}
\def\Diff{\operatorname{Diff}}
\def\dim{\operatorname{dim}}
\def\supp{\operatorname{supp}}
\begin{document}

\title[Nonhyperbolic ergodic measures: positive entropy and full support]{Robust existence of nonhyperbolic ergodic measures with positive entropy and full support}
\date{\today}

\author[Ch.~Bonatti]{Christian Bonatti}
\address{Institut de Math\'ematiques de Bourgogne}
\email{bonatti@u-bourgogne.fr}

\author[L.~J.~D\'\i az]{Lorenzo J.~D\'\i az}
\address{Departamento de Matem\'atica, Pontif\'{\i}cia Universidade Cat\'olica do Rio de Janeiro} 
\email{lodiaz@mat.puc-rio.br}

\author[D.~Kwietniak]{Dominik Kwietniak}
\address{Faculty of Mathematics and Computer Science, Jagiellonian University in Krak\'ow}
\urladdr{http://www.im.uj.edu.pl/DominikKwietniak/}
\email{dominik.kwietniak@uj.edu.pl}

\begin{abstract}
We prove that for some manifolds $M$ the set of robustly transitive partially hyperbolic diffeomorphisms of $M$ with one-dimensional nonhyperbolic centre direction contains a $C^1$-open and dense subset of diffeomorphisms with nonhyperbolic measures which are ergodic, fully supported and have positive entropy. To do so, we formulate abstract conditions sufficient for the construction of an ergodic, fully supported measure $\mu$ which has positive entropy and is such that for a continuous function $\phi\colon X\to\mathbb{R}$ the integral $\int\phi\,d\mu$ vanishes. The criterion is an extended version of the \emph{control at any scale with a long and sparse tail} technique coming from the previous works. 
\end{abstract}

\begin{thanks}{This research has been supported  [in part] by CAPES - Ci\^encia sem fronteiras,
CNE-Faperj, and CNPq-grants (Brazil), the research of DK was supported by the National Science Centre (NCN) grant
2013/08/A/ST1/00275 and his stay in Rio de Janeiro, where he joined this project was possible thanks to
the CAPES/Brazil grant no. 88881.064927/2014-01.
The authors  acknowledge the hospitality of PUC-Rio and IM-UFRJ.
LJD thanks the hospitality and support of
ICERM - Brown University during the thematic semester 	
``Fractal Geometry, Hyperbolic Dynamics, and Thermodynamical Formalism''.}
\end{thanks}
\keywords{Birkhoff average,
entropy,
ergodic measure,
Lyapunov exponent,
nonhyperbolic measure,
partial hyperbolicity,
transitivity}
\subjclass[2000]{%
37D25, 
37D35, 
37D30, 
28D99
}


\maketitle



\section{Introduction}

Our motivation is the following problem:
\emph{To what extent does ergodic theory detect the nonhyperbolicity of a dynamical system?  Do nonhyperbolic dynamical systems always have a nonhyperbolic ergodic measure?}
These questions originated with a construction presented in \cite{GIKN} and inspired many papers exploring the properties of nonhyperbolic ergodic measures.

We emphasise that the answer to the second question be ``no'' in general: there are examples of nonhyperbolic diffeomorphisms
whose all ergodic measures are hyperbolic (even with all Lyapunov exponents uniformly bounded away from $0$), see \cite{BBS}. However,  these examples are ``fragile'': ergodic nonhyperbolic measures reappear after an arbitrarily small perturbation of these diffeomorphisms.

Thanks to the works of Abraham and Smale \cite{AbSm} and Newhouse \cite{New} it is known since late sixties that there exist open sets of nonhyperbolic systems. On the other hand, the first examples of open sets of diffeomorphisms with nonhyperbolic ergodic measures appeared just recently, in 2005 (see \cite{KN}). The construction in  \cite{KN} uses the \emph{method of periodic approximations} introduced in \cite{GIKN} (we will outline this method later). Note that this technique works in the specific setting of partially hyperbolic diffeomorphisms of the three-dimensional torus $\mathbb{T}^3$ with compact centre leaves.

The existence of nonhyperbolic ergodic measures for some nonhyperbolic systems raises immediately further questions, which we address here: \emph{Which nonhyperbolic dynamical systems have ergodic nonhyperbolic measures? What is the support of these measures?
What is their entropy? How many zero Lyapunov exponents do they have? How about other ergodic-theoretic properties of these measures?}

The following theorem is a simplified version of our main result. For the full statement see Theorems \ref{t.openanddense} and \ref{t.average} below.
\begin{theorem*}\label{thm:one}
For every $n\ge 3$ there are a  closed manifold $M$ of dimension
 $n$, a nonempty open set $\mathcal{U}$ in the space $\Diff^1(M)$ of $C^1$-diffeomorphisms defined
on $M$, and a constant $C>0$
such that every $f\in \mathcal{U}$ has a nonhyperbolic invariant measure $\mu$ (i.e., with some zero Lyapunov exponent) satisfying:
 \begin{enumerate}
  \item \label{tp1} $\mu$ is ergodic,
  \item \label{tp2} the support of $\mu$ is the whole manifold $M$,
  \item \label{tp3} the entropy of $\mu$ is larger than $C$.
 \end{enumerate}
\end{theorem*}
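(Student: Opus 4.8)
The plan is to deduce the statement from the two results announced in the excerpt: the abstract criterion (Theorem~\ref{t.average}), which manufactures an ergodic, fully supported, positive-entropy measure $\mu$ with $\int\phi\,d\mu=0$ out of purely combinatorial-dynamical data, and the perturbation/genericity statement (Theorem~\ref{t.openanddense}). First I would fix the manifold and the open set: for $n=3$ take $M=\mathbb{T}^3$ and $\mathcal{U}$ a $C^1$-open set of robustly transitive partially hyperbolic diffeomorphisms with one-dimensional central bundle $E^c$, chosen inside the family behind \cite{KN} but with an entropy-producing modification in the central direction, so that each $f\in\mathcal{U}$ has two hyperbolic periodic points whose central Lyapunov exponents have opposite signs and which lie in a single robustly transitive class; for $n>3$ take suitable fibered/skew-product extensions by Anosov factors, which only enlarge the stable and unstable bundles. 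The relevant observable is the central cocycle $\phi_f=\log\|Df|_{E^c}\|$, so that $\int\phi_f\,d\mu=0$ is exactly nonhyperbolicity of $\mu$.

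Next I would extract, for each $f\in\mathcal{U}$, the data required by Theorem~\ref{t.average}. Using robust transitivity together with the two periodic points of opposite central behaviour, one produces (robustly, via connecting and blender-type arguments) a locally maximal hyperbolic set $\Gamma_f$ that is \emph{flexible} in the centre: it carries a subsystem of positive topological entropy $h_0>0$, bounded below uniformly on $\mathcal{U}$, and inside it one finds, at every scale, large families of orbit segments whose central Birkhoff sums can be prescribed to lie on either side of $0$. Coding $\Gamma_f$ symbolically, the \emph{control at any scale with a long and sparse tail} scheme proceeds inductively: at step $k$ one concatenates an exponentially (in length) growing set of ``entropy'' segments drawn from $\Gamma_f$ with short ``steering'' segments that (i) drive the central Birkhoff sum back across $0$ and (ii) visit an $\varepsilon_k$-dense subset of $M$ with $\varepsilon_k\to 0$; the steering segments occupy an asymptotically vanishing proportion of the total length — the long and sparse tail — which is exactly what lets entropy, full support, and the vanishing of the central average all pass to the weak-$*$ limit. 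Running the GIKN-type estimates on the measure of ``bad'' sets then upgrades the limit to an ergodic measure $\mu$ with $\supp\mu=M$, $\int\phi_f\,d\mu=0$, and $h_\mu(f)\ge \tfrac12 h_0=:C>0$.

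Finally, robustness: persistence of the two periodic points, robust transitivity, robustness of the flexible hyperbolic set $\Gamma_f$ together with its uniform entropy lower bound, and the uniformity of the steering estimates are all $C^1$-open conditions, so the construction runs uniformly over $\mathcal{U}$ with a single constant $C$; combining with density of the ``good'' diffeomorphisms inside the robustly transitive partially hyperbolic ones yields the open-and-dense conclusion of Theorem~\ref{t.openanddense}.

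The main obstacle is the \emph{simultaneous} control in the inductive scheme. One must keep the central Birkhoff average uniformly near $0$ along an entire positive-entropy set of segments while also performing the scale refinement and the density excursions, and one must show that the cost of the steering corrections is small enough that they form a genuinely sparse tail — otherwise the limit measure degenerates to zero entropy (as in the original \cite{GIKN} construction) or fails to be fully supported. Quantitatively, the flexible horseshoe must admit, at each scale, enough segments of \emph{both} central signs of comparable length and combinatorial richness, and the bookkeeping of how entropy, support, and the constraint $\int\phi_f\,d\mu=0$ interact must be arranged so that all three survive the limit with bounds independent of $f\in\mathcal{U}$. Adapting the classical periodic-approximation ergodicity estimates from single periodic orbits to these families of segments is the other delicate point.
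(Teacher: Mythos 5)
Your high-level reduction is right: once one knows that $\cR\cT(M)$ is nonempty for $M=\mathbb{T}^n$ (or any manifold carrying a transitive Anosov flow, by~\cite{BD-robtran}) and that Theorem~\ref{t.openanddense} supplies an open-and-dense $\cZ(M)\subset\cR\cT(M)$ together with a local entropy lower bound $c_f$, the simplified statement follows by fixing one $f_0\in\cZ(M)$, taking $\mathcal U=\cV_{f_0}\cap\cZ(M)$ and $C=c_{f_0}$, and using $\varphi_f=\mathrm{J}^{\mathrm{c}}_f$ as the observable so that $\int\varphi_f\,d\mu=0$ is exactly nonhyperbolicity. That part matches the paper.

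However, the mechanism you sketch for getting the simultaneous conclusions (ergodic, full support, positive entropy, zero average) is not the one the paper uses, and as described it would not work. You propose to ``run the GIKN-type estimates on the measure of bad sets'' to upgrade the limit to an ergodic measure. This is precisely what the paper must avoid: by~\cite{KL}, any measure obtained as a GIKN/periodic-approximation limit is loosely Kronecker and has entropy zero, so that route is incompatible with conclusion~\eqref{tp3}. In the paper, ergodicity is not produced by the inductive estimates at all: one first constructs a single controlled point $x$ (via the double flip-flop family with sojourns, Proposition~\ref{p.maintech}), shows that every measure it generates has zero average, full support, and entropy bounded below, and then passes to an ergodic component via the ergodic decomposition, checking that almost every component inherits the first two properties and that some component inherits the entropy bound.

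Two further points in your scheme are off. First, you state that the steering segments (the tail) occupy ``an asymptotically vanishing proportion of the total length.'' In fact a rational $\cT$-sparsely long tail $R_\infty$ must satisfy $0<d(R_\infty)<1$: the sparseness is scale-by-scale ($\le 1/\kappa_n$ inside a $T_n$-interval), but the overall density of the tail is positive, and crucially its complement $J=\mathbb{N}\setminus R_\infty$ is a rational (hence completely deterministic) set with $d(J)<1$. Second, the entropy lower bound does not come from a uniform-entropy horseshoe $\Gamma_f$ nor does it have the form $\tfrac12 h_0$. The actual criterion (Theorem~\ref{thm:mainbis}) requires arranging that the $\mathbf K$-itinerary of $x$ over the deterministic set $J$ reproduces a generic point of the Bernoulli measure $\xi_{1/2}$; one then shows, via a joining argument and Abramov's formula, that the induced system on $E=([0]_0\cup[1]_0)\times[1]_0$ has a Bernoulli factor, giving $h_\mu\ge\underline d(J)\cdot\log 2$. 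Getting the Bernoulli pattern on $J$ is exactly why the single flip-flop family has to be upgraded to a \emph{double} flip-flop family (Proposition~\ref{p.yieldsdouble}): one half of the splitting controls the sign of the central average, the other half records which of $K_0$, $K_1$ is visited. Your proposal has no analogue of this doubling, so the positive-entropy conclusion has no source, and conflating the estimate with GIKN-style shadowing would in fact collapse the entropy to zero.
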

The Theorem applies to any manifold allowing a robustly transitive diffeomorphism which has partially hyperbolic splitting with one-dimensional centre. In particular, by \cite{BD-robtran} it applies to
\begin{itemize}
\item the $n$-dimensional torus ($n\ge 3$),
\item (more generally) any manifold carrying a transitive Anosov flow.
\end{itemize}

The nonhyperbolic measure $\mu$ as in the Theorem above apart from being ergodic, fully supported and having positive entropy, is also \emph{robust}. Namely, our proof shows that the three former properties appear \emph{robustly} in the space $\Diff^1(M)$ (i.e. we provide an open set of diffeomorphisms with an ergodic nonhyperbolic fully supported measure of positive entropy). Several previous works established the existence of
nonhyperbolic measures in similar settings as in Theorem above and these measures have some (but not all) of the properties listed above. These references present two approaches to the construction of ergodic nonhyperbolic measures: the first one is the already mentioned \emph{method of periodic approximations} \cite{GIKN}, the second is the method of generating a nonhyperbolic measure by a \emph{controlled point} \cite{BBD:16,BDB:}. Note that  \cite{BZ} combines these two methods. Both schemes are detailed in Section \ref{ss.compar}. A discussion of  previous results on nonhyperbolic ergodic measures see Section \ref{ss.history} or \cite{D-ICM} for a more comprehensive survey on that topic.

Here we further extend the method of construction of a controlled point presented in \cite{BDB:} and complement it by an abstract result in ergodic theory (see the theorem about entropy control and the discussion in Section \ref{ss.control}). These allows us to address simultaneously all four properties listed above.

\subsection{Precise results for robustly transitive diffeomorphisms}\label{ss.precise}
In what follows $M$ denotes a closed compact manifold, $\Diff^1(M)$ is the space of $C^1$-diffeo\-morphisms
of $M$ endowed with the usual $C^1$-topology, and $f\in \Diff^1(M)$. For $\Lambda\subset M$ we write
$\cM_f(\Lambda)$ for the set of all $f$-invariant measures with support contained in $\Lambda$.
A  $Df$-invariant splitting $T_\La M=E\oplus F$ is \emph{dominated} if
there are constants $C>0$ and $\lambda<1$ such that
$\| Df^{n} E_{x}\|\cdot \| Df^{-n} F_{f^{n}(x)}\| < C  \lambda^n$
for every $x\in\Lambda$ and $n\in \NN$.
We say that a compact $f$-invariant set $\Lambda$
is \emph{partially hyperbolic with one-di\-men\-sio\-nal center}
if there is a $Df$-invariant
splitting with three nontrivial bundles\begin{equation}\label{e.ph}
T_\Lambda M = E^{\mrss} \oplus E^{\mrc} \oplus E^{\mruu}
\end{equation}
 such that
$E^\mathrm{ss}$ is uniformly contracting and
	$E^\mathrm{uu}$ is uniformly expanding, $\dim E^{\mrc}=1$, and the $Df$-invariant
splittings
$E^{\mathrm{cs} } \oplus E^{\mruu}$
and
$E^{\mrss} \oplus E^{\mathrm{cu}}$ are both dominated,
where
$E^\mathrm{cs}= E^{\mrss} \oplus E^{\mrc}$ and  $E^\mathrm{cu}= E^{\mrc} \oplus E^{\mruu}$.
The bundles
	$E^{\mruu}$ and $E^{\mrss}$ are called \emph{strong stable} and \emph{strong unstable},
	respectively,  and
	$E^\mrc$ is the \emph{center bundle}. We abuse a bit the terminology and say that the splitting given by \eqref{e.ph} is also \emph{dominated}.

 If $\Lambda$ is a partially hyperbolic set with one-dimensional center, then
the bundles $E^{\mrss}, E^{\mrc}, E^{\mruu}$ depend continuously on the point $x\in \Lambda$. Hence the
\emph{logarithm of the center derivative}
\begin{equation}\label{e.logmap}
 \mathrm{J}_f^{\mrc}(x) \eqdef \log | Df_x |_{E^\mrc (x)\setminus\{0\}}|
\end{equation}
is a continuous map. If, in addition, $\mu\in \cM_f(\Lambda)$ is ergodic, then  the  Oseledets Theorem
implies that there is a
number  $\chi^{\mrc} (\mu)$, called the {\emph{central Lyapunov exponent of $\mu$,}}
such that for $\mu$-almost every point $x\in \Lambda$ it holds
$$
\lim_{n \to \pm \infty} \frac{\log |Df^n_x (v)|}{n} = \int  \mathrm{J}_f^\mrc \, d\mu=\chi^{\mrc}(\mu), \qquad
\mbox{for every $v\in E^{\mrc} \setminus \{0\}$}.
$$
In particular, the function $\mu\mapsto\chi^{\mrc}(\mu)$ is continuous with respect to the weak$^*$ topology on $\cM_f(\Lambda)$.

A diffeomorphism $f\in \diff^1(M)$ is
\emph{transitive} if it has a dense orbit. The diffeomorphism $f$ is
\emph{$C^1$-robustly transitive}
if it belongs to the $C^1$-interior of the set of transitive diffeomorphisms
(i.e., all $C^1$-nearby diffeomorphisms are also transitive).

We denote by $\cR\cT(M)$ the $C^1$-open set of such diffeomorphisms $f\in\diff^1(M)$ that:
\begin{itemize}
\item
$f$ is robustly transitive,
\item
$f$ has a pair  of hyperbolic periodic points with different indices,
\item $M$ is a partially hyperbolic set for $f$ with one-dimensional center.
\end{itemize}
These assumptions imply that
$\dim(M)\ge 3$, because in dimension two robustly transitive diffeomorphisms are
always hyperbolic, see \cite{PuSa}. 

The set $\cR\cT(M)$ contains
well studied and interesting examples of diffeomorphisms. Among them there are:
different types of skew product diffeomorphisms, see
 \cite{BD-robtran,Sh};
 derived from Anosov diffeomorphisms, see \cite{Mda};
and perturbations of  time-one maps of transitive Anosov flows, see
\cite{BD-robtran}.

Our main theorem provides a measure $\mu$ which is ergodic \emph{and} has full support \emph{and} positive entropy, \emph{and} is such that
the integral $\int J_f^{\mrc}d\mu$ has a prescribed value ($=0$).

\begin{thm}
\label{t.openanddense}
There is a $C^1$-open and dense subset $\cZ(M)$ of $\cR\cT(M)$ such that
every $f\in \cZ(M)$ has an ergodic nonhyperbolic measure
with positive entropy and full support. Furthermore, for each
$f\in \cZ(M)$ there is a neighbourhood $\cV_f$ of $f$ and a constant $c_f>0$  such that
for every $g\in\cV_f\cap\cZ(M)$ we have $h(\mu_g)\ge c_f$.
\end{thm}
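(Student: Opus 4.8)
The plan is to reduce the theorem to two independent ingredients: a \emph{dynamical} step producing a robust ``rich'' skeleton of periodic orbits with center derivative of both signs, and an \emph{abstract ergodic-theoretic} step (the ``entropy control'' theorem announced in Section \ref{ss.control}) that assembles such a skeleton into a single ergodic measure with the three desired properties. First I would establish the open-and-dense part: starting from $f\in\cR\cT(M)$, robust transitivity together with the existence of two hyperbolic periodic points of different indices allows one (after an arbitrarily small $C^1$-perturbation, using Hayashi's connecting lemma and the standard machinery for creating robust heterodimensional cycles, cf.\ \cite{BD-robtran}) to produce a $C^1$-open set $\cV$ of diffeomorphisms admitting a partially hyperbolic \emph{blender-horseshoe} whose center derivative $\mathrm{J}_f^\mrc$ takes strictly positive values on some periodic orbits and strictly negative values on others, all inside a single transitive (hence chain-recurrent) picture. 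Defining $\cZ(M)$ to be the union of the interiors of all such $\cV$ over $f\in\cR\cT(M)$ gives a $C^1$-open set, and transitivity-driven density arguments (every $f\in\cR\cT(M)$ is $C^1$-approximated by one with such a configuration) give density in $\cR\cT(M)$.

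Next, for $f\in\cZ(M)$ I would feed the blender-horseshoe into the \emph{control at any scale with a long and sparse tail} scheme of \cite{BDB:}, in the extended form the authors develop here. The blender property guarantees that periodic pseudo-orbits with center derivative close to an arbitrary prescribed average (in particular $0$) can be $C^1$-robustly shadowed, and the horseshoe supplies the positive-entropy ``reservoir'': at every scale of the inductive construction one concatenates a controlled segment (keeping the running average of $\mathrm{J}_f^\mrc$ near $0$) with a long block coming from a full shift on many symbols (contributing a definite amount of entropy $c_f>0$, uniform on a neighbourhood of $f$ because the blender-horseshoe and the number of its symbols persist). The ``long and sparse tail'' bookkeeping ensures the limit measure $\mu$ is ergodic and that the entropy contributed by the reservoir is not destroyed in the limit, while a separate density argument — interleaving, at ever sparser times, excursions that visit an arbitrarily fine $\eps$-net of $M$, which is possible precisely because $f$ is transitive and the blender lets us connect anything to anything — forces $\supp\mu=M$. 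Invoking the abstract entropy-control theorem with $\phi=\mathrm{J}_f^\mrc$ then yields an ergodic $\mu$ with $\int\mathrm{J}_f^\mrc\,d\mu=0$ (so $\chi^\mrc(\mu)=0$, i.e.\ $\mu$ is nonhyperbolic), $h(\mu)\ge c_f$, and full support; taking $\cV_f$ to be the neighbourhood on which the blender-horseshoe and its symbol count persist gives the uniform lower bound for all $g\in\cV_f\cap\cZ(M)$.

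The main obstacle, and the technical heart of the paper, is making these three requirements — vanishing center integral, positive entropy, full support — \emph{simultaneously} compatible inside one ergodic measure. Each pair is classical, but combining all three strains the periodic-approximation philosophy: controlling the Birkhoff average of $\mathrm{J}_f^\mrc$ pushes toward measures supported near the ``neutral'' part of the blender, positive entropy demands genuine branching (a horseshoe, not a single orbit), and full support demands long wandering excursions that a priori spoil both the average and the entropy estimate. The extended control-at-any-scale induction must therefore allocate, at each scale, \emph{three} competing types of blocks with carefully chosen lengths (a ``centering'' block, an ``entropy'' block, and a ``density'' block), arranged so that the centering blocks dominate the average, the entropy blocks occur with a positive frequency bounded below, and the density blocks are sparse enough (the ``long and sparse tail'') not to interfere — and one must prove that the resulting sequence of measures converges to an ergodic limit retaining all three features. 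Getting the quantitative estimates in this triple induction to close, with constants uniform on a $C^1$-neighbourhood, is where the real work lies; everything else is assembling known perturbation tools (\cite{BD-robtran}) and the ergodic-theoretic criterion proved in Section \ref{ss.control}.
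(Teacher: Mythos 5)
Your overall strategy is on the right track --- perturb to get a blender in a flip-flop configuration, run the control-at-any-scale induction with a long sparse tail to force the centre average to $0$ and full support, and invoke the abstract entropy criterion --- and your explanation of why the lower bound $c_f$ persists (because the blender configuration and the relevant iterate persist on a $C^1$-neighbourhood) matches what the paper does in Theorem~\ref{t.l.h(q)}. But the architecture you describe for the induction, with \emph{three} competing block types (``centering'', ``entropy'', ``density''), has a genuine gap, and it is precisely the gap the paper's main new device is built to avoid. If the entropy blocks are ``long blocks coming from a full shift on many symbols'' and occur with positive frequency, then at the scales $T_n$ at which these blocks are visible, windows $\{t,\dots,t+T_n\}$ landing inside an entropy block see an uncontrolled Birkhoff average of $\mathrm J_f^{\mrc}$. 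The control-at-any-scale requirement (averages within $[-\alpha_n,\alpha_n]$ for \emph{every} $T_n$-regular window not swallowed by the tail) then fails, and you would lose the conclusion that the central Lyapunov exponent vanishes for the limit measure. You cannot fix this by making the entropy blocks sparse, since that drives their density to $0$ and kills the entropy bound; and you cannot put them inside the tail, since the tail must be completely deterministic for the joining argument in the proof of Theorem~\ref{thm:mainbis} to show that it contributes no entropy.

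The paper instead uses a \emph{two}-way split (regular part vs.\ tail), exactly as in \cite{BDB:}, and generates entropy \emph{inside} the regular part by replacing the flip-flop family by a \emph{double flip-flop family} $\fD=\fD^+_0\sqcup\fD^+_1\sqcup\fD^-_0\sqcup\fD^-_1$ (Definition~\ref{d.dflipflop}, Proposition~\ref{p.yieldsdouble}): at each regular step one independently chooses a sign $\pm$ to steer the running average and a digit $0/1$ to follow a prescribed Bernoulli-generic itinerary $\omega$ into the two disjoint compacts $K_0,K_1$. This is what makes average control and entropy production compatible at \emph{all} scales simultaneously, and it is the missing idea in your proposal. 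Two further points your sketch glosses over: (i) the entropy criterion requires the tail to be a \emph{rational} (hence completely deterministic) subset of $\NN$ --- longness and sparseness alone do not suffice --- which is why Section~\ref{s.scalesandtails} constructs an explicitly Toeplitz-type tail; and (ii) the uniform $c_f$ comes from a single iterate $N$ working for all $g\in\cU_f$ (the entropy bound is $d(J)\log 2/N$), which the paper establishes in Theorem~\ref{t.l.h(q)} by tracking the uniformity of $N$ in the construction of the flip-flop family, not merely from the persistence of ``the number of symbols''.
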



In what follows, given a periodic point $p$ of a diffeomorphism $f$ we denote by
$\mu_{\cO(p)}$ the unique $f$-invariant measure supported on the orbit ${\cO(p)}$ of $p$.
The next result is a reformulation and  an extension of Corollary 6 in \cite{BDB:} to our context:

\begin{thm}\label{t.average}
Consider an open subset $\cU$ of $\cR\cT(M)$
such that there is a continuous map defined on $\cU$
\[
f\mapsto (p_f,q_f)
\]
that associates to each
$f\in \cU$ a pair of hyperbolic periodic points with $\cO(p_f)\cap\cO(q_f)=\emptyset$.
 Let $\varphi\colon M\to\RR$ be a continuous function
 satisfying
$$
\int \varphi\, d\mu_{\cO(p_f)}<0<\int \varphi \,d\mu_{\cO(q_f)}, \quad \mbox{for every $f\in \cU$}.
$$

Then there is a  $C^1$-open
and dense
set $\cV$  of $ \cU$ such that every $f\in\cV$  has an ergodic measure
$\mu_f$ whose support is $M$, satisfies
$\int \varphi\, d\mu_f=0$, and has positive entropy. Furthermore, for each
$f\in \cV$ there is a neighbourhood $\cV_f$ of $f$ and a constant $c_f>0$  such that
for every $g\in\cV_f\cap\cV$ we have $h(\mu_g)\ge c_f$.
\end{thm}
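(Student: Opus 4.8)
The plan is to reduce the statement to an abstract ergodic-theoretic criterion — the ``control at any scale with a long and sparse tail'' technique announced in the introduction — applied to a suitable symbolic-like coding of orbits living in a transitive partially hyperbolic setting. First I would fix $f\in\cU$ and use robust transitivity together with the pair of hyperbolic periodic points $p_f,q_f$ of different indices to produce, after a $C^1$-small perturbation (this is where openness-and-density enters: the set $\cV$ is the set of $f$ for which the needed connecting/blending configuration is present, and it is open by continuity of $f\mapsto(p_f,q_f)$ and dense by a Hayashi-type connecting lemma / use of blenders), a \emph{blender-horseshoe} or at least a pair of nested hyperbolic sets $\Gamma_p\ni p_f$, $\Gamma_q\ni q_f$ together with heteroclinic connections between them, such that the central derivative behaves like a skew-product cocycle over a full shift. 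Concretely one wants a compact invariant set on which one can freely concatenate long finite orbit-segments shadowing $\cO(p_f)$ (where $\varphi$ integrates to a negative number and $J^\mrc$ is, say, contracting) with long finite orbit-segments shadowing $\cO(q_f)$ (where $\varphi$ integrates positive), with controlled transition blocks, and such that independent choices at infinitely many stages give exponentially many distinguishable orbits — this exponential multiplicity is what will force $h(\mu_f)>0$.

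Second, with this combinatorial skeleton in place I would invoke the abstract entropy-control theorem (the ``extended control at any scale with a long and sparse tail'' result referred to in Section \ref{ss.control}, which I am entitled to assume): one builds the measure $\mu_f$ as a weak$^*$ limit of measures $\mu_k$ equidistributed on increasingly long periodic orbits, where each $\mu_k$ is obtained from $\mu_{k-1}$ by replacing each orbit-segment by a longer one, inserting a carefully calibrated proportion of $q_f$-type blocks so that the running average of $\varphi$ is pushed back towards $0$, while the bulk of the word is a very long ``tail'' of $p_f$-type (or mixed) blocks chosen \emph{sparsely} enough that (i) the Birkhoff averages of $\varphi$ along $\mu_k$-typical orbits converge to $0$, forcing $\int\varphi\,d\mu_f=0$; (ii) the construction is Cauchy in the appropriate $\bar d$-like (Feldman--Katok / weak$^*$) metric, so the limit $\mu_f$ is ergodic; (iii) full support is arranged by a standard device — at stage $k$ one also forces every word of length $k$ in the alphabet of a dense collection of periodic orbits of $f$ (available by robust transitivity) to appear, contributing a vanishing proportion; and (iv) at each stage a definite fraction of free binary choices is retained, so that the generator-style entropy estimate gives $h(\mu_f)\ge c_f>0$ with $c_f$ depending only on the blender configuration, hence uniform on a neighbourhood $\cV_f$, which yields the last sentence. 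Since $\chi^\mrc(\mu_f)=\int J^\mrc\,d\mu_f$ (Oseledets, as recalled above) and, in the proof of Theorem \ref{t.openanddense}, $\varphi$ is taken to be $J^\mrc_f$ itself, the conclusion $\int\varphi\,d\mu_f=0$ says exactly that $\mu_f$ is nonhyperbolic.

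The verification that $\int\varphi\,d\mu_f=0$ and that the limit measure is genuinely ergodic with full support is routine once the tail is ``long and sparse'' in the precise quantitative sense of the abstract criterion; likewise the entropy lower bound is a bookkeeping of retained binary choices against the length blow-up. The main obstacle — and the step that requires the genuine extension over \cite{BDB:} — is \textbf{simultaneously} keeping positive entropy and driving the central (equivalently $\varphi$-) average to exactly zero: the corrective $q_f$-blocks needed to cancel the negative drift of the entropy-carrying $p_f$-blocks must be inserted frequently enough to control the average but rarely (and in long runs) enough not to destroy either the exponential orbit-multiplicity or the Cauchy/ergodicity estimate. Balancing these three competing budgets — drift of $\varphi$, entropy, and the metric convergence controlling ergodicity and support — at every scale is the technical heart, and it is precisely what the ``control at any scale with a long and sparse tail'' machinery is designed to do; the work here is to check that the machinery still applies when the ambient set is the non-symbolic partially hyperbolic set $M$ rather than a subshift, which is handled via the Lyapunov-chart/fake-foliation shadowing that makes the central cocycle genuinely look like a skew-product over a shift.
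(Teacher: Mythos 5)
Your proposal contains a genuine gap, and it is the central one: the mechanism you describe for producing the measure is precisely the mechanism the paper shows \emph{cannot} produce positive entropy.

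You propose to build $\mu_f$ ``as a weak$^*$ limit of measures $\mu_k$ equidistributed on increasingly long periodic orbits'' and to get ergodicity of the limit because ``the construction is Cauchy in the appropriate $\bar d$-like (Feldman--Katok / weak$^*$) metric.'' That is exactly the GIKN periodic-approximation scheme. As the paper explains in Sections \ref{ss.history} and \ref{ss.compar}, Kwietniak--{\L}{\c a}cka \cite{KL} proved that periodic measures arising from the GIKN criterion converge in the Feldman--Katok sense, and that this forces the limit to be loosely Kronecker and hence to have \emph{zero} entropy. So ``Cauchy in Feldman--Katok'' and ``retaining a definite fraction of free binary choices at each stage'' are mutually exclusive: if you have the former you lose the latter. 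Your step (iv) contradicts your step (ii). The abstract criterion you are ``entitled to assume,'' Theorem \ref{thm:mainbis}, is not of this type: it is a statement about a \emph{single} orbit $\{f^j(\bar x)\}$, a pair of disjoint compacta $\mathbf K=(K_0,K_1)$, and a rational (hence completely deterministic, positive-density) set $J\subset\N$ such that the $\mathbf K$-itinerary of $\bar x$ over $J$ is generic for $\xi_{1/2}$. Ergodicity is not obtained by a Cauchy estimate at all; the paper takes a weak$^*$ accumulation point $\mu$ of the empirical measures of the controlled point and then passes to an ergodic component of $\mu$ in its ergodic decomposition, using Propositions 2.2 and 2.17 of \cite{BDB:} together with Theorem \ref{thm:mainbis} to see that every measure generated by $\bar x$, and hence some ergodic component, has the three desired properties.

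Two secondary remarks. First, you read a hypothesis into the statement that is not there: Theorem \ref{t.average} only requires $\cO(p_f)\cap\cO(q_f)=\emptyset$ and the sign condition on $\int\varphi$; the indices are \emph{not} assumed different (the remark following the theorem says so explicitly). In fact the paper's proof first reduces, by an arbitrarily small perturbation as in \cite[Section 5.3]{BDB:}, to the case where $p_f$ and $q_f$ have the \emph{same} index and are homoclinically related. Second, your skeleton is centred on blenders, but for Theorem \ref{t.average} the paper does not produce a blender at all: once $p_f$ and $q_f$ are homoclinically related, the flip-flop family $\fF_f=\fF_f^+\sqcup\fF_f^-$ is assembled directly from discs in $W^{\mru}(p_f,f)\cup W^{\mru}(q_f,f)$ lying in $\fd$-neighbourhoods of $W^{\mru}_{loc}(p_f,f)$ and $W^{\mru}_{loc}(q_f,f)$, with the sign of $\varphi$ on the two pieces coming from the integral hypothesis. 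The local uniform entropy bound $c_f$ then comes from having a single $n$ (the power in ``flip-flop family for $g^n$'') that works simultaneously for all $g$ in a neighbourhood $\cV_f$ of $f$, not from ``a definite fraction of free binary choices'' persisting in a Feldman--Katok-Cauchy construction. To repair your proposal you would need to replace the periodic-approximation step entirely by the controlled-point construction: build, via a (double) flip-flop family with $f$-sojourns in $M$, a rational $\cT$-sparsely long tail $R_\infty$, and a point $x$ whose $\mathbf K$-itinerary over $J=\N\setminus R_\infty$ follows a prescribed $\xi_{1/2}$-generic sequence $\omega$, and only then apply Theorem \ref{thm:mainbis}.
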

\begin{rem}
Note that in Theorem~\ref{t.average} there is no condition on the indices of the periodic points.
Observe  also that Theorem~\ref{t.openanddense} is not a particular case of
Theorem~\ref{t.average}:
in Theorem~\ref{t.openanddense} there is no fixed map $\varphi$,
the considered map  is the logarithm of the center derivative $J_f^{\mrc}$ and therefore it changes with $f$ (i.e.,
there is a family of maps $\varphi_f$ depending on $f$).
\end{rem}

\begin{rem}
We could state a version of Theorem \ref{t.openanddense} replacing the logarithm of the center derivative $J_f^{\mrc}$ by any
continuous map $\varphi\colon M\to \mathbb{R}$ and picking any value $t$ satisfying
\[
\inf\left\{ \int \varphi\, d\mu\,: \, \mu \in \cM_f(M)\right\}<\, t\, <
\sup\left\{ \int \varphi\, d\mu\,:\, \mu \in \cM_f(M)\right\}.
\]
\end{rem}

\begin{rem}
Our methods imply that
 the sentence ``{\emph{Moreover, the measure $\mu$ can be taken with positive entropy.}}''
 can be added to Theorems 5, 7, and 8   and Proposition 4b in \cite{BDB:}.
Furthermore, the entropy will have locally a uniform lower bound.
 The details are left to the reader.
\end{rem}
\subsection{Previous results on nonhyperbolic ergodic measures}\label{ss.history}
By \cite{CCGWY} nonhyperbolic homoclinic classes of $C^1$-generic diffeomorphisms always support nonhyperbolic ergodic measures. The proof uses the periodic approximation method and extends \cite{DG}. In some settings the results of \cite{BDG} imply that these measures have full support in the homoclinic class.

Specific examples of open sets of diffeomorphisms of the three torus with nonhyperbolic ergodic measures were first obtained in \cite{KN} using the periodic approximation method. In \cite{BBD:16} there are general results  guaranteeing for an open and dense subset of $C^1$-robustly transitive diffeomorphisms the existence of nonhyperbolic ergodic measures with positive entropy. The latter paper uses the controlled point method.

All results above provide measures with only one zero Lyapunov exponent. In \cite{WZ} yet another adaptation of the method of periodic approximation yields ergodic measures with multiple zero Lyapunov exponents for some $C^1$-generic diffeomorphisms (see also \cite{BBD:14} for results about skew-products).


Some limitations of these previous constructions are already known. By \cite{KL} any measure obtained by the method of periodic approximations has zero entropy. Hence all nonhyperbolic measures defined in \cite{BDG,BZ,CCGWY,DG,GIKN, KN,WZ} have necessarily zero entropy. On the other hand, the measures produced in \cite{BBD:16} cannot have full support for their definition immediately implies that they are supported on a Cantor-like subset of the ambient manifold.

\subsection{Comparison of  constructions of nonhyperbolic ergodic measures}
\label{ss.compar}
So far there are two ways to construct nonhyperbolic ergodic measures: these measures are either ``approximated by periodic measures'' as in \cite{GIKN} or ``generated by a controlled point'' as in \cite{BBD:16,BDB:}.
 Both methods apply to any partially hyperbolic diffeomorphism as in Subsection~\ref{ss.precise} whose domain contains two special subsets:
a centre contracting region
(where the central direction is contracted)
and a
centre expanding  region (where the central direction is expanded). Furthermore, the orbits can travel from one of these regions to the other in a controlled way. For example, both methods work for a diffeomorphism with a transitive set (which is persistent to perturbations
if one wants to obtain a robust result)  containing two ``heteroclinically related blenders'' of different
indices and a region where the dynamics is partially hyperbolic with one dimensional centre. We discuss blenders in Section \ref{s.robustly}.

Let us briefly describe these two approaches. For simplicity, we will restrict ourselves to the partially hyperbolic setting as above. 

As mentioned above the ``approximation by periodic orbits'' construction from \cite{GIKN}
 builds a sequence of periodic orbits
$\Gamma_i$ and periodic measures $\mu_i$ supported on these orbits such that the sequence of central Lyapunov exponents $\lambda_i$ of these measures tends to zero as $i$ approaches infinity. Since in our partial hyperbolic setting the central Lyapunov exponent vary continuously with the measure, it vanishes for
every  weak$^*$ accumulation point $\mu$ of the sequence of measures $\mu_i$ so we get a nonhyperbolic measure.
The difficulty is to prove that every limit measure is ergodic as ergodicity is not closed property in the weak$^*$ topology.
The arguments in \cite{GIKN} contain a general criterion for the weak$^*$ convergence and ergodicity of the limit of a sequence periodic orbits $\Gamma_i$. It requires that for some summable sequence $(\gamma_i)$ of positive reals most points on the orbit $\Gamma_{i+1}$ shadow $\gamma_i$-close a point on the precedent orbit $\Gamma_i$ for $\card{\Gamma_i}$ iterates. If the proportion of these shadowing points among all points of $\Gamma_{i+1}$ tends to $1$
as $i\to +\infty$ sufficiently fast, then $\mu_i$ weak$^*$ converge to an ergodic measure. The ``non-shadowing'' points are used to decrease the absolute value of the central Lyapunov exponent over $\Gamma_i$ and to spread the support of $\Gamma_i$ in the ambient space.
This forces the limit measure to have zero central Lyapunov exponent and full support, as in this case the central Lyapunov exponent depends continuously on the measure (see below).


It turns out that the repetitive nature of the method of periodic approximation forces the resulting measure to be close (Kakutani equivalent\footnote{Two measure preserving systems are \emph{Kakutani equivalent} if they have a common derivative. A \emph{derivative} of a measure preserving system is an another measure preserving system isomorphic with a system induced by the first one. See Nadkarni's book \cite{Nadkarni}, Chapter 7.})  to a group rotation. Actually, it is proved in \cite{KL} that the periodic measures $\mu_i$ described above converge to $\mu$ in a much stronger sense than weak$^*$ convergence. This new notion of convergence is coined \emph{Feldman--Katok convergence} and it implies that all measures obtained following \cite{GIKN} (thus the measures from \cite{ BDG,  BZ,CCGWY, DG, KN, WZ} obtained by this method) are loosely Kronecker measures with zero entropy. For more details we refer to \cite{KL}.



The authors of \cite{BBD:16} devised a new method for constructing nonhyperbolic ergodic measures
using blenders and flip-flop configurations (we review the former in Section \ref{s.robustly} and the latter in Section~\ref{s.flipflop}). Applying these tools one defines a point $x$ such that the Birkhoff averages of the central derivative along
segments of its forward orbit go to zero uniformly. In a bit more precise terms,
there are sequences of positive reals and positive integers, denoted $\varepsilon_n$ and $T_n$, with $\varepsilon_n\to 0^+$
and $T_n\to \infty$ as $n\to \infty$ such that the average of the central derivative
along a segment of the $x$-orbit
$\{f^t(x), \dots,f^{t+T_n}(x)\}$ is less than $\varepsilon_n$ for any $t\geq 0$. We say that such an $x$ is \emph{controlled at any scale}.
Then the $\omega$-limit set $\omega(x)$ of $x$ is an invariant compact set such that for all measures supported on $\omega(x)$ the centre Lyapunov exponent vanishes,
see  \cite[Lemma 2.2]{BBD:16}. 
Under some mild assumptions one can find a point $x$ such that the compact invariant set $\omega(x)$ is also partially hyperbolic and $f|_{\omega(x)}$ has the full shift over a finite alphabet as a factor, thus it has positive topological entropy. To achieve this one finds a pair of disjoint compact subsets $K_0$ and $K_1$ of $M$ such that for some $k>0$ and $\omega\in\{0,1\}^\ZZ$ which is generic for the Bernoulli measure $\xi_{1/2}$ for every $j\in\ZZ$ we have $f^{jk}(x)\in K_{\omega(j)}$. By the variational principle for topological entropy \cite{Wal:82} the set $\omega(x)$
supports an ergodic nonhyperbolic measure with positive entropy.
Unfortunately, the existence of a semi-conjugacy from $\omega(x)$ to a Cantor set carrying the full shift forces $\omega(x)$ to be a proper subset of $M$, thus these measures cannot be supported on the whole manifold.



In  \cite{BDB:} the procedure from \cite{BBD:16} was modified.
More precisely, in \cite{BDB:} the control over orbit of a point $x$ is relaxed:
one splits the orbit of $x$ into a ``regular part'' and a ``tail'', and
one only needs  to control the averages over the orbit segment $\{f^t(x), \dots,f^{t+T_n}(x)\}$ of length $T_n$ only for $t$ belonging to the regular part, which is a set of positive density in $\N$ and at the same time the iterates corresponding to the tail part are dense in $X$.
Under quite restrictive conditions on the tail (coined \emph{longness} and  \emph{sparseness}),  Theorem~1 from \cite{BDB:} claims that if $x$ is
controlled at any scale with a long sparse tail then any  measure $\nu$ generated\footnote{A measure $\mu$ is generated by a point $x$ if $\mu$ is a weak$^*$ limit point of the sequence of measures $\frac1n\sum_{i=0}^{n-1}\delta_{f^i(x)}$,
 where $\delta_{f^i(x)}$ is the Dirac measure at the point $f^i(x)\in X$.} by $x$ has vanishing central Lyapunov exponent and full support.
The underlying topological mechanism  providing  points whose orbits are controlled at any scale with long sparse tail are
the \emph{flip-flop families with sojourns in $X$}.

Since there is no longer a semi-conjugacy to a full shift a different method has to be applied to establish positivity of the entropy.

\subsection{Control of entropy}\label{ss.control}
In this paper we combine the two methods above. We pick a pair of disjoint compact subsets $K_0$ and $K_1$ of $M$ and we divide the orbit into the regular and tail part as in \cite{BDB:}. We assume that the controlled points visits $K_0$ and $K_1$ following the same pattern as some point generic for the Bernoulli measure $\xi$ (as in \cite{BBD:16}), but we require that this happens \emph{only} for iteration in the regular part of the orbit. We also assume that the tail is even more structured: apart of being long and sparse, the tail intersected with nonnegative integers is a \emph{rational subset of $\NN$}. A set $A\subset\NN$ is \emph{rational} if it can be approximated with arbitrary precision by sets which
are finite union of arithmetic progressions (sets of the form $a+b\NN$, where $a,b\in\NN$, with $b\neq 0$). Here, the ``precision'' is measured in terms of the upper asymptotic density $\bar{d}$ of the symmetric difference of $A$ and a finite union of arithmetic progressions.


To get positive entropy, we show that
the measure $\mu$  is an extension of a loosely Bernoulli system with positive entropy
(a measure preserving system with a subset of positive measure such that the induced system is a Bernoulli process).

More precisely, we have the following general criterion for the positivity of any measure generated by a point (actually, we prove even more general Theorem \ref{thm:mainbis}, but for the full statement we need more notation, see Section \ref{s.measurespositive}).

\begin{theorem*}[Control of entropy]\label{thm:main}
Let $(X, \rho) $ be a compact metric space and $f\colon X\to X$ be a continuous map.
Assume that $\mathbf{K}=(K_0,K_1)$ is a pair of disjoint compact subsets of $X$ and $J\subset \N$ is
a rational.
If $\bar x\in X$ is such that $f^j(\bar x)\in K_{\bar z(j)}$ for every $j\in J$ where $z$ is a generic point for the Bernoulli measure $\xi_{1/2}$, then the entropy of
any measure $\mu$ generated by $x$ has entropy at least $d(J)\cdot \log 2$,
where $d(J)$ stands for the asymptotic density of $J$.
\end{theorem*}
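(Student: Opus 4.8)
The plan is to build a symbolic factor of $\mu$ supported on $\{0,1\}^{\Z}$, not by a topological semi-conjugacy (which would force a proper subset, as in \cite{BBD:16}), but measure-theoretically, using only the iterates indexed by $J$, and then to apply the Abramov formula for the entropy of an induced system together with the fact that a rational set $J$ gives rise — via its natural ordering — to an induced transformation whose base is captured by the Bernoulli pattern. Concretely, let $\mu$ be generated by $\bar x$ along a subsequence $n_k$, so $\mu = \lim_k \frac1{n_k}\sum_{i=0}^{n_k-1}\delta_{f^i(\bar x)}$, and fix the Borel set $A = \{y \in X : y \in K_0 \cup K_1\}$ together with the measurable map $\pi\colon A \to \{0,1\}$, $\pi(y)=j$ iff $y\in K_j$ (well defined since $K_0\cap K_1=\emptyset$, and $A$ is closed). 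The key point is that for $j\in J$ we have $f^j(\bar x)\in A$ and $\pi(f^j(\bar x)) = z(j)$, the $j$-th symbol of a $\xi_{1/2}$-generic point. I would first show $\mu(A)\ge d(J)$ — more precisely, that the frequency with which the orbit of $\bar x$ lies in $A$ is at least $d(J)$ — using rationality of $J$: a rational set has a well-defined density $d(J)$, and the orbit hits $A$ at least along $J$, so any generated measure assigns $A$ mass $\ge \bar d(J)=d(J)$.

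Next I would pass to the induced system $(A, \mu_A, f_A)$ where $f_A$ is the first-return map and $\mu_A = \mu(\cdot\,|\,A)$, and build a measurable factor map $\Pi\colon A \to \{0,1\}^{\Z}$ by $\Pi(y) = (\pi(f_A^n(y)))_{n\in\Z}$. I would argue that $\Pi_*\mu_A$ is the Bernoulli measure $\xi_{1/2}$ on $\{0,1\}^\Z$. This is where rationality of $J$ does the real work: because $J$ is approximable in density by finite unions of arithmetic progressions, the subsequence of return times of the orbit of $\bar x$ to $A$ "lines up" with $J$ in the sense that the ordered sequence of values $\pi$ takes along successive returns is, in the limit, distributed exactly as the symbol sequence $z$ restricted to $J$ — and $z$ restricted to any rational (hence any arithmetic-progression-like) subset, $z$ being $\xi_{1/2}$-generic, is again $\xi_{1/2}$-generic by a standard ergodic-theoretic argument (the Bernoulli shift is mixing of all orders, so sampling along a positive-density "regular" set preserves genericity). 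Thus the empirical distribution of finite $\Pi$-blocks converges to the product measure, giving $\Pi_*\mu_A = \xi_{1/2}$. Consequently the induced system has a Bernoulli factor of entropy $\log 2$, so $h_{\mu_A}(f_A) \ge \log 2$.

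Finally I would invoke Abramov's formula: $h_\mu(f) = \mu(A)\cdot h_{\mu_A}(f_A) \ge d(J)\cdot \log 2$, which is the claim. (Strictly, Abramov requires ergodicity of $\mu$ or a decomposition into ergodic components; if $\mu$ generated by $\bar x$ is not a priori ergodic, one applies the inequality to an ergodic component of positive $A$-mass, or uses the version of Abramov's theorem valid for non-ergodic measures, noting that entropy and the return-time weighting are both affine over the ergodic decomposition.)

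The main obstacle I expect is the precise statement and proof that $\Pi_*\mu_A = \xi_{1/2}$, i.e. that the limiting empirical block-distribution of $\pi$ along successive returns to $A$ is the i.i.d.\ $1/2$--$1/2$ measure. Two things must be controlled simultaneously: (i) the orbit may enter $A$ at times \emph{outside} $J$ as well — those extra visits must be shown either to have negligible density or to still carry the correct ($\xi_{1/2}$) statistics, and rationality of $J$ is exactly what lets one compare the return-time sequence to $J$ up to a set of density $0$; and (ii) even restricting to $j\in J$, one needs that reading $z$ along the increasing enumeration of a rational set yields a $\xi_{1/2}$-generic sequence, which requires a genericity-transfer lemma for Bernoulli measures sampled along rational sets (this is presumably one of the preparatory lemmas in Section \ref{s.measurespositive}, e.g. built on the characterization of rational sets via arithmetic progressions and the fact that $\xi_{1/2}$ is invariant and mixing under all the shifts $n\mapsto a+bn$). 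Once this block-distribution convergence is in hand, the entropy lower bound is a clean application of the variational principle / affinity of entropy and Abramov's theorem.
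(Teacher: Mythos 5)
Your overall scaffolding---code the orbit by membership in $K_0$ or $K_1$, induce on a set of measure roughly $d(J)$, show the induced map has a Bernoulli factor of entropy $\log 2$, then apply Abramov---is the right shape, and this is indeed what the paper does. But there is a genuine gap at exactly the point you flag in item (i) and do not resolve: you propose to induce on $A=K_0\cup K_1$ inside $X$, and you need the ordered sequence of symbols along successive returns of $\bar x$ to $A$ to be $\xi_{1/2}$-generic. Nothing in the hypotheses controls the visits $f^j(\bar x)\in A$ for $j\notin J$. The orbit may enter $A$ at arbitrary times outside $J$ and always land in $K_0$ there, say; this would make $\Pi_*\mu_A$ heavily biased toward $0$, and it would also make $\mu(A)$ strictly larger than $d(J)$ while lowering $h_{\mu_A}(f_A)$, so neither factor in Abramov's product is under control. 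Rationality of $J$ gives no information about the frequency or statistics of those extra visits; it only tells you that $J$ itself is density-approximable by arithmetic progressions. So as stated, the bound $h_{\mu_A}(f_A)\ge\log 2$ does not follow, and the proof does not close.

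The paper's fix is to induce not on "orbit in $A$" but on "orbit in $A$ \emph{and} time in $J$," and the mechanism for tracking the time-coordinate is a product system. Concretely, one passes to the symbolic coding $y=\iota_\mathcal{P}(\bar x)\in\Omega_3$ (partition $K_0,K_1, X\setminus(K_0\cup K_1)$), pairs it with the characteristic sequence $\chi_J\in\Omega_2$, and works on $\Omega_3\times\Omega_2$ with the shift $S=\sigma\times\sigma$. A one-block factor map $\psi$ overwrites the $\Omega_3$-coordinate by the dummy symbol $2$ at all times where $\chi_J=0$; this erases exactly the uncontrolled information. After applying $\psi$, the point $(y,\chi_J)$ is indistinguishable from $(\bar z,\chi_J)$ with $\bar z$ a $\xi_{1/2}$-generic point, and the disjointness theorem (the only joining of $\xi_{1/2}$ with a zero-entropy measure is the product) identifies the generated measure as $\psi_*(\xi_{1/2}\times\mu_J)$. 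One then induces on the cylinder $E=([0]_0\cup[1]_0)\times[1]_0$---which has measure between $\underline d(J)$ and $\bar d(J)$ and corresponds precisely to the times in $J$---and there the first-return itinerary is genuinely Bernoulli, so Abramov applies cleanly. The disjointness theorem here is what plays the role of your "genericity-transfer lemma" (Kamae--Weiss); the paper in fact proves the more general statement for completely deterministic $J$ and deduces the rational case (Lemma~\ref{lem:genericity}, Corollary~\ref{c:main}). If you want to salvage your argument while staying closer to $X$, the minimal repair is to replace $(X,f,\bar x)$ by the product $(X\times\Omega_2,\, f\times\sigma,\, (\bar x,\chi_J))$ and induce on $(K_0\cup K_1)\times[1]_0$; then the extra visits disappear by construction and the rest of your outline (block-distribution convergence via Kamae--Weiss, then Abramov) goes through. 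Without that product structure or some equivalent device, the step "$\Pi_*\mu_A=\xi_{1/2}$" is simply false in general.
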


We apply the above criterion to the controlled point and the rational set $J$ which is the complement (in $\NN$) of the intersection of the rational tail with $\N$. This implies that $J$ is also a rational set (Remark~\ref{r.complement}) and the asymptotic density of $J$ exists and satisfies $0<d(J)<1$.

The underlying topological mechanism we use to find the controlled point with the required behavior
is provided by a new object we call
 the
\emph{double flip-flop family for $f^N$ with $f$-sojourns in $X$}, where $N>0$ is some integer.
This mechanism is a variation of the notion of the flip-flop family with sojourns in $X$, indeed
we will see  that a flip-flop family with sojourns in $X$  yields
a double flip-flop family for some $f^N$ with $f$-sojourns in $X$, see Proposition~\ref{p.yieldsdouble}.




 \subsection*{Organisation of the paper}

This paper is organised as follows. In Section~\ref{s.measurespositive}, we prove Theorem~\ref{thm:main}.
Section~\ref{s.scalesandtails} is devoted to the construction of rational sparsely long tails. In Section~\ref{s.flipflop}, we
study different types of flip-flop families and prove that they generate ergodic measures  with full support and positive entropy
with appropriate averages.
Finally, in Section~\ref{s.robustly}
devoted to robustly transitive diffeomorphisms, we complete the proofs of Theorems~\ref{t.openanddense} and \ref{t.average}.





\section{Measures with positive entropy. Proof of Theorem~\ref{thm:main}}
\label{s.measurespositive}
The goal of this section is to prove Theorem~\ref{thm:main}.
Throughout what follows, $X$ is a compact metric space, $\rho$ is a metric for $X$, and $f\colon X\to X$ is a continuous map (not necessarily a homeomorphism).  First we introduce some notation.

Given a continuous function $\varphi\colon X\to \RR$, $n>0$, and $x\in X$ we denote by $\varphi_n(x)$
the Birkhoff average of $\varphi$ over the orbit segment $x,f(x),\ldots,f^{n-1}(x)$, that is,
 \begin{equation}\label{e.averages}
 \phi_n(x)\eqdef  \frac 1n\sum_{i=0}^{n-1}\varphi\circ f^i(x). 
 \end{equation}

\subsection*{Generated measures, generic/ergodic points} Let $\M_f(X)$ denote the set of $f$-invariant Borel probability measures on $X$. Given $x\in X$ and $n\ge 1$ we set
\begin{equation}\label{e.empiric}
 \mu_n(x,f)\eqdef \frac1n\sum_{i=0}^{n-1}\delta_{f^i(x)},
 \end{equation}
 where $\delta_{f^i(x)}$ is the Dirac measure at the point $f^i(x)\in X$.
We say that a point $x\in X$ {\emph{generates $\mu\in\M_f(X)$} along $(n_k)_{k\in\N}$} if $(n_k)_{k\in\N}$ is a strictly increasing sequence of integers such that $\lim_{k\to\infty}\mu_{n_k}(x,f)=\mu$ in the weak$^*$ topology on the space of probability measures on $X$. If $\mu$ is a measure generated by a point $x\in X$, then $\mu$ is invariant, that is $\mu\in\cM_f(X)$. If there is no need to specify $(n_k)$ we just say that \emph{$x$ generates $\mu$}. We write
$V(x)$ for the set of all $f$-invariant measures generated by $x$.
We say that $x$ is a \emph{generic point} for $\mu$ if $\mu$ is the unique measure generated by $x$,
i.e., $V(x)=\{\mu\}$.

A point is an \emph{ergodic point} if it is a generic point for an ergodic measure.
We write $h_\mu(f)$ for
the \emph{entropy} of $f$ with respect to $\mu$, see  \cite{Wal:82} for its
definition and basic properties.

\subsection*{Sets and their densities}
The \emph{upper asymptotic density} of a set $J\subset \Z$ ($J\subset\N$) is the number
\[
\dbar(J)=\limsup_{N\to\infty}\frac{1}{N}\card{J\cap [0,N-1]}.
\]
Similarly, we define the \emph{lower asymptotic density} $\dund(J)$ of $J$. If $\dbar(J)=\dund(J)$, then we say that $J$ has \emph{asymptotic density} $d(J)=\dund(J)=\dbar (J)$. Note that $\dbar$, $\dund$, and $d$ (if defined) are determined only by $J\cap\N$. 

\subsection*{Symbolic dynamics} Let $\Omega_M=\{0,1,\ldots,M-1\}^\Z$ be the full shift over $\cA=\{0,1,\ldots,M-1\}$.
For $\alpha\in\cA$ we write $[\alpha]_0$ for the \emph{cylinder set} defined as $\{\omega\in\Omega_M:\omega_0=\alpha\}$. Similarly, given a word $u=\alpha_1\dots \alpha_k\in\cA^k$ the set
$[u]_0$ is the cylinder defined as $\{\omega\in\Omega_M:\omega_0=\alpha_1, \dots,
\omega_{k-1}=\alpha_k\}$. We will often identify a set $A\subset\Z$ ($A\subset\N$) with its characteristic function $\chi_A\in\Omega_2$,
that is $(\chi_A)_i=1$ if and only if $i\in A$. By $\sigma$ we denote the shift homeomorphism on $\Omega_M$ given by $(\sigma(\omega))_i=\omega_{i+1}$ for each $i\in \Z$.
For more details on symbolic dynamics we refer the reader to \cite{LM}.

\subsection*{Completely deterministic sequences}
A point $x\in\Omega_2$ is \emph{completely deterministic} (or \emph{deterministic} for short) if every measure generated by $x$ has zero entropy, that is, $h(\mu)=0$ for every $\mu\in V(x)$. A set $J\subset \Z$ is \emph{completely deterministic} if its characteristic function is a completely deterministic point in $\Omega_2$. This notion is due to B.~Weiss, see \cite{Weiss}.

\subsection*{Bernoulli measure} The {\emph{Bernoulli measure $\xi_{1/2}$}} is the shift invariant measure on $\Omega_2$ such that for each $N\in\mathbb{N}$ and $u=u_1\ldots u_N\in\{0,1\}^N$ we have $\xi_{1/2}([u]_0)=1/2^N$.


\subsection*{Itineraries}
Let $\mathbf{K}=(K_0,K_1)$
be disjoint compact subsets of $X$ and $J\subset\Z$.
We say that $\omega\in\Omega_2$ is the $\mathbf{K}$-itinerary of $x\in X$ over $J$ if $f^{j}(x)\in K_{\omega(j)}$ for each $j\in J$.

The next result is the main step in the proof of Theorem~\ref{thm:main}.

\begin{thm}[Control of the entropy]\label{thm:mainbis}
Let $(X, \rho) $ be a compact metric space and $f\colon X\to X$ be a continuous map.
Assume that $\mathbf{K}=(K_0,K_1)$ is a pair of disjoint compact subsets of $X$ and $J\subset \N$ is
completely deterministic with $\dund(J)>0$.
If the $\mathbf{K}$-itinerary of $\bar x\in X$ over $J$ is a generic point for $\xi_{1/2}$, then the entropy of
any measure $\mu\in V(x)$ satisfies
$$
h _{\mu}(f)\ge \dund(J)\cdot \log 2>0.
$$
\end{thm}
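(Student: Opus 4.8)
The plan is to compare the measure $\mu\in V(x)$ with an auxiliary measure on the product system $X\times\Omega_2$ that encodes simultaneously the dynamics of $x$ and its $\mathbf{K}$-itinerary along $J$, and then to use the Abramov/Kac-type formula for entropy under an induced map (return-time map) to extract the contribution $\dund(J)\cdot\log 2$.

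First I would fix a strictly increasing sequence $(n_k)$ along which $\mu_{n_k}(\bar x,f)\to\mu$ weak$^*$. Using $J$ and the fixed genericity of the itinerary $\omega$ (the $\mathbf{K}$-itinerary of $\bar x$), I would build a point $(\bar x,\omega')$ in $X\times\Omega_2$, where $\omega'\in\Omega_2$ is a modification of $\omega$ that agrees with $\omega$ on $J$ and is, say, constantly $0$ off $J$ (so that $\omega'$ records the itinerary data only where it is known). Passing to a subsequence of $(n_k)$, the empirical measures $\frac1n\sum_{i<n}\delta_{(f^i(\bar x),\sigma^i\omega')}$ converge to some $(f\times\sigma)$-invariant measure $\hat\mu$ on $X\times\Omega_2$ whose first marginal is $\mu$. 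Since $K_0,K_1$ are disjoint compact sets, the condition $f^j(\bar x)\in K_{\omega(j)}$ for $j\in J$ passes to the limit in the form: $\hat\mu$ is supported on the closed set $\{(y,\eta): \eta_0=1\Rightarrow y\in K_1,\ \eta_0\in\{0,1\}\}$ intersected appropriately; more precisely one gets a coupling that respects the constraint ``on the $J$-coordinates the $\Omega_2$-part is the genuine $\mathbf{K}$-itinerary.'' By the general entropy inequality $h_{\hat\mu}(f\times\sigma)\le h_\mu(f)+h_{\pi_*\hat\mu}(\sigma)$ is the wrong direction; instead I would use $h_\mu(f)=h_{\pi_*^X\hat\mu}(f)\ge h_{\hat\mu}(f\times\sigma)-h_{\text{(fiber)}}$, but the cleaner route is: the factor map $\pi^X$ is finite-to-one in the relevant sense? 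No — so I would instead argue directly that $h_\mu(f)\ge h_{\hat\mu}(f\times\sigma)$ because the $\Omega_2$-coordinate is (on a set of full $\hat\mu$-measure) a measurable function of the $X$-orbit — indeed off $J$ it is constant and on $J$ it is determined by which of $K_0,K_1$ contains $f^j(\bar x)$, a Borel function of the point; hence $X\times\Omega_2$ with $\hat\mu$ is a measure-theoretic factor of $(X,\mu,f)$ via the orbit, giving $h_{\hat\mu}(f\times\sigma)\le h_\mu(f)$.

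The heart of the argument is then to bound $h_{\hat\mu}(f\times\sigma)$ from below by $\dund(J)\cdot\log2$. Project $\hat\mu$ to its second marginal $\lambda:=\pi^{\Omega_2}_*\hat\mu$ on $\Omega_2$; then $h_{\hat\mu}(f\times\sigma)\ge h_\lambda(\sigma)$. The point $\omega'$ generates $\lambda$ (along the chosen subsequence), and $\omega'$ agrees with the $\xi_{1/2}$-generic $\omega$ on $J$ and is $0$ off $J$, while $J$ is completely deterministic with $\dund(J)>0$. Now I would induce $\sigma$ on the cylinder corresponding to ``being in $J$'': more carefully, consider the return-time structure to the set $\{i: i\in J\}$, which has lower density $\dund(J)>0$, so its measure under the frequency-measure generated by $\chi_J$ is $\ge\dund(J)>0$; the complete determinism of $J$ means the measure $\chi_J$ generates on $\Omega_2$ has zero entropy. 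The induced system on the $J$-positions, read through the $\Omega_2$-letters, recovers exactly the $\xi_{1/2}$-process (this is where genericity of $\omega$ for $\xi_{1/2}$ restricted along $J$ is used), which has entropy $\log 2$. By the Abramov formula $h_\lambda(\sigma)=(\text{return density})\cdot h_{\text{induced}}$ combined with the Rokhlin/Abramov additivity $h(\text{skew over a zero-entropy base})$, the determinism of the ``skeleton'' $\chi_J$ contributes nothing, and one is left with $h_\lambda(\sigma)\ge \dund(J)\cdot\log2$.

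The main obstacle I anticipate is the bookkeeping in the last step: making precise the decomposition of the $\Omega_2$-process generated by $\omega'$ into a ``deterministic skeleton'' $\chi_J$ and a ``random payload'' that along $J$ looks like $\xi_{1/2}$, and then justifying the Abramov-type identity in the non-uniquely-ergodic, merely-$V(x)$ setting where one only controls things along a subsequence $(n_k)$ and only $\dund(J)$ (not $d(J)$) is available. The inequality with $\dund$ rather than $d$ is exactly what makes the subsequence argument go through: along any subsequence the frequency of $J$ is at least $\dund(J)$ in the limit, so the return-time factor in Abramov is bounded below by $\dund(J)$, while complete determinism of $J$ holds along every subsequence, killing the skeleton's contribution regardless. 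I would also need the elementary fact (Remark in the paper) that this entropy bound then transfers back to $\mu$ itself, which is immediate from the factor inequality established in the second paragraph.
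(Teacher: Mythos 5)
Your overall architecture --- lift to a product, use complete determinism of $J$ to neutralize the skeleton's entropy, recognize the along-$J$ process as $\xi_{1/2}$, and invoke Abramov on an induced system --- is the paper's. Two links in your chain, however, have genuine gaps, and they are related.

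The inequality $h_{\hat\mu}(f\times\sigma)\le h_\mu(f)$ is not established by asserting that the $\Omega_2$-coordinate is a measurable function of the $X$-coordinate. Along the orbit of $\bar x$ the symbol $\omega'_j$ is determined by the time index $j$ (through $\chi_J$) together with $f^j(\bar x)$; but the time index is precisely what a Borel function of the point $y\in X$ alone does not know, and nothing prevents $\hat\mu$ from concentrating on pairs $(y,\eta)$ in which $\eta_0$ is not determined by $y$. Indeed the second marginal of $\hat\mu$ is generated by $\omega'$, which in general has positive entropy (on the order of $\dund(J)\log 2$, by the very mechanism the theorem asserts), so a ``no extra entropy'' claim for the $\Omega_2$-factor is nontrivial --- and proving it would already be most of the theorem. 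The paper avoids this entirely by coupling with $\chi_J$ rather than with $\omega'$: the added coordinate is then a zero-entropy process, so the joining entropy inequality $h_\mu\le h_{\mu'}\le h_\mu+h_{\mu_J}=h_\mu$ gives equality for free. With $\omega'$ in the second slot that route is unavailable.

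Relatedly, your Abramov step wants to induce on ``the $J$-positions,'' but the marginal $\lambda$ on $\Omega_2$ cannot locate $J$: the symbol $0$ in $\omega'$ is ambiguous between ``$j\notin J$'' and ``$j\in J$ with itinerary letter $0$.'' Without $\chi_J$ present in the coupling there is no invariant set of measure $\dund(J)$ to induce on. The paper's repair is to code $X$ into $\Omega_3$ (a third letter marks the complement of $K_0\cup K_1$; the boundaries of the $K_i$ are first made null for the limit measure), couple with $\chi_J$, and apply a one-block factor $\psi$ that replaces the $\Omega_3$-letter by $2$ whenever the $\chi_J$-letter is $0$. The pushed-forward measure $\mu''=\psi_*(\xi_{1/2}\times\mu_J)$ --- identified via the disjointness of Bernoulli and zero-entropy joinings --- then makes the $\chi_J=1$ cylinder $E$ the induced set, with $\mu''(E)\ge\dund(J)$ and $(E,\mu''_E,S_E)$ factoring onto $(\Omega_2,\xi_{1/2},\sigma)$. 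So: keep $\chi_J$ as the auxiliary coordinate, exploit its zero entropy to control the joining, and use it to carve out the induced set; collapse the itinerary information off $J$ by a factor map rather than by building it into the point $\omega'$ from the start.
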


Our proof is based on the following property of completely deterministic sets:
\emph{A sequence formed by symbols chosen from a generic point of a Bernoulli measure $\xi_{1/2}$ along a completely deterministic set with positive density is again a generic point for $\xi_{1/2}$}. This is a result of Kamae and Weiss (see \cite{Weiss}) originally formulated in the language of normal numbers and admissible selection rules.

\begin{rem}
When we were finishing writing this paper, {\L}{\c{a}}cka announced in her PhD thesis \cite{Martha} a version of Theorem \ref{thm:mainbis} with relaxed assumptions on the point $\bar x$ and the sequence $J$. Her proof is based on properties of the $\bar{f}$-pseudometric discussed in \cite{KL}.
\end{rem}

\subsection{Proof of Theorem~\ref{thm:mainbis}} 
Let $K_2=X\setminus (K_0\cup K_1)$. Let $\iota_\mathcal{P}\colon X\to\Omega_3$ be the coding map with respect to the partition $\mathcal{P}=\{K_0,K_1,K_2\}$. In other words, $\iota_\mathcal{P}(x)=y\in\Omega_3$, where
\[
\iota_\mathcal{P}(x)=\begin{cases}
                       i, & \mbox{if $j\in\N$ and $f^j(x)\in K_i$}, \\
                       0, & \mbox{if $j<0$}.
                     \end{cases}
\]
Fix $\nu\in V(\bar x)$. We modify $K_0$ and $K_1$ without changing the $\mathbf{K}$-itinerary of $\bar x$ over $J$ so that the topological boundary of $K_i$ for $i=0,1,2$, denoted $\partial K_i$, is $\nu$-null. To this end, we set
\[
\tilde{c}=\frac{1}{2}\min\{\rho(x_0,x_1):x_0\in K_0,\,x_1\in K_1\}
\]
and for $0<c<\tilde{c}$ we define sets $\partial_c K_i =\{x\in X: \dist (x,K_i)=c\}$ for $i=0,1$. Note that for each $c$ the set $\partial_c K_0\cup\partial_c K_1$ contains (but need not to be equal) the topological boundaries of the sets: $K^c_0$, $K^c_1$, and $K'_2=X\setminus (K^c_0\cup K^c_1)$, where $K^c_i=\{x\in X: \dist (x,K_i)\le c\}$ for $i=0,1$. Consider a family of closed sets $\mathcal{C}=\{\partial_c K_0\cup\partial_c K_1: 0<c<\tilde c\}$.
Since elements of $\mathcal{C}$ are pairwise disjoint, only countably many of them can be of positive $\nu$-measure.
Fix any $0<c<\tilde{c}$ such that $\nu(\partial_c K_0\cup\partial_c K_1)=0$ and replace $K_i$ by $K^c_i$ for $i=0,1$, and $K_2$ by $K'_2$. Note that this does not change the $\mathbf{K}$-itinerary of $x$ over $J$.  Furthermore, the elements of our redefined partition, which we will still denote $\mathcal{P}=\{K_0,K_1,K_2\}$ have $\nu$-null boundaries. Let $(n_k)$ be the sequence of integers along which $\bar x$ generates $\nu$. Then $y\eqdef\iota_\mathcal{P}(\bar x)\in\Omega_3$ generates a shift-invariant measure $\mu$ on $\Omega_3$ which is the push-forward of $\nu$ through the coding map $\iota_\mathcal{P}$. Furthermore, the dynamical entropy of $\nu$ with respect to the partition $\mathcal{P}$ equals $h_\mu(\sigma)$. See \cite[Lemma 2]{KL} for more details. Note that the proof in \cite{KL} is stated for generic points, but it is easily adapted to the measures generated along a sequence as considered here. Now Theorem~\ref{thm:mainbis} follows from the following fact.

\begin{cla}
\label{t.p.entropy}
If $\mu \in \mathcal{M}_\sigma(\Omega_3)$ is a measure
generated by $y$, then
$h_{\mu}(\sigma) \ge \dund(J)\cdot \log 2$.
\end{cla}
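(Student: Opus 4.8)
### Proof plan for Claim~\ref{t.p.entropy}

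The plan is to reduce the entropy estimate for $\mu$ on $\Omega_3$ to the entropy of its ``restriction to the coordinates in $J$'', and then to exploit the Kamae--Weiss selection theorem. First I would fix a measure $\mu\in\cM_\sigma(\Omega_3)$ generated by $y=\iota_{\mathcal P}(\bar x)$ along some subsequence $(n_{k_l})$ of $(n_k)$ (pass to a further subsequence so that the limit exists). The point $y$ has the property that $y(j)\in\{0,1\}$ for every $j\in J$ and, moreover, the sequence $(y(j))_{j\in J}$, read off in increasing order of $j$, coincides with the generic point $z$ for $\xi_{1/2}$ that is the $\mathbf{K}$-itinerary of $\bar x$ over $J$.

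The heart of the argument is the Kamae--Weiss theorem quoted right before the claim: \emph{selecting a subsequence of a $\xi_{1/2}$-generic point along a completely deterministic set of positive lower density again yields a $\xi_{1/2}$-generic point.} Here $J$ is completely deterministic with $\dund(J)>0$ by hypothesis. I would apply this in the following form. Let $\pi\colon\Omega_3\to\Omega_2$ collapse the symbol $2$ to $0$ (so $\pi$ records only ``is the coordinate in $K_1$ or not''), and let $w=\pi(y)\in\Omega_2$. The positions of $J$ carry, inside $w$, exactly the generic-for-$\xi_{1/2}$ sequence $z$; the Kamae--Weiss result then says that this does not merely hold for $z$ abstractly but controls the empirical distribution of blocks of $w$ along $J$. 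Concretely: for any word length $m$, the frequency with which $\sigma^j w$ starts with a prescribed $u\in\{0,1\}^m$, averaged over $j\in J\cap[0,N-1]$ and normalized by $|J\cap[0,N-1]|$, tends to $2^{-m}$. This is the precise statement needed — it says that the ``sub-process obtained by observing $w$ only at times in $J$'' is, in the limit, the i.i.d.\ fair-coin process.

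From this I would extract the entropy bound by a standard covering/counting argument. Fix $m$ and let $N=n_{k_l}$. Count the number of distinct blocks of length $N$ that appear in $w$ near $0$; equivalently, estimate $\frac1N\log$ of the number of $m$-block patterns along $[0,N-1]$ weighted by $\mu$. Partition $[0,N-1]$ into the positions in $J$ and those outside. The coordinates outside $J$ contribute at most $\log 3$ per symbol, but these are ``free''; the coordinates in $J$, however, are forced to reproduce a $\xi_{1/2}$-generic sequence, so the number of realizable patterns on $J\cap[0,N-1]$ grows like $2^{|J\cap[0,N-1]|}$, up to subexponential error, by the block-frequency statement of the previous paragraph together with the Shannon--McMillan--Breiman / asymptotic equipartition property for the Bernoulli measure $\xi_{1/2}$. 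Since $\frac1N|J\cap[0,N-1]|\to$ a number $\ge\dund(J)$ along a suitable subsequence (or, more carefully, $\liminf\ge\dund(J)$), this yields $h_\mu(\sigma)\ge\dund(J)\cdot\log 2$ after letting $N\to\infty$ and then $m\to\infty$. To make the argument clean I would work with the information-theoretic formulation: $h_\mu(\sigma)=\lim_m \frac1m H_\mu(\mathcal{Q}_0^{m-1})$ where $\mathcal Q$ is the state partition of $\Omega_3$, bound $H_\mu$ from below by conditioning, and use that conditioned on the (deterministic, hence zero-entropy) data of which coordinates lie in $J$ and what the non-$J$ coordinates are, the remaining randomness is the fair coin flips on $J$, which have entropy density $\dund(J)\log 2$.

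The main obstacle I expect is the bookkeeping in the last step: passing rigorously from the pointwise/generic statement produced by Kamae--Weiss to a genuine lower bound on the Kolmogorov--Sinai entropy $h_\mu(\sigma)$ of the generated measure $\mu$. One must be careful that $\mu$ is merely generated along a subsequence (not a generic point), so the usual identification of $h_\mu$ with block growth rates along orbit segments of $\bar x$ needs the version of \cite[Lemma 2]{KL} adapted to subsequences, exactly as flagged in the excerpt; and one must ensure the ``completely deterministic, positive density'' hypotheses on $J$ feed correctly into the Kamae--Weiss selection rule so that the selected subsequence of $z$ is not just generic but has controlled block frequencies at the finite scales used in the counting. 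Once these measure-theoretic compatibilities are in place, the inequality $h_\mu(\sigma)\ge\dund(J)\cdot\log 2$ follows, and since $\dund(J)>0$ this also gives the strict positivity $h_\mu(\sigma)>0$, completing the proof of the claim and hence of Theorem~\ref{thm:mainbis}.
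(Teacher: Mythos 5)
Your intuition is right and you correctly single out the Kamae--Weiss selection principle and the ``deterministic data plus fair coin on $J$'' decomposition, but the counting/conditional-entropy route you sketch has a genuine gap that your own final paragraph half-acknowledges but does not resolve. The problem is that ``observing $w$ at times in $J$'' is not a stationary operation: $J$ is a fixed subset of $\mathbb{N}$, whereas $h_\mu(\sigma)$ is computed from the distribution of $m$-blocks of $y$ as the starting position slides. Your restatement of Kamae--Weiss (frequency with which $\sigma^j w$ begins with $u$, averaged over $j\in J$) is not what the theorem gives: it concerns block frequencies of the \emph{selected} sequence $(w(j_1),w(j_2),\dots)$ read along the increasing enumeration of $J$, which is not the same as reading consecutive $m$-blocks of $w$ at positions $j\in J$. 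Consequently the step ``the number of realizable patterns on $J\cap[0,N-1]$ grows like $2^{|J\cap[0,N-1]|}$'' does not follow; $y$ is a single sequence and $\mu$ is a weak-$*$ limit along a subsequence, so there is no direct route from the pointwise genericity of the selected subsequence to a lower bound on $H_\mu(\mathcal Q_0^{m-1})$ without first making the selection compatible with the shift.

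The paper fills exactly this gap by passing to the product system $S=\sigma\times\sigma$ on $\Omega_3\times\Omega_2$ and considering the orbit of $(y,\chi_J)$. Now the positions of $J$ \emph{travel with the window}, which turns ``observe the first coordinate on $J$'' into a genuine dynamical object: the return map to $E=\{$second coordinate reads $1\}$. The concrete steps are: (a) any joining $\mu'$ of $\mu$ with the zero-entropy measure $\mu_J$ satisfies $h_{\mu'}(S)=h_\mu(\sigma)$ (here the complete determinism of $J$ is used); (b) a $1$-block factor $\psi$ collapsing the non-$J$ first coordinates sends $\mu'$ to $\mu''=\psi_*(\xi_{1/2}\times\mu_J)$, which is where the disjointness of Bernoulli systems from zero-entropy systems does the real work (this, rather than Kamae--Weiss per se, is the engine of the proof, though the two are closely allied); (c) the induced system $(E,\mu''_E,S_E)$ has $(\Omega_2,\xi_{1/2},\sigma)$ as a factor, giving $h_{\mu''_E}(S_E)\ge\log 2$; (d) Abramov's formula then yields $h_{\mu''}(S)=\mu''(E)\,h_{\mu''_E}(S_E)\ge\dund(J)\log 2$. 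If you flesh out your conditional-entropy plan carefully you will find yourself reconstructing (a)--(d); as written, the plan stops just short of the machinery needed to make ``conditioned on the non-$J$ data, the $J$-coordinates are i.i.d.\ fair coins'' a theorem about $\mu$ rather than about the single orbit of $y$.
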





\begin{proof}[Proof of the Claim \ref{t.p.entropy}]
Let $\mu\in V(y)$ and $(n_k)_{k\in \mathbb{N}}$ be a strictly increasing sequence such that $\mu_{n_k}(y,\sigma)\to\mu$ as $k\to\infty$.

Let $\chi_J\in\Omega_2$ be the characteristic sequence of $J$.  Let $\mu_J$ by any measure generated by $\chi_J$ along
$(n_k)_{k\in \mathbb{N}}$, that is, is any limit point of $(\mu_{n_k}(\chi_J,\sigma))_{k\in\N}$. Passing to a subsequence (if necessary) we assume that $\mu_{n_k}(\chi_J,\sigma)$ converges as $k\to\infty$ to a shift-invariant measure $\mu_J$ on $\Omega_2$.
Consider the product dynamical system on $\Omega_3\times \Omega_2$ given by
$$
S \eqdef
\sigma\times \sigma\colon \Omega_3\times \Omega_2\to \Omega_3\times \Omega_2.
$$
Again passing to a subsequence if necessary, we may assume that there exists $\mu'\in\mathcal{M}_{S}(\Omega_3\times\Omega_2)$ such that
\[
\mu_{n_k}((y,\chi_J),S)
\to \mu'\text{ as }k\to\infty.
\]
Recall that a \emph{joining} of $\mu$ and $\mu_J$ is an $S$-invariant measure on $\Omega_3\times \Omega_2$ which projects $\mu$ in the first coordinate and $\mu_J$ in the second. Observe that $\mu'$ is a joining of $\mu$ and $\mu_J$ (because the marginal distributions of $\mu_{n_k}((y,\chi_J),S)$ converge as $k\to\infty$ to, respectively, $\mu$ and $\mu_J$).
As the entropy of a joining is bounded below by the entropy of any of its marginals and is bounded above by the sum of the entropies of its marginals (see \cite[Fact 4.4.3]{Downarowicz}) we have that
\[
h_\mu(\sigma)\le h_{\mu'}(S)\le h_\mu(\sigma)+h_{\mu_J}(\sigma)=h_\mu(\sigma),
\]
where the equality uses that $J$ is completely deterministic. 
Now to complete the proof the Claim \ref{t.p.entropy} it suffices to show the
following fact.

\begin{cla}
\label{p.l.entropy}
 Every  $S$-invariant measure $\mu'\in V(y,\chi_J)$ satisfies $h_{\mu'}(S)\ge \und d(J)\cdot \log 2>0$.
\end{cla}

\begin{proof}[Proof of Claim \ref{p.l.entropy}]
Let $\Psi\colon \{0,1,2\}\times\{0,1\}\to\{0,1,2\}\times\{0,1\}$ be the $1$-block map given by
$\Psi(\alpha,1)=(\alpha,1)$ and $\Psi(\alpha,0)=(2,0)$ for $\alpha\in\{0,1,2\}$.
Consider the factor map $\psi\colon\Omega_3\times\Omega_2\to\Omega_3\times\Omega_2$ determined by $\Psi$, that is
\[
\psi(\omega)=\big((\psi(\omega))_i\big)_{i\in\Z},\qquad\text{where }(\psi(\omega))_i = \Psi(\omega_i) \,\text{for }i\in\Z.
\]
Observe that we have defined $\psi$ so that if $\omega,\omega'\in \Omega_3$ satisfy $\omega|_J=\omega'|_J$, then $(\omega'',\chi_J)\eqdef\psi(\omega,\chi_J)=\psi(\omega',\chi_J)$. Furthermore, $\omega''$ agrees with both, $\omega$ and $\omega'$, over $J$ and $\omega''_j=2$ for all $j\notin J$.
In particular, if $\bar z\in\Omega_2$ is a generic point for the Bernoulli measure $\xi_{1/2}$ such that
$\bar z|_J=
y|_J$,
then  $(z,\chi_J)\eqdef\psi(y,\chi_J)=\psi(\bar z,\chi_J)$.

Recall that the only joining of a Bernoulli measure $\xi_{1/2}$ and the zero entropy measure $\mu_J$ is the product measure $\xi_{1/2}\times\mu_J$, see \cite[Theorem 18.16]{Glasner}.
 As any limit point of $(\mu_{n_k}((\bar z,\chi_J),S))_{k\in\N}$ is a joining of $\xi_{1/2}$ and $\mu_J$, we get that
 $(\bar z,\chi_J)$ generates along $(n_k)_{k\in\N}$ the $S$-invariant measure $\xi_{1/2}\times \mu_J$. It follows that $(z,\chi_J)$ generates  along $(n_k)_{k\in\N}$ the $S$-invariant measure
 $\mu''=\psi_*(\xi_{1/2}\times\mu_J)$.
 Note that this shows that all measures in $\mathcal{M}_S(\Omega_3\times\Omega_2)$ generated by $(y,\chi_J)$ along $(n_k)_{k\in\N}$ are pushed forward by $\psi$ onto $\mu''$.
 Therefore to finish the proof of the Claim \ref{p.l.entropy} it is enough to see that
\begin{equation}
\label{e.toseethat}
h_{\mu''}(S) \ge \dund(J)\cdot \log 2.
\end{equation}
Let $\mathbb{I}_{[1]_0}$ be the characteristic function of the cylinder $[1]_0\subset\Omega_2$.
Note that from the definition of   $\dund(J)$ and $\dbar(J)$ it follows immediately that
\begin{equation}
\label{e.dj}
\dund(J) \le
\lim_{k\to\infty} \frac{1}{n_k}\sum_{j=0}^{n_k-1} \mathbb{I}_{[1]_0}(\sigma^{j}(\chi_J))
\le  \dbar (J).
\end{equation}
Observe also that the measure $\mu''$, by its definition,  is concentrated on the set
\[
\big( [0]_0\times[1]_0 \big) \cup  \big( [1]_0 \times [1]_0 \big) \cup \big( [2]_0\times [0]_0 \big)\subset \Omega_3\times\Omega_2.
\]
Consider  the set $E\eqdef \big([0]_0\times[1]_0\big)\cup\big([1]_0\times [1]_0\big)\subset \Omega_3\times\Omega_2$
and let $\mathbb{I}_E$ be its characteristic function.
\begin{cla}
\label{cl.theclaim} We have
$0<\dund(J)\le\mu''(E)=\mu_J([1]_0)\le\dbar(J)$.
\end{cla}

\begin{proof}[Proof of Claim \ref{cl.theclaim}]
Note that for $n\in\N$ it holds $S^n(z,\chi_J)\in E$ if and only if $\sigma^n(\chi_J)\in [1]_0$, equivalently, if $n\in J$.
Recall that along $(n_k)$, the point $(z,\chi_J)$ generates $\mu''$ and $\chi_J$ generates $\mu_J$.
Furthermore,
as the topological boundaries of $E$ and $[1]_0$ are empty, it follows from the portmanteau theorem \cite[Thm. 18.3.4]{Garling}, that
\[
\mu''(E)=\lim_{k\to\infty}\frac{1}{n_k}\sum_{j=0}^{n_k-1} \mathbb{I}_E(S^{j}(z,\chi_J))
=\lim_{k\to\infty} \frac{1}{n_k}\sum_{j=0}^{n_k-1} \mathbb{I}_{[1]_0}(\sigma^{j}(\chi_J)).
\]
Equation \eqref{e.dj} implies now that
$0<\dund(J)\le\mu''(E)\le\dbar(J)$, proving Claim \ref{cl.theclaim}.
\end{proof}

By Claim \ref{cl.theclaim} we have that $S$ induces a measure preserving system $(E,\mu''_E,S_E)$ on $E$,
where $\mu''_E(A)=\mu''(A\cap E)/\mu''(E)$ for every Borel set $A\subset \Omega_3\times\Omega_2$
and $S_E(x) =S^{r(x)} (x)$, where $r(x)=\inf\{ q>0 \colon S^q(x) \in E\}$ is defined for $\mu''$-a.e. point $x\in E$.

\begin{cla}\label{l.bernoulli}
The measure preserving system $(\Omega_2,\xi_{1/2},\sigma)$ is a factor of $(E,\mu''_E,S_E)$.
\end{cla}

Let us assume that Claim \ref{l.bernoulli} holds and conclude the proof of Claim~\ref{p.l.entropy}.
Note that by Claim \ref{l.bernoulli} we have
$h_{\mu''_E}(S_E)\ge \log 2$. Now by Abramov's formula\footnote{The proof that this well-known formula works for transformations which can be not ergodic nor invertible, is due  H. Scheller, see \cite{Krengel} or \cite[p. 257]{Petersen}.}
it follows that
\[
h_{\mu''_E}(S_E)=h_{\mu''}(S)/\mu''(E).
\]
By Claim~\ref{cl.theclaim}, this yields $\dund(J)\cdot \log 2\le h_{\mu''}(S)$,
proving \eqref{e.toseethat} and finishing the proof of Claim~\ref{p.l.entropy}.
\end{proof}
Since Claim~\ref{p.l.entropy} implies Claim \ref{thm:mainbis}, and the latter implies Theorem \ref{t.p.entropy},
it remains to prove Claim \ref{l.bernoulli}.
\begin{proof}[Proof of Claim~\ref{l.bernoulli}]
 Consider the partition $\cP_E\eqdef \{P_0,P_1\}$ of $E$, where
$P_0\eqdef [0]_0\times[1]_0$ and $P_1\eqdef [1]_0\times [1]_0$. 
Fix $N\in\N$ and  $v=v_1\ldots v_N\in\{0,1\}^N$. Let
\[
\mathcal P_v\eqdef  P_{v_1}\cap S_E^{-1}(P_{v_2})\cap\ldots\cap S_E^{-N+1}(P_{v_N}).
\]
Our goal is to prove that $\mu''_E(\mathcal P_v) = 1/2^N$, which implies that $(\Omega_2,\xi_{1/2},\sigma)$ is a factor $(E,\mu''_E,S_E)$ through the factor map generated by $\cP_E$.
To this end we need some auxiliary notation. Let $\mathcal G_J^N$ be the set of blocks over $\{0,1\}$ which contain exactly $N$ occurrences of $1$, start with $1$, and end with $1$. For $u\in\mathcal G_J^N$ and $1\le j\le N$ we denote by $o(j)$ the position of the $j$-th occurrence of $1$ in $u$
and define
\[
V_{v,u}\eqdef \{(\omega,\bar\omega)\in \supp\mu'': \bar \omega\in [u]_0 \text{ and }\omega_{o(j)}=v_{j}\text { for }j=1,\ldots,N\}.
\]
From the definition of $\mu''$ it follows that
$\mu''(V_{v,u})=(1/2^N)\,\mu_J([u]_0)$. 
Furthermore, we set
\[
U_v\eqdef \bigcup_{u\in \mathcal G_J^N}V_{v,u},
\qquad\text{hence}\qquad
\mu''(U_v)=\frac{1}{2^N}\, \sum_{u\in \mathcal G_J^N}\mu_J([u]_0).
\]
Noting that
 $\mu_J$-almost every point $\bar\omega\in[1]_0$ belongs to some
$[u]_0$ with $u\in\mathcal G_J^N$
we get that
$$
\sum_{u\in \mathcal G_J^N}\mu_J([u]_0)=\mu_J([1]_0).
$$
 Therefore, using Claim~\ref{cl.theclaim},  we get
 $$
 \mu''(U_v)=\frac{\mu_J([1]_0)}{2^N}= \frac{\mu''(E)}{2^N}.
 $$
Note also that $\mathcal P_v=U_v\cap E=U_v$, thus
\[
\mu''_E(\mathcal P_v)=\frac{\mu''(U_v)}{\mu''(E)}= \frac{1}{2^N},
\]
proving Claim \ref{l.bernoulli}.
\end{proof}

The proof of Claim~\ref{t.p.entropy} is now complete. This ends the proof of Theorem \ref{thm:mainbis}.
\end{proof}

\subsection{Rational sets and proof of Theorem~\ref{thm:main}}\label{ss.rational}
The notion of a rational set was introduced by Bergelson and Ruzsa \cite{BR1}.
Below,  by an {\emph{arithmetic progression}} we mean a set of the form $a\Z+b$ for some $a,b\in\N$, $a\neq 0$.
\begin{defn}\label{d.rational}
We say that a set $A\subset\Z$ is \emph{rational} if for every $\eps>0$ there is a set $B\subset\Z$ which is a union of finitely many arithmetic progressions and satisfies $\dbar(A\symdiff B)<\eps$, where $A\symdiff B$ stands for the symmetric difference of $A$ and $B$. A subset $B$ of $\N$ is \emph{rational} if $B=C\cap \N$ for some rational set $C\subset \Z$.
\end{defn}

\begin{rem}\label{r.complement}
 If $A \subset \Z$ is rational then its complement $\Z\setminus A$ is also rational. The same holds for rational subsets of $\N$.
 Note that, by definition, a rational set has a well defined density.
 \end{rem}

Recall here that the formula
\[
\dbar(x,y)\eqdef \limsup_{N\to\infty}\frac{1}{N}
\card{\{0\le n<N:x_n\neq y_n\}},\quad \text{for }x,y\in\Omega_M
\]
defines a pseudometric on $\Omega_M$.
In the following, we need the following properties  of  $\dbar$:
\begin{enumerate}
  \item\label{p1} If $(x_n)_{n\in\N}\subset\Omega_M$ is a sequence of ergodic points and $x\in\Omega_M$ is such that $\dbar(x_n,x)\to 0$ as $n\to \infty$,
then $x$ is also an ergodic point.
  \item\label{p2} Furthermore, if $(x_n)_{n\in\N}\subset\Omega_M$ and $x\in\Omega_M$ are as above and $V(x_n)=\{\mu_n\}$ and $V(x)=\{\mu\}$, then $h_{\mu_n}(\sigma)\to h_\mu(\sigma)$ as $n\to\infty$.
\end{enumerate}
A proof of \eqref{p1} is sketched in \cite{Weiss}, alternatively it follows from \cite[Theorem 15 and Corollary 5]{KLO}.
To see \eqref{p2} one combines \eqref{p1} with
\cite[Theorem I.9.16]{Shields} and the proof of \cite[Theorem I.9.10]{Shields}.
Corollary~\ref{c:main} (and therefore Theorem~\ref{thm:main}) follows from the following result.

\begin{lem}\label{lem:genericity}
If $A\subset \Z$ or $A\subset \N$ is a rational set, then its characteristic function $\chi_A\in\Omega_2$ is a completely deterministic ergodic point. 
\end{lem}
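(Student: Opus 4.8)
The plan is to prove Lemma~\ref{lem:genericity} in two stages: first for a single arithmetic progression, then for finite unions, and finally use the rational approximation together with the closure properties~\eqref{p1} and~\eqref{p2} of the $\dbar$-pseudometric. Throughout I work with $A\subset\Z$; the case $A\subset\N$ follows since the relevant densities and generated measures depend only on $A\cap\N$.

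First I would handle the building block. If $A=a\Z+b$ is a single arithmetic progression, then $\chi_A$ is a periodic point of $\sigma$ with period $a$, so its orbit is finite and the unique measure it generates is the uniform measure on that periodic orbit; this measure is ergodic and has zero entropy. Hence $\chi_A$ is a completely deterministic ergodic point. Next, for a finite union $B=\bigcup_{i=1}^m (a_i\Z+b_i)$, the sequence $\chi_B$ is again periodic (with period dividing $\mathrm{lcm}(a_1,\dots,a_m)$), so by the same reasoning $\chi_B$ is a completely deterministic ergodic point with a well-defined density $d(B)$.

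Now for the general rational set $A$: by Definition~\ref{d.rational} there is a sequence $B_n$ of finite unions of arithmetic progressions with $\dbar(A\symdiff B_n)\to 0$. Observe that $\dbar(\chi_A,\chi_{B_n})=\dbar(A\symdiff B_n)\to 0$, i.e.\ $\chi_{B_n}\to\chi_A$ in the $\dbar$-pseudometric. Each $\chi_{B_n}$ is an ergodic point by the previous paragraph, so property~\eqref{p1} yields that $\chi_A$ is an ergodic point; write $V(\chi_A)=\{\mu\}$ and $V(\chi_{B_n})=\{\mu_n\}$. Then property~\eqref{p2} gives $h_{\mu_n}(\sigma)\to h_\mu(\sigma)$. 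Since each $\chi_{B_n}$ is completely deterministic, $h_{\mu_n}(\sigma)=0$ for all $n$, hence $h_\mu(\sigma)=0$. Therefore the unique measure generated by $\chi_A$ has zero entropy, so $\chi_A$ is completely deterministic, and it is also an ergodic point. This proves the lemma, and via Corollary~\ref{c:main} it yields Theorem~\ref{thm:main} (applying Theorem~\ref{thm:mainbis} to the rational set $J$, which is completely deterministic with $\dund(J)=d(J)>0$).

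The only genuine subtlety is making sure the cited closure properties apply in the form needed: \eqref{p1} and~\eqref{p2} are stated for sequences of \emph{ergodic} points converging in $\dbar$, and the reduction above is specifically arranged so that the approximants $\chi_{B_n}$ are honestly ergodic (being generic points of periodic orbit measures), not merely completely deterministic. If one only knew the $\chi_{B_n}$ were completely deterministic one would need a closure statement for completely deterministic points under $\dbar$-limits; routing through periodic points avoids this and lets us invoke \eqref{p1}--\eqref{p2} verbatim. Everything else is bookkeeping about periodic sequences and the fact, recorded in Remark~\ref{r.complement}, that rational sets possess densities.
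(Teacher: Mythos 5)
Your proof is correct and follows essentially the same route as the paper's: both observe that finite unions of arithmetic progressions have $\sigma$-periodic characteristic sequences (hence are generic points of zero-entropy ergodic periodic-orbit measures), approximate $\chi_A$ by such points in the $\dbar$-pseudometric using the definition of rationality, and then invoke properties~\eqref{p1} and~\eqref{p2}. Your closing remark about why it matters that the approximants are honest ergodic points (rather than merely completely deterministic) is a correct and useful observation, implicit in the paper's wording.
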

\begin{proof}
Note that a set $B\subset \Z$ is a union of finitely many arithmetic progressions  if and only if its characteristic function
$\chi_B\in\Omega_2$ is a periodic point for the shift map $\sigma\colon\Omega_2\to\Omega_2$. Furthermore,  
$\dbar(A\symdiff B)<\eps$
is equivalent to $\dbar(\chi_A,\chi_B)<\eps$.
Thus, by definition, for every rational set $A\subset \Z$ there is a sequence $(z_n)_{n=0}^\infty$ of $\sigma$-periodic points in $\Omega_2$ such that $\dbar(\chi_A,z_n)\to 0$ as $n\to\infty$.
Since each $z_n$ is a generic point for a zero entropy ergodic measure, the properties \eqref{p1}--\eqref{p2} of $\dbar$ mentioned above allow us to finish the proof if $A\subset \Z$. For $A\subset\N$ it is enough to note that if $C\subset \Z$ is such that $\chi_C\in\Omega_2$ is a completely deterministic ergodic point and $A=C\cap\N$, then $\chi_A\in\Omega_2$ is also a completely deterministic ergodic points, as these notions depend only on the forward orbit of a point.
\end{proof}

Using Lemma~\ref{lem:genericity} we see that the characteristic function $\chi_A\in\Omega_2$
of a rational set $A\subset \Z$ is a completely deterministic ergodic point and has a well defined density. Therefore we get the following corollary of Theorem \ref{thm:mainbis}, which explains why the theorem about control of entropy stated in the introduction follows from Theorem \ref{thm:mainbis}.
\begin{coro}\label{c:main}
The conclusion of Theorem~\ref{thm:mainbis} holds if $J$ is a rational set of $\N$ with $0<d(J)<1$.
\end{coro}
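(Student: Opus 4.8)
The plan is to deduce Corollary~\ref{c:main} directly from Theorem~\ref{thm:mainbis} together with Lemma~\ref{lem:genericity}, so the proof is essentially a matter of matching hypotheses. Theorem~\ref{thm:mainbis} is stated for sets $J\subset\N$ that are completely deterministic with $\dund(J)>0$, whereas here we are handed a rational set $J\subset\N$ with $0<d(J)<1$; the only thing to check is that such a $J$ meets both conditions required by Theorem~\ref{thm:mainbis}.

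First I would invoke Lemma~\ref{lem:genericity}: since $J$ is rational, its characteristic function $\chi_J\in\Omega_2$ is a completely deterministic (and ergodic) point. By the definition of a completely deterministic set, this says precisely that $J$ itself is completely deterministic. Next, by Remark~\ref{r.complement} a rational set has a well-defined asymptotic density, so $\dund(J)=\dbar(J)=d(J)$, and the hypothesis $d(J)>0$ then yields $\dund(J)>0$. (The hypothesis $d(J)<1$ is not needed for the argument; it is kept only because it is the regime relevant to the application, where $J$ is the complement in $\N$ of a rational long sparse tail.)

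With both hypotheses of Theorem~\ref{thm:mainbis} verified, I would simply apply that theorem: if $\bar x\in X$ is such that its $\mathbf{K}$-itinerary over $J$ is a generic point for $\xi_{1/2}$, then every $\mu\in V(\bar x)$ satisfies
\[
h_\mu(f)\ \ge\ \dund(J)\cdot\log 2\ =\ d(J)\cdot\log 2\ >\ 0,
\]
which is exactly the conclusion of Corollary~\ref{c:main}. I do not expect any genuine obstacle here: the entire substance has already been absorbed into Theorem~\ref{thm:mainbis} and Lemma~\ref{lem:genericity}, and this last step is purely bookkeeping, reconciling the ``rational set'' hypothesis of the corollary with the ``completely deterministic, positive lower density'' hypothesis of the theorem.
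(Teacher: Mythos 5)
Your proposal is correct and mirrors the paper's own argument: the paper likewise invokes Lemma~\ref{lem:genericity} to conclude that $\chi_J$ is completely deterministic and, via Remark~\ref{r.complement}, that $J$ has a well-defined density equal to $\dund(J)>0$, then simply applies Theorem~\ref{thm:mainbis}. No gap here; the bookkeeping is exactly what the paper does.
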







\section{Rational  long  sparse tails}
\label{s.scalesandtails}

The aim of this section is to find rational subsets of $\N$, which fulfill the requirements needed in the construction of a controlled point from \cite{BDB:}. These sets are coined in \cite{BDB:} \emph{$\cT$-long $\bar\varepsilon$-sparse tails} (associated to a scale $\cT$ and a controlling sequence $\bar\varepsilon$) and are discussed below under the name of \emph{$\cT$-sparsely long tails}. Their elements are times where
the control of the averages of a function is partially lost.  The times in a tail allows us to control and
spread the support of measures generated by a controlled point (here longness of a tail is crucial for guaranteeing the full support), while we retain some control on the averages due to sparseness. To control the entropy we define our tail so that it is also a rational set. Hence its complement, that is, the set of times defining the regular part of the orbit, is also rational. It follows that both sets, the tail and the regular part, have nontrivial and well-defined densities. This allows us to apply the criterion for positivity of the entropy (Theorem \ref{thm:mainbis}) to the measures generated by a controlled point.

We  first recall  from \cite{BDB:} the definitions
of scales and sparsely long tails.

\begin{defn}[Scale]\label{d.scale}
We say that a sequence of positive integers $\cT=(T_n)_{n\in\N}$ is \emph{a scale} if
  there is an integer sequence  $\bar \kappa=(\kappa_n)_{n\in\N}$ of \emph{factors of $\cT$}
  such  that
 \begin{itemize}
  \item $\kappa_0=3$, and $\kappa_{n}$ is a multiple of $3\kappa_{n-1}$ for every $n\ge 1$,
  \item
  $T_0$ is a multiple of $3$ and
   $T_{n}=\kappa_{n} \, T_{n-1}$ for every $n\ge 1$,
  \item
  $\kappa_{n+1}/\kappa_n \to \infty$ as $n\to\infty$.
\end{itemize}
\end{defn}
\begin{rem}
We have that
\[
\sum_{n=0}^\infty\frac{1}{\kappa_n}\le\sum\frac{1}{3^n}<1.
\]  
\end{rem}
\begin{defn}[$\N$-interval]
An \emph{interval of integers} (or $\N$-interval for short) is a set
$[a,b]_{\NN}\eqdef[a,b]\cap \NN$, where $a,b\in \NN$.
\end{defn}
\begin{defn}[Component of a set $\MM\subset \N$]
Given a subset $\MM$ of $\NN$
a \emph{component} of $\MM$ is any maximal $\N$-interval contained in $\MM$, that is,
$[a,b]_{\NN}\subset \MM$ is a component of $\MM$ if and only if $b+1\notin \MM$ and $a-1\notin \MM$.
\end{defn}
\begin{defn}[$T$-regular interval]
Let $T$ be a positive integer. We say that an $\N$-interval $I$ is \emph{$T$-regular interval}  if
$I=[kT,(k+1)T-1]_\N$ for some $k\ge 0$.
\end{defn}

\begin{defn}[$\cT$-adapted set, $n$-skeleton]
Let $\cT=(T_n)_{n\in \NN}$ be a scale. We say that a set $R_\infty\subset\N$ is \emph{$\cT$-adapted} if every component of $R_\infty$ is a $T_n$-regular interval for some $n\in\N$.
Given $n\in \N$ the \emph{$n$-skeleton} $R_n$ of $R_\infty$ is the union of all components of $R_\infty$ which are $T_k$-regular intervals for some $k\ge n$.
\end{defn}

By definition for any $\cT$-adapted set $R_\infty$ we have $R_\infty=R_0\supset R_1\supset R_2\supset\ldots$.


%
%

\begin{defn}[Sparsely long tail]\label{d.tail}
Consider a scale
$\cT=(T_n)_{n\in \NN}$ with a sequence of factors $\bar\kappa=(\kappa_n)_{n\in \NN}$.
A  set $R_\infty\subset \NN$ is a \emph{$\cT$-sparsely long tail} if the following holds:
\begin{enumerate}
\item\label{i.adapted}
$R_\infty$ is $\cT$-adapted,
\item\label{i.0} $0\notin R_\infty$, in particular
$[0,T_n-1]_{\NN} \not\subset R_n$, where $R_n$ is the $n$-skeleton of $R_\infty$,
\item\label{i.center} If a $T_n$-regular interval $I=[a,b]_\N$ is not contained in $R_\infty$,
equivalently if $I\not\subset R_n$, then the $(n-1)$-skeleton $R_{n-1}$ of $R_\infty$ can intersect nontrivially only in the middle third interval of $I$ and is {\emph{$1/\kappa_n$-sparse}} in $I$, that is
\begin{gather*}
I\cap R_{n-1}\subset\left[ \left(a +\nicefrac{T_n}{3}\right),\left(b-\nicefrac{T_n}{3}\right)\right]_\NN,   \\
0< \frac{ \card{R_{n-1}\cap I}}{T_n}\le 1/\kappa_n.
\end{gather*}
\end{enumerate}
\end{defn}

\begin{defn}[Rational sparsely long tail]
We say that $R_\infty$ is a \emph{rational $\mathcal{T}$-sparsely long tail} if it satisfies Definitions \ref{d.rational} and \ref{d.tail}.
\end{defn}

Next, we extend \cite[Lemma 2.7]{BDB:} adding rationality of the tail to its conclusion. In fact, the tail constructed in \cite{BDB:} is also a rational tail but this fact is not noted there.


\begin{propo}[Existence of rational sparsely long tails]\label{p.l.tailexistence}
Let $\cT=(T_n)_{n\in \NN}$ be a scale and $\bar \kappa=(\kappa_n)_{n\in\N}$ be its  sequence of factors.
Then there is a rational $\cT$-sparsely long tail $R_\infty$ with $0<d(R_\infty)<1$.
\end{propo}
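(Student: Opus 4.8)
The plan is to build $R_\infty$ by a self-similar recursion — essentially the construction behind \cite[Lemma~2.7]{BDB:} — and then add the new observation that the set produced is rational, by showing it agrees with a periodic set off a set of vanishing density. Concretely, for each $n\ge 1$ I would define a finite pattern $W_n\subset[0,T_n-1]_\N$ as follows. Partition $[0,T_n-1]_\N$ into its $\kappa_n$ consecutive $T_{n-1}$-regular subintervals $I_0,\dots,I_{\kappa_n-1}$, where $I_k=[kT_{n-1},(k+1)T_{n-1}-1]_\N$, and single out the index $k_n\eqdef\kappa_n/3$, which is an integer (since $3\mid\kappa_n$) and satisfies $0<k_n\le 2\kappa_n/3-1$ (since $\kappa_n\ge 9$). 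Set $W_1\eqdef I_{k_1}$; for $n\ge 2$ let $W_n$ be all of $I_{k_n}$ together with, inside each $I_k$ with $k\ne k_n$, the copy of $W_{n-1}$ translated by $kT_{n-1}$. Because $k_n\ne 0$, the block $I_0$ receives a copy of $W_{n-1}$, so $W_n\cap[0,T_{n-1}-1]_\N=W_{n-1}$ and $W_{n-1}\subset W_n$; hence the $W_n$ are coherent and I set $R_\infty\eqdef\bigcup_{n\ge 1}W_n$, which then satisfies $R_\infty\cap[0,T_n-1]_\N=W_n$ for every $n$.

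\emph{Checking the definition.} Unwinding the recursion, each $W_n$ is a disjoint union of entire regular blocks — one $T_{n-1}$-block, then some $T_{n-2}$-blocks, and so on down to $T_0$-blocks — and the choice $k_m=\kappa_m/3$ places each of these blocks in the interior of its parent interval; thus they are exactly the components of $R_\infty$, each is $T_k$-regular for some $k$, and $0\notin R_\infty$, so $R_\infty$ satisfies conditions~\ref{i.adapted} and~\ref{i.0} of Definition~\ref{d.tail}. For condition~\ref{i.center}: if a $T_n$-regular interval $I$ is not contained in $R_\infty$, then tracing $I$ upward through the tower of regular blocks shows that $R_\infty\cap I$ is a translate of $W_n$; among the components of $R_\infty$ lying in $I$, only the $T_{n-1}$-blocks are $T_k$-regular with $k\ge n-1$, and $W_n$ contains exactly one of them, namely the translate of $I_{k_n}$, which occupies $[T_n/3,\,T_n/3+T_{n-1}-1]_\N\subset[T_n/3,\,2T_n/3-1]_\N$. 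Hence $R_{n-1}\cap I$ equals this single $T_{n-1}$-block, sitting in the middle third of $I$ and of cardinality $T_{n-1}=T_n/\kappa_n$, exactly as required.

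\emph{Density and rationality.} Writing $\delta_n\eqdef\card{W_n}/T_n$, the recursion $\card{W_n}=T_{n-1}+(\kappa_n-1)\card{W_{n-1}}$ translates into $\delta_n=\kappa_n^{-1}+(1-\kappa_n^{-1})\delta_{n-1}$ with $\delta_1=\kappa_1^{-1}$, so $(\delta_n)$ is non-decreasing, bounded below by $\kappa_1^{-1}>0$ and above by $\sum_{m\ge 1}\kappa_m^{-1}<1$; hence $\delta_n\uparrow\delta_\infty\in(0,1)$. For rationality, for each $n$ I would take the periodic set $B_n\eqdef\{m\in\Z: m\bmod T_n\in W_n\}$, a union of $\card{W_n}$ arithmetic progressions. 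On every $T_n$-regular block that is \emph{not} contained in $R_\infty$ the set $R_\infty$ coincides (by the self-similar structure above) with a translate of $W_n$, i.e.\ with $B_n$; so $R_\infty\symdiff B_n$ lies inside the union of the $T_n$-regular blocks that \emph{are} contained in $R_\infty$. Any such block lies in a component of $R_\infty$, which is a $T_m$-block for some $m\ge n$, and for each $m$ there is at most one $T_m$-block component of $R_\infty$ inside each $T_{m+1}$-block, so the union of the $T_m$-block components has density at most $\kappa_{m+1}^{-1}$; summing over $m\ge n$ gives $\dbar(R_\infty\symdiff B_n)\le\sum_{m> n}\kappa_m^{-1}\to 0$ as $n\to\infty$. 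Therefore $R_\infty$ is rational (Definition~\ref{d.rational}); being rational it has a density (Remark~\ref{r.complement}), and computing it along $N=T_n$ gives $d(R_\infty)=\lim_n\delta_n=\delta_\infty\in(0,1)$.

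\emph{The main difficulty.} The delicate point is meeting condition~\ref{i.center}, in particular its non-vacuity clause $0<\card{R_{n-1}\cap I}$ — which forces every non-selected $T_n$-block to contain a whole $T_{n-1}$-block — simultaneously with the two-sided density bound; this is possible only because the summability $\sum_{n}\kappa_n^{-1}<1$ is built into the definition of a scale. The ingredient genuinely beyond \cite{BDB:} is the rationality step: observing that $R_\infty$ departs from its periodic truncation $B_n$ only on the blocks it fully contains, whose aggregate density is dominated by the tail $\sum_{m> n}\kappa_m^{-1}$, which vanishes by the same summability.
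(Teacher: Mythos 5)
Your recursively defined $W_n$ coincides with the paper's $\Pi_n$ --- the paper sets $R_\infty=\bigcup_{n\ge1}(A_n+T_n\N)$ with $A_n=[T_n/3,\,T_n/3+T_{n-1}-1]_{\NN}$ and derives precisely the recursion \eqref{e.pin} --- and your periodic approximant $B_n$ is the paper's $Q_n=\bigcup_{i\le n}(A_i+T_i\N)$. The verifications of the tail conditions, of rationality via $\dbar(R_\infty\symdiff B_n)\le\sum_{m>n}\kappa_m^{-1}$, and of the density bound then proceed as in the paper, so this is the paper's argument up to presentation (you compute the exact density limit where the paper settles for two-sided estimates).
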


\begin{proof}
For each $n\in\N$ define  $A_{n+1} \eqdef\left[\nicefrac{T_{n+1}}{3}, \nicefrac{T_{n+1}}{3}+T_n-1 \right]_\NN$. Then we set
\[
R^*_\infty = \bigcup_{n\ge 1}  (A_n + T_n \ZZ)\qquad\text{and}\qquad R_\infty=R^*_\infty\cap \N.
\]
It is easy to see that the requirements imposed on the growth of the scale $\cT$ imply that the characteristic function $\chi_\infty$ of $R^*_\infty$ is a \emph{regular Toeplitz sequence} (see \cite{Downarowicz-Toeplitz}), which immediately yields that $R_\infty$ is rational. Clearly, $R_\infty\neq \N$, which gives  $0<d(R_\infty)<1$, again because $\chi_\infty$ is Toeplitz. But for the convenience of the readers we provide a direct elementary proof of these facts.

Let $\Pi_0=\emptyset$ and for each $n\ge1$ we set $\Pi_n=R_\infty\cap[0,T_n-1]_\N$. It follows that for each $n\in\N$ we have $\Pi_n\subset\Pi_{n+1}$ and
\begin{equation}\label{e.pin}
\Pi_{n+1}=
\left( \bigcup_{k=0}^{\kappa_{n+1}-1}  \Pi_n + k T_n  \right) \cup A_{n+1}.
\end{equation}
\begin{cla}
\label{cl.longsparse}
 The set $R_\infty $ is a $\cT$-sparsely long tail.
\end{cla}

\begin{proof}We need to check conditions \eqref{i.adapted}, \eqref{i.0} and \eqref{i.center} from Definition \ref{d.tail}.
First we prove \eqref{i.adapted}, which says that $R_\infty$ is $\mathcal{T}$-adapted. Fix $n\ge 0$ and note that neither $0$, nor $T_{n+1}-1$ belongs to $\Pi_{n+1}$. We claim that the components of $\Pi_{n+1}$ are $T_i$-regular for some $i\le n$. Note that the $T_n$-regular interval $A_{n+1}$ is a component of $\Pi_{n+1}$. The other components of $\Pi_{n+1}$ are components of $\Pi_n$  translated by a number $\ell T_n$, for some $\ell\in \{0,\dots,\kappa_{n+1}-1\}$. Arguing inductively, we get that the components of $R_\infty$ are $T_i$-regular for some $i\in\N$.
Obviously $0\not \in R_\infty$, yielding \eqref{i.0}. It remains to prove \eqref{i.center}. Fix $n\ge 1$. Note that
$
R_{n-1} \cap \Pi_n= A_n,
$ and $A_n$ is contained in the middle third interval of $\Pi_n$. 
It follows that
$$
\frac{\card{R_{n-1} \cap \Pi_n}}{T_n}= \frac{\card{A_n}}{T_n}=\frac{T_{n-1}}{T_n}=\frac{1}{\kappa_n}.
$$
This proves that $T_n$-regular interval $[0,T_n-1]\not\subset R_\infty$ containing $\Pi_n$ satisfies \eqref{i.center}.
The same holds for every $T_n$-regular interval $I$ not contained in $R_\infty$, because such $I\cap R_\infty=\Pi_n+j T_n$ for some $j\ge 1$.
\end{proof}

\begin{cla}
\label{cl.rational}
 The set $R_\infty $ is rational.
 \end{cla}
\begin{proof}
Define $Q_n\eqdef  \bigcup_{i=1}^{n} (A_i + T_i \N)$. Then $Q_n$is a finite union of arithmetic sequences. Furthermore, $R_\infty\symdiff Q_n  \subset R_n$ and the $n$ skeleton $R_n$ of
$R_\infty$ satisfies
\begin{equation}\label{e.rn}
R_n =\bigcup_{i=n+1}^\infty (A_i + T_i \N).
\end{equation}
Thus it is enough to see that $\bar d (R_n) \to 0$ as $n\to\infty$. But by \eqref{e.rn} and subadditivity of $\dbar$ we have
\[
\dbar(R_n) =\dbar\bigg(\bigcup_{i=n+1}^\infty (A_i + T_i \N)\bigg)\le \sum_{i=n+1}^\infty \dbar(A_i + T_i \N).
\]
It is easy to see that for each $i\ge 1$ we have $\dbar(A_i + T_i \N) = T_{i-1}/T_i=1/\kappa_i$. As a conclusion, we get
$\dbar (R_n) \le \sum_{i=n+1}^\infty \frac{1}{\kappa_n}$,
 proving that the tail is rational.
\end{proof}

\begin{cla}\label{cl.density}
We have $0<d(R_\infty)<1$.
 \end{cla}

\begin{proof}
The set is rational and hence
it has a well defined density, see Remark~\ref{r.complement}. We also have
\[
\frac{1}{\kappa_1}=
\dbar(A_1+T_1\N)\le \dbar(R_\infty)=\dbar\bigg(\bigcup_{i=1}^\infty (A_i + T_i \N)\bigg)
\le\sum_{i=1}^\infty\frac{1}{\kappa_i}<1.\qedhere
\]
\end{proof}

The lemma now follows from Claims~\ref{cl.longsparse}, \ref{cl.rational},
and
\ref{cl.density}.
\end{proof}



\section{Double flip-flop families}
\label{s.flipflop}

In this section, we review the definitions of flip-flop families  following \cite{BBD:16,BDB:}. We also introduce the notion of a double flip-flop family and
prove that flip-flops families yield double flip-flop families, see Proposition~\ref{p.yieldsdouble}.
Using these families and Theorem~\ref{thm:main} obtain ergodic measures  with full support, positive entropy, and zero average for a continuous potential $\varphi\colon X \to \mathbb{R}$, see Theorem~\ref{t.flipfloptailqual}.

In what follows, $(X, \rho)$ is a compact metric space,
$f\colon X \to X$ is a homeomorphism, and $\varphi\colon X \to \mathbb{R}$ is
a continuous function.

\subsection{Flip-flop families}
 We begin by recalling the definition of flip-flop families.

\begin{defn}[Flip-flop family]\label{d.flipflop}
A \emph{flip-flop family} associated to $\varphi$ and $f$ is a family
$\fF=\fF^+\sqcup\fF^-$ of compact subsets of $X$ called {\emph{plaques}}\footnote{We pay special attention to the case when the sets of the flip-flop family
are discs tangent to a strong unstable cone field. This justifies this name.}
such that there are $\alpha>0$ and a
sequence of numbers  $(\zeta_n)_n$ and $\zeta_n\to 0^+$ as $n\to \infty$,
satisfying:
\begin{enumerate}
 \item\label{i.flipflop1}
 let $F_\fF^+\eqdef  \bigcup_{D\in\fF^+} D$ (resp. $F_\fF^-\eqdef \bigcup_{D\in\fF^-} D $), then  $\varphi(x)>\alpha$ for every $x\in F_\fF^+$
 (resp. $\varphi(x)< -\alpha$ for every $x\in F_\fF^-$);
 \item\label{i.flipflop2}
 for every $D\in \fF$, there are sets $D^+\in \fF^+$ and $D^-\in\fF^-$ contained in $f(D)$;
 \item\label{i.flipflop33}
 for every  $n>0$ and  every family of sets $D_i\in \fF$, $i\in\{0,\dots ,n\}$,  with $D_{i+1}\subset f(D_i)$
 it holds
  $$
   \mathrm{diam} (f^{-i} (D_n))\le \zeta_i, \quad \mbox{for every $i\in\{0,\dots,n\}$.}
 $$
\end{enumerate}
\end{defn}

We now recall the notions of $f$-sojourns. Note that in this definition the flip-flop family may be relative to
a power $f^k$ of $f$, but the sojourns are relative to $f$. Furthermore, $\varphi_k$ stands here for the Birkhoff averages of a map $\varphi$ with respect to $f$ introduced in \eqref{e.averages}.

\begin{defn}[Flip-flop family with $f$-sojourns]\label{d.flipfloptail}
Consider a  flip-flop  family
 $\fF=\fF^+\sqcup\fF^-$
  associated to $\varphi_k$ and $f^k$ for some $k\ge 1$ and
a compact subset $Y$ of $X$.
The \emph{flip-flop family $\fF$ has $f$-sojourns along $Y$}
(or   \emph{$\fF$ $f$-sojourns along $Y$}) if
there is a sequence $(\eta_n)_n$ and $\eta_n\to 0^+$, such that
for every $\delta>0$ there is an integer $N=N_\delta$ so that
every plaque $D\in\fF$ contains subsets $\widehat D^+, \widehat D^-$ satisfying:
\begin{enumerate}
 \item\label{i.defff0}
 for every $x\in \widehat D^+\cup \widehat D^-$ the orbit segment $\{x,\dots, f^N(x)\}$ is $\delta$-dense in $Y$
 (i.e., the $\delta$-neighbourhood of the orbit segment contains $Y$);
 \item\label{i.defff1}
 $f^N(\widehat D^+)=\widehat D^+_N\in \fF^+$ and $f^N(\widehat D^-)=\widehat D^-_N\in\fF^-$;
 \item\label{i.defff2}
 for every $i\in\{0,\dots, N\}$
 it holds
 $$
   \mathrm{diam} (f^{-i} (\widehat D_N^\pm))\le \eta_i.
 $$
\end{enumerate}
\end{defn}
 We are now in position to define double flip-flop families (with sojourns). Observe that the remark before Definition \ref{d.flipfloptail} applies also to Definition \ref{d.dflipfloptail}.

\begin{defn}[Double flip-flop family]\label{d.dflipflop}
A \emph{double flip-flop family} associated to $\varphi$ and $f$ is a family
$\fD\eqdef \fD^+_0\sqcup\fD^+_1 \sqcup \fD^-_0\sqcup\fD^-_1 $ of compact subsets of $X$ such that there are $\alpha>0$ and a
sequence of numbers  $(\zeta_n)_n$ with $\zeta_n\to 0^+$ as $n\to \infty$,
and with the following properties:
let
$E^+_i\eqdef  \bigcup_{D\in\fD^+_i} D$ and
$E^-_i\eqdef  \bigcup_{D\in\fD^-_i} D$, where $i=0,1$,
$E^+\eqdef E^+_0 \cup E^+_1$, and
$E^-\eqdef E^-_0 \cup E^-_1$.
\begin{enumerate}
 \item\label{i.dflipflop1} $\varphi(x)\geq\alpha$ for every $x\in E^+$ and $\varphi(x)\leq -\alpha$ for every $x\in E^-$;
  \item\label{i.dflipflop2}
 for every $D\in \fD$, there are sets $D^+_0\in \fD^+_0$, $D^+_1\in \fD^+_1$, $D^-_0\in \fD^-_0$, and
 $D^-_1\in \fD^-_1$    contained in $f(D)$;
 \item\label{i.dflipflop3}
 for every  $n>0$ and  every family of sets $D_i\in \fD$, $i\in\{0,\dots ,n\}$,  with $D_{i+1}\subset f(D_i)$
 it holds
  $$
   \mathrm{diam} (f^{-i} (D_n))\le \zeta_i, \quad \mbox{for every $i\in\{0,\dots,n\}$.}
%
%
 $$
 \item\label{i.dflipflop4}
 The closures of the sets $E^+_0, E^+_1, E^-_0, E^-_1$ are pairwise disjoint\footnote{This condition is straightforward in the flip-flop case since
 $\varphi$ is strictly bigger that $\alpha>0$ in $F^+_\cF$ and strictly less than $-\alpha<0$ in $F^-_\fF$.}.
\end{enumerate}
\end{defn}

\begin{defn}[Double flip-flop family with $f$-sojourns along $Y$]\label{d.dflipfloptail}Let $\fD\eqdef \fD^+_0\sqcup\fD^+_1 \sqcup \fD^-_0\sqcup\fD^-_1 $ be a double flip-flop family associated to $\varphi_k$, $f^k$,  $k\ge 1$.
Given a compact subset $Y$ of $X$ we say that
\emph{$\fD$ has $f$-sojourns along $Y$}
(or that  \emph{$\fD$ $f$-sojourns along $Y$}) if
there is a sequence $(\eta_n)_n$ and $\eta_n\to 0^+$, such that
for every $\delta>0$ there is an integer $N=N_\delta$ such that
every plaque $D\in\fD$ contains subsets $\widehat D^+_0, \widehat D^+_1,
\widehat D^-_0, \widehat D^-_1$ such that:
\begin{enumerate}
 \item\label{i.def-dff0}
 for every $x\in \widehat D^+_0\cup \widehat D^+_1 \widehat D^-_0 \cup \widehat D^-_1$ the orbit segment $\{x,f(x),\dots, f^N(x)\}$ is $\delta$-dense in $Y$;
 \item\label{i.def-dff1}
 $f^N(\widehat D^i_j)= D^i_{N,j} \in \fD^i_j$, $i\in \{-,+\}$ and $j\in \{0,1\}$;
 \item\label{i.def-dff2}
 for every $i\in\{0,\dots, N\}$
 it holds
 $$
   \mathrm{diam} (f^{-i} (\widehat D_{N,j}^\pm))\le \eta_i.
 $$
\end{enumerate}
\end{defn}

\begin{rem} \label{r.multiple}
In the previous definitions the constant $N$ can be chosen a multiple of $k$.
\end{rem}

\subsection{Existence of double flip-flop families with sojourns}
\label{ss.double}
We now prove that existence of flip-flop families with sojourns implies the existence of  double flip-flop families with sojourns.

\begin{propo}
\label{p.yieldsdouble}
Consider a  flip-flop  family
 $\fF=\fF^+\sqcup\fF^-$
  associated to $\varphi_k$ and $f^k$  for some $k\ge 1$ with $f$-sojourns in
a compact subset $Y$ of $X$. Then there are $r\ge 1$ and a double flip-flop family $\fD= \fD^+_0\sqcup \fD^+_1 \sqcup
\fD^-_0\sqcup \fD^-_1$ associated to $f^r$ and $\varphi_r$ with $f$-sojourns along $Y$.
\end{propo}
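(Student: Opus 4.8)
The idea is to take two consecutive ``copies'' of the flip-flop mechanism and label them with the symbols $0$ and $1$, so that the first application of the sojourn process forces a prescribed symbol while the splitting property \eqref{i.flipflop2} still survives. Concretely, I would set $r=2k$ (or a suitable multiple of $k$, using Remark~\ref{r.multiple}), and build the new plaques by concatenating: a plaque of $\fD^{\epsilon}_j$ should be (a suitable $f^{N_j}$-iterate of) a plaque $D\in\fF$ whose orbit, in its first ``block'', visits one of two disjoint compact regions $Y_0,Y_1\subset X$ depending on $j$. Since $\fF$ has $f$-sojourns along $Y$, and $Y$ can be shrunk or one can choose two disjoint subsets $Y_0,Y_1$ of $Y$ that the sojourn orbits are forced to $\delta$-visit, the visit to $Y_j$ at a controlled time produces the symbol $j$. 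The sign $\epsilon\in\{+,-\}$ is inherited from $\fF^\pm$ in the usual way via property \eqref{i.flipflop2}, which guarantees that every resulting plaque contains sub-plaques of all four types $\fD^+_0,\fD^+_1,\fD^-_0,\fD^-_1$.

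\textbf{Steps.} First I would fix $\delta_0>0$ small enough that the $\delta_0$-neighbourhoods of two chosen disjoint points (or compact sets) $y_0,y_1\in Y$ are disjoint, and let $N_0=N_{\delta_0}$ be the sojourn constant from Definition~\ref{d.flipfloptail}; replacing $N_0$ by a multiple of $k$ as in Remark~\ref{r.multiple}, set $r$ to be this constant (or $r=k$ if one prefers to absorb the sojourn into the diameter-control sequence). Second, for a plaque $D\in\fF$ I would use the sojourn subsets $\widehat D^+,\widehat D^-\subset D$ whose $f^{N_0}$-images lie in $\fF^\pm$ and whose orbit segments are $\delta_0$-dense in $Y$, hence pass within $\delta_0$ of both $y_0$ and $y_1$; I then further subdivide these to record \emph{which} region is visited first (or at a fixed coordinate), thereby defining the label $j\in\{0,1\}$, and declare $\fD^{\epsilon}_j$ to be the collection of all $f^{r}$-images of such sub-plaques. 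Third, I would verify the four axioms of Definition~\ref{d.dflipflop} for $(\fD,\varphi_r,f^r)$: axiom \eqref{i.dflipflop1} follows since on the new plaques $\varphi_r$ is, up to the uniform sojourn correction, controlled by the sign $\epsilon$ of the underlying $\fF^\pm$ plaque (one may need to enlarge $r$ so that the finitely many sojourn contributions are negligible against $\alpha$); axiom \eqref{i.dflipflop2} is the key splitting property and follows from iterating \eqref{i.flipflop2} of $\fF$ together with the fact that \emph{both} labels $j=0,1$ are realisable because the sojourn orbit visits both $Y_0$ and $Y_1$; axiom \eqref{i.dflipflop3} (diameter control) is inherited from \eqref{i.flipflop33} for $\fF$, reindexed by the factor $r/k$; and axiom \eqref{i.dflipflop4} (pairwise disjoint closures of $E^+_0,E^+_1,E^-_0,E^-_1$) follows from separating $+$ from $-$ by the sign of $\varphi$ as in the footnote, and separating $0$ from $1$ by the chosen disjoint regions $Y_0,Y_1$ (shrinking $\delta_0$ if necessary). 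Finally, I would check the $f$-sojourn property of $\fD$: given any target $\delta>0$ one uses the $f$-sojourn property of $\fF$ with parameter $\delta$ to find $N_\delta$ and the associated $\widehat D^\pm$, then repeats the labelling trick inside these to extract the four subsets $\widehat D^i_j$ with the required $\delta$-density, landing-in-$\fD^i_j$, and diameter-control conclusions; $N_\delta$ for $\fD$ can be taken the same as (or a bounded multiple of) that for $\fF$.

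\textbf{Main obstacle.} I expect the delicate point to be arranging the two labels so that axiom \eqref{i.dflipflop2} \emph{and} axiom \eqref{i.dflipflop4} hold \emph{simultaneously}: one needs the label $j$ to be genuinely free (both values occur inside every plaque, for the splitting) yet to separate the four closures (so the labelled regions are disjoint), and one must ensure the diameter-control sequence $(\zeta_n)$ and the sojourn-tail sequence $(\eta_n)$ from $\fF$ survive the reindexing by $r$ without the $\varphi_r$-averages drifting out of the regions $\varphi\ge\alpha$, $\varphi\le-\alpha$. The clean way is to choose $Y_0,Y_1$ as two disjoint compact subsets of $Y$ that every $\delta$-dense (in $Y$) orbit segment must intersect for $\delta$ small, record the first visit, and then note that passing from $f^k$ to $f^r=f^{2k}$ (concatenating a label-free block with a label-recording block) gives enough room; all the estimates are then routine consequences of the already-established properties of $\fF$ and of Remark~\ref{r.multiple}.
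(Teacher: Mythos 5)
Your proposal takes a genuinely different route from the paper, but it has a gap at its core. The paper manufactures the two labels $0$ and $1$ out of the \emph{branching} property~\eqref{i.flipflop2} of the flip-flop family: for a plaque $D\in\fF$ and a large integer $\ell$, one chooses a sub-plaque $D_{\pm^\ell,\pm}\subset D$ whose $f^{ki}$-iterates, $i=1,\dots,\ell$, lie in plaques of $\fF^+$ (resp.\ $\fF^-$) and whose $f^{k(\ell+1)}$-iterate lands in $\fF^+$ or $\fF^-$; the sign of the double flip-flop family is decided by the first $\ell$ steps (which dominate the Birkhoff average $\varphi_{k(\ell+1)}$ when $\ell$ is large), while the label $j\in\{0,1\}$ is decided by the free choice at the $(\ell+1)$-th step. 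Both the splitting axiom~\eqref{i.dflipflop2} and the disjointness axiom~\eqref{i.dflipflop4} are then automatic, because the four sign patterns at times $k\ell$ and $k(\ell+1)$ are genuinely different and are detected by $\varphi_k$.

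You instead try to create the label from the \emph{sojourn}, by picking two disjoint compact subsets $Y_0,Y_1\subset Y$ and recording ``which region is visited first (or at a fixed coordinate).'' This does not produce a branching choice. For any point $x$ in one of the sojourn subsets $\widehat D^\pm$, the orbit segment of length $N_{\delta_0}$ is $\delta_0$-dense in $Y$ and therefore visits \emph{both} $Y_0$ and $Y_1$; moreover the diameter control~\eqref{i.defff2} forces the $i$-th iterate of the whole sub-plaque $\widehat D^\pm$ to lie in a set of diameter $\le\eta_{N-i}$, so at any fixed time coordinate the \emph{entire} sub-plaque receives the \emph{same} deterministic label. There is no further subdivision available that realises both labels inside a given plaque, so axiom~\eqref{i.dflipflop2} fails for your proposed family: you cannot guarantee that $f^r(D)$ contains sub-plaques of all four types. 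The branching needed for axiom~\eqref{i.dflipflop2} has to be supplied by the flip-flop property~\eqref{i.flipflop2} itself, not by density of a single orbit. A secondary but related issue: $r=2k$ is too small even if one could obtain the branching, because a single ``label-creating'' block contributes a term of sign opposite to the declared sign $\epsilon$, and one needs $\ell$ of the other blocks to outweigh it in the average $\varphi_{k(\ell+1)}$; your parenthetical ``a suitable multiple of $k$'' gestures at this but does not identify that $\ell$ must be chosen large precisely to control the sign of $\varphi_r$ on the new plaques (axiom~\eqref{i.dflipflop1}).
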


\begin{proof}
Given a plaque $D\in \fF$ and $\ell \ge 1$, consider subsets
$D_{+^\ell,+}$ $D_{+^\ell,-}$, $D_{-^\ell,+}$, and $D_{-^\ell,-}$ of $D$ satisfying
\begin{itemize}
\item
$f^{ki} (D_{+^\ell,+})$ is contained in some plaque of $\fF^+$
for every $i\in \{1,\dots,\ell\}$ and  $f^{k(\ell+1)} (D_{+^\ell,+})\in \fF^+$,
\item
$f^{ki} (D_{+^\ell,-})$
is contained in some plaque of $\fF^+$
for every $i\in \{1,\dots,\ell\}$ and  $f^{k(\ell+1)} (D_{+^\ell,-})\in \fF^-$,
\item
$f^{ki} (D_{-^\ell,-})$ is contained in some plaque of $\fF^-$ for every $i\in \{1,\dots,\ell\}$ and  $f^{k(\ell+1)} (D_{+^\ell,-})\in \fF^-$,
\item
$f^{ki} (D_{-^\ell,+}))$ is contained in some plaque of $\fF^-$ for every $i\in \{1,\dots,\ell\}$ and  $f^{k(\ell+1)} (D_{-^\ell,+})\in \fF^+$.
\end{itemize}
The existence of these subsets is assured by item \eqref{i.flipflop2} in the  definition of a flip-flop family.

Using the continuity of $\varphi$, we have that for every  $\ell$ large enough there is $\alpha'>$ such that
\[
\begin{split}
&\varphi_{k(\ell+1)} (x) > \alpha'>0 \quad \mbox{if $x\in D_{+^\ell,\pm}$},\\
&\varphi_{k(\ell+1)} (x) < -\alpha'<0 \quad \mbox{if $x\in D_{-^\ell,\pm}$}.
\end{split}
\]
We use here that the $\ell+1$ Birkhoff averages of $\varphi_k$ with respect to $f^k$ are the same as $\varphi_{k(\ell+1)}$, that is, $k(\ell+1)$ averages of $\varphi$ with respect to $f$.
We fix such a large $\ell$ and define
\[
\begin{split}
\fD^+_0&\eqdef \{D_{+^\ell,+}, \, D\in \fF\}, \quad
\fD^+_1\eqdef \{D_{+^\ell,-}, \, D\in \fF\},\\
\fD^-_0&\eqdef \{D_{-^\ell,+}, \, D\in \fF\}, \quad
\fD^-_1\eqdef \{D_{-^\ell,-}, \, D\in \fF\}.
\end{split}
\]
By construction $\fD=\fD^+_0\sqcup \fD^+_1\sqcup \fD^-_0\sqcup \fD^-_1$ satisfies conditions
\eqref{i.dflipflop1}, \eqref{i.dflipflop2}, and \eqref{i.dflipflop3} in the definition of double flip-flop family for $f^{k(\ell+1)}$ and  $\varphi_{k(\ell+1)}$.
To check condition \eqref{i.dflipflop4},  i.e, the closures of the sets $E^+_0, E^+_1, E^-_0, E^-_1$  are pairwise disjoint, just observe that
the value of $\varphi$ on the $k\ell$ and  $k(\ell+1)$ iterates of theses sets are uniformly separated.

It remains to get the sojourns property.
Fix small $\delta>0$ and consider the number $N=N_\delta$ in the definition of sojourn for $\fF$.
Take a set $D\in \fD$ and consider $f^{k(\ell+1)}(D)=\widehat D\in \fF$. The sojourns property for $\fF$ provides a
subset $\widehat D'$ such that $f^N (\widehat D')\in \fF$ and the  first $N$ iterates of any point $x\in \widehat D'$
are $\delta$-dense in $Y$. Consider now $f^{-k(\ell+1)} (\widehat D')\subset D$. It is enough now to observe that any point in that set is such that its first
$k(\ell+1)+N$ iterates are $\delta$-dense in $Y$. We omit the choice of the sequences $\zeta_i$ and $\eta_i$ in the previous construction. We finish the proof by taking $r=k(\ell+1)$.
\end{proof}
%
%

\subsection{Support, average, and entropy}
\label{ss.support}
We now obtain ergodic measures  with full support and positive entropy satisfying $\int \varphi d\mu=0$.

\begin{theo}\label{t.flipfloptailqual}
Let $(X,\rho)$ be a compact metric space,
$Y$ a compact subset of $X$, $f\colon X \to X$ a homeomorphism, and $\varphi\colon X \to \mathbb{R}$
a continuous function.
Assume that there is  a flip-flop family $\fF$  associated to $\varphi_k$ and $f^k$ for some $k\ge 1$ having
$f$-sojourns along $Y$.
Then there is an ergodic measure $\mu$ with positive entropy whose support contains $Y$ and such that $\int \varphi \,d\mu =0$.

%
\end{theo}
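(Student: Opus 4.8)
The plan is to reduce the statement to the construction of a controlled point from \cite{BDB:}, enriched with the entropy criterion of Theorem~\ref{thm:main}, at the cost of passing to a power of $f$. First I would apply Proposition~\ref{p.yieldsdouble} to the given flip-flop family $\fF$: this furnishes an integer $r\ge 1$ and a double flip-flop family $\fD=\fD^+_0\sqcup\fD^+_1\sqcup\fD^-_0\sqcup\fD^-_1$ associated to $g\eqdef f^r$ and $\psi\eqdef\varphi_r$ with $f$-sojourns along $Y$. With the notation of Definition~\ref{d.dflipflop} I set $K_0\eqdef\overline{E^+_0\cup E^-_0}$ and $K_1\eqdef\overline{E^+_1\cup E^-_1}$; by condition~\eqref{i.dflipflop4} these are disjoint compact subsets of $X$, with $\psi\ge\alpha$ on the $\fD^+$-plaques and $\psi\le-\alpha$ on the $\fD^-$-plaques.

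Next I would fix the combinatorial data. Choose a scale $\cT=(T_n)$ with factors $\bar\kappa=(\kappa_n)$ and, by Proposition~\ref{p.l.tailexistence}, a rational $\cT$-sparsely long tail $R_\infty\subset\N$ with $0<d(R_\infty)<1$; put $J\eqdef\N\setminus R_\infty$, which by Remark~\ref{r.complement} is again rational with $0<d(J)=1-d(R_\infty)<1$. Fix once and for all a point $z\in\Omega_2$ generic for $\xi_{1/2}$ (such a point exists by the pointwise ergodic theorem applied to $(\Omega_2,\xi_{1/2},\sigma)$). By Remark~\ref{r.multiple} I may assume all sojourn lengths are multiples of $r$, so that every construction fits on the time-grid of $g$.

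Then I would carry out the construction of a controlled point from \cite{BDB:}, with $\fF$ replaced by the double flip-flop family $\fD$ and with the tail $R_\infty$ above, adding one prescription: along the regular part of the orbit the $0/1$ alternative between the two ``halves'' of $\fD$ follows $z$. Concretely, $\bar x$ is obtained as the unique point of a nested sequence of plaques of $\fD$ such that for every regular time $j\in J$ the plaque $D_j$ containing $g^j(\bar x)$ lies in $\fD^{s_j}_{z(j)}$, the signs $s_j\in\{+,-\}$ being chosen (together with the finitely many corrective $\fD^\pm$-plaques inserted right after each sojourn) so that the Birkhoff averages of $\psi$ along $g$-orbit segments of length $T_n$ issued from regular times stay below a prescribed $\eps_n\to 0^+$; at the tail times $j\in R_\infty$ the orbit performs the $f$-sojourns provided by $\fD$, which are $\delta$-dense in $Y$ with $\delta\to 0$. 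All these choices are admissible by condition~\eqref{i.dflipflop2} and the sojourn property, and conditions~\eqref{i.dflipflop3} and \eqref{i.def-dff2} guarantee, exactly as in \cite{BDB:}, that the nested plaques shrink to a point and that the $g$-orbit of $\bar x$ is controlled at any scale with the long sparse tail $R_\infty$. The verbatim arguments of \cite{BDB:} then show that $\bar x$ is a generic point for an ergodic $g$-invariant measure $\tilde\mu$ with $\supp\tilde\mu\supseteq Y$ and $\int\psi\,d\tilde\mu=0$. Moreover $g^j(\bar x)\in E^{s_j}_{z(j)}\subset K_{z(j)}$ for every $j\in J$, so the hypothesis of Theorem~\ref{thm:main} (for the map $g$, with $\mathbf{K}=(K_0,K_1)$ and the rational set $J$ with $0<d(J)<1$) is met, and hence $h_{\tilde\mu}(g)\ge d(J)\cdot\log 2>0$.

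It remains to descend from $g=f^r$ to $f$. Set $\mu\eqdef\frac1r\sum_{j=0}^{r-1}f^j_*\tilde\mu$, an $f$-invariant measure. If $A\subset X$ is $f$-invariant then $A$ is $f^r$-invariant, so $\tilde\mu(A)\in\{0,1\}$ and $f^j_*\tilde\mu(A)=\tilde\mu(f^{-j}A)=\tilde\mu(A)$; hence $\mu(A)=\tilde\mu(A)\in\{0,1\}$ and $\mu$ is ergodic. Clearly $\supp\mu\supseteq\supp\tilde\mu\supseteq Y$, and since $\varphi_r=\frac1r\sum_{j=0}^{r-1}\varphi\circ f^j$ we get $\int\varphi\,d\mu=\int\varphi_r\,d\tilde\mu=0$. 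Finally, entropy is affine on invariant measures and $f^j$ conjugates $(g,\tilde\mu)$ to $(g,f^j_*\tilde\mu)$, so $h_\mu(f^r)=\frac1r\sum_{j=0}^{r-1}h_{f^j_*\tilde\mu}(f^r)=h_{\tilde\mu}(f^r)>0$, whence $h_\mu(f)=\tfrac1r h_\mu(f^r)>0$. I expect the main obstacle to be Step three: running the \cite{BDB:} construction while simultaneously making the tail a rational set of prescribed density, forcing the $\fD_0$-versus-$\fD_1$ alternative to follow the fixed Bernoulli word $z$ at \emph{every} regular time, and still retaining enough freedom in the $\pm$ signs and in the post-sojourn corrections to keep all $T_n$-window averages of $\psi$ small; threading this extra $0/1$-bookkeeping through the recursive construction without disturbing the estimates of \cite{BDB:} is the technical heart of the matter, everything else being a routine assembly of results already in hand.
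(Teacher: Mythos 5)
Your proposal follows the paper's overall strategy — Proposition~\ref{p.yieldsdouble} to obtain a double flip-flop family for $g=f^r$, a rational $\cT$-sparsely long tail from Proposition~\ref{p.l.tailexistence}, a controlled point $\bar x$ with prescribed $\mathbf K$-itinerary over $J=\N\setminus R_\infty$, and Theorem~\ref{thm:main} for the entropy bound — but there is a genuine gap in the middle step. You assert that $\bar x$ is a \emph{generic} point for an \emph{ergodic} $g$-invariant measure $\tilde\mu$. Nothing in the control-at-any-scale-with-a-long-sparse-tail mechanism forces the empirical measures $\mu_n(\bar x,g)$ to converge, let alone to an ergodic limit; the paper and \cite{BDB:} consistently write ``any measure generated by $x$'', and the paper's own deduction of Theorem~\ref{t.flipfloptailqual} from Proposition~\ref{p.maintech} explicitly treats a possibly non-ergodic $\mu\in V(x)$, establishing via \cite[Prop.~2.17 and Prop.~2.2]{BDB:} that $\mu$-a.e.\ point has zero $\varphi$-average and dense $f$-orbit in $Y$, that $h_\mu(f)\ge d(J)\log 2/r$, and only at the very end passes to an ergodic component $\nu$ of $\mu$ in which all three properties persist.

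Your unsupported ergodicity claim is not cosmetic: your descent step from $g$ to $f$ proves that $\mu=\frac1r\sum_{j}f^j_*\tilde\mu$ is $f$-ergodic by arguing that $f$-invariant sets have $\tilde\mu$-measure $0$ or $1$, which is valid only if $\tilde\mu$ is $g$-ergodic. The gap is repairable: start from any $g$-invariant $\tilde\mu$ generated by $\bar x$ (for $g$), observe that $\tilde\mu$-a.e.\ $y$ has vanishing $\varphi_r$-average and $\delta$-dense $f$-orbits in $Y$ and that $h_{\tilde\mu}(g)\ge d(J)\log 2>0$, and conclude from the ergodic decomposition that a positive-measure set of $g$-ergodic components $\tilde\nu$ satisfies all three properties simultaneously; then run your averaging argument with such a $\tilde\nu$ in place of $\tilde\mu$. (Also a minor slip: $\supp\tilde\mu\supseteq Y$ does not follow from density of the $f$-orbit of $\tilde\mu$-a.e.\ point, only $\bigcup_{0\le j<r}f^j(\supp\tilde\mu)\supseteq Y$ does, which is $\supp\mu\supseteq Y$ and is what you actually need.) Modulo these repairs, your route — working at the $g$-level and then averaging over $f^0,\dots,f^{r-1}$ — is a legitimate and slightly more explicit alternative to the paper's approach, which works with $f$-generated measures throughout.
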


First note that by Proposition~\ref{p.yieldsdouble} we can assume that there is a double flip-flop family
$\fD=\fD^+_0\sqcup \fD^+_1\sqcup \fD^-_0\sqcup \fD^-_1$
relative to
$\varphi_r$, $f^r$, and some $r\ge 1$ with $f$-sojourns along $Y$.
We define the sets
$$
K_0\eqdef
\mathrm{closure} \left(
\bigcup_{D\in \fD^+_0\cup \fD^-_0} D \right)
\quad
\mbox{and}
\quad
K_1\eqdef
\mathrm{closure} \left(
\bigcup_{D\in \fD^+_1\cup \fD^-_1} D \right).
$$
Recall that the sets $K_0$ and $K_1$ are disjoint.
The pair $\mathbf{K}\eqdef (K_0, K_1)$ is the {\emph{division associated to $\fD$.}}

We need to recall some definitions from \cite{BDB:}.
Consider sequences $\bar \delta =(\delta_n)_{n\in \NN}$,
$\bar \alpha =(\alpha_n)_{n\in \NN}$, and $\bar \varepsilon =(\varepsilon_n)_{n\in \NN}$ of positive numbers converging to $0$ as $n\to \infty$.
Consider  a scale $\mathcal{T}=(T_n)_{n\in \NN}$ and a $\mathcal{T}$-long $\bar \varepsilon$-sparse tail $R_\infty$. 

\begin{defn}[$\bar \alpha$-control and $\bar\delta$-denseness]
A point $x\in X$ is
{\emph{$\bar \alpha$-controlled for $\varphi$  
with a tail $R_\infty$}} if for every $n\in\N$ and every $T_n$-regular interval $I$ that is not strictly contained in a component
of $R_\infty$ it holds
$$
\frac{1}{T_n} \sum_{j\in I} \varphi (f^j(x))\in [-\alpha_n, \alpha_n].
$$
The orbit of a point $x\in X$ is {\emph{$\bar\delta$-dense in $Y$ along the tail $R_\infty$ }} if for every component $I$ of $R_\infty$ of size $T_n$
the segment of orbit $\{f^j(x), j\in I\}$ is $\delta_n$-dense in $Y$.
\end{defn}

We are now ready to state the main technical step of the proof of Theorem~\ref{t.flipfloptailqual}.
This  is a reformulation of \cite[Theorem 2]{BDB:} with an additional control of the itine\-ra\-ries. This control leads to
positive entropy.
For the notion of a $\bf K$-itinerary 
of a point over a set see Section~\ref{s.measurespositive}.
\begin{propo}\label{p.maintech}
Let  $(X, \rho)$ be a compact metric space, $Y$ a compact subset of $X$, $f\colon X \to X$ be  a homeomorphism, and
 $\varphi\colon X \to \mathbb{R}$ be a continuous map. Assume that
there is a  double flip-flop  family
$\fD$ associated to $\varphi_r$, $f^r$ for some $r\ge 1$ with $f$-sojourns along $Y$. Let $\mathbf{K}=(K_0,K_1)$ the division of $\fD$.

Consider sequences $\bar \alpha =(\alpha_n)_{n\in \NN}$ and  $\bar \delta=(\delta_n)_{n\in \NN}$ of positive numbers converging to $0$ and
$\omega \in \Omega_2$. 
Then there are a scale $\mathcal{T}$ and a rational and $\mathcal{T}$-sparsely long tail $R_\infty$ such that:  for every plaque $D\in \fD$ there is a point $x\in D$ satisfying
\begin{enumerate}
\item\label{i.p.control}
the Birkhoff averages of $\varphi_r$ along the orbit of $x$ with respect to $f^r$ are $\bar\alpha$-controlled 
with the tail $R_\infty$,
\item \label{i.p.density}
the $f$-orbit of $x$ is $\bar\delta$-dense in $Y$ along $R_\infty$,
\item \label{i.p.itinerary} $\omega$ is the $\mathbf{K}$-itinerary of $x$ with respect to $f^r$ over $J\eqdef \NN \setminus R_\infty$.
\end{enumerate}
\end{propo}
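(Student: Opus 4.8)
The strategy is to rerun the proof of \cite[Theorem~2]{BDB:} keeping the same combinatorial skeleton, and to exploit the extra structure of a \emph{double} flip-flop family in order to insert the prescribed itinerary ``for free'': whenever the original argument selects a plaque in $\fF^{+}$ or $\fF^{-}$ along the regular part of the orbit, we instead select a plaque in $\fD^{+}_{\omega(j)}$ or $\fD^{-}_{\omega(j)}$, so that the subscript records $\omega$ while the superscript keeps playing the balancing role of the sign in \cite{BDB:}. Concretely, I would first fix $T_0$ a large multiple of $3$ and then build the scale $\cT=(T_n)$ together with its factors $\bar\kappa$ recursively, choosing at stage $n$ the factor $\kappa_n$ (so $T_n=\kappa_nT_{n-1}$) large enough to meet simultaneously: the scale axioms ($3\kappa_{n-1}\mid\kappa_n$ and $\kappa_n/\kappa_{n-1}\to\infty$); the requirement that a sojourn of $f$-length $N_{\delta_n}$ — taken a multiple of $r$ by Remark~\ref{r.multiple} — occupies only a negligible fraction of an interval of $f$-length $rT_n$; and the inequalities $\kappa_n\ge\alpha_{n-1}/\alpha_n$ and $\kappa_n\ge\sup_X|\varphi|/\alpha_{n-1}$ needed for the balancing estimate below. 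Once $\cT$ is fixed I would let $R_\infty$ be the rational $\cT$-sparsely long tail supplied by Proposition~\ref{p.l.tailexistence}; then $0<d(R_\infty)<1$, $J=\NN\setminus R_\infty$ is rational by Remark~\ref{r.complement}, and — crucially — the tail pattern is entirely determined by $\cT$ (inside each $T_{n+1}$-regular interval the only constituent $T_n$-regular sub-interval entirely contained in $R_\infty$ is the one sitting in the middle third).

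Next, for each plaque $D\in\fD$ I would build a nested sequence of compact subsets of $D$ whose intersection is the desired point $x$, proceeding recursively on the scale. Along positions $j\in J$ (regular part) I use the double flip-flop property \eqref{i.dflipflop2}: from the current plaque one may pass into \emph{any} of the four subfamilies, so I take the next plaque in $\fD^{+}_{\omega(j)}$ or in $\fD^{-}_{\omega(j)}$ — the subscript forces $f^{rj}(x)\in K_{\omega(j)}$, hence condition \eqref{i.p.itinerary}, while the superscript remains a free $\pm$ choice. Along a tail component of $f^r$-length $T_n$ I reserve an initial block of $f$-length $N_{\delta_n}$ and invoke the double flip-flop $f$-sojourn property (Definition~\ref{d.dflipfloptail}) with $\delta=\delta_n$: this yields a sub-plaque whose first $N_{\delta_n}$ $f$-iterates are $\delta_n$-dense in $Y$ and which lands in a plaque of $\fD$, after which I fill the remaining $rT_n-N_{\delta_n}$ iterates with plaque steps (those positions lie in $R_\infty$, so no itinerary constraint applies, and the superscripts are chosen to restore the Birkhoff balance over the component). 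The diameter estimates \eqref{i.dflipflop3} and the analogous estimates in Definition~\ref{d.dflipfloptail} guarantee that the set of $x\in D$ realising a prescribed chain of $f^r$-length $L$ has diameter controlled by $\zeta_L$ (respectively by the $\eta_\bullet$ inside sojourns), so the nested intersection over $L\to\infty$ is a single point $x$.

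It then remains to verify the three conclusions. Condition \eqref{i.p.density} is immediate from the choice $\delta=\delta_n$ in the sojourn placed in each size-$T_n$ component, and condition \eqref{i.p.itinerary} was built in as above. Condition \eqref{i.p.control} is the substantive point and is handled exactly as in \cite{BDB:}: inductively on $n$, a regular $T_n$-regular interval is the concatenation of $\kappa_n$ many $T_{n-1}$-regular intervals, at most one of which is a tail component; the tail component contributes to the $\varphi_r$-average at most $\sup_X|\varphi|/\kappa_n$ in absolute value, and the superscripts of the plaques of the remaining sub-intervals can be tuned — on top of the inductive scale-$(n-1)$ control — so that their signed contributions cancel up to an error of order $\alpha_{n-1}/\kappa_n$; by the choice of $\kappa_n$ both errors are $\le\alpha_n$. (A tail component of $f^r$-length $T_m$ is itself $\bar\alpha$-controlled at scale $m$, because its sojourn sub-block of $f$-length $N_{\delta_m}$ is negligible in $rT_m$.) The one place where the construction could break down is this balancing of Birkhoff sums across all scales — it is the entire reason for passing to a \emph{double} flip-flop family — but the itinerary merely fixes the subscripts and leaves the $\pm$-balancing untouched: within any run of positions carrying the same prescribed subscript $i$, both a plaque in $\fD^{+}_i$ (where $\varphi_r\ge\alpha$) and a plaque in $\fD^{-}_i$ (where $\varphi_r\le-\alpha$) are always available by \eqref{i.dflipflop2} and \eqref{i.dflipflop1}. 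Hence the argument of \cite{BDB:} carries over verbatim in its quantitative part and furnishes, for every $D\in\fD$, the point $x\in D$ with the three required properties.
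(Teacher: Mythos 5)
Your proposal is correct and follows essentially the same route as the paper's own (sketchy) proof: both exploit the key observation that in a double flip-flop family the $\pm$ superscript remains free to run the Birkhoff-sum balancing of \cite[Theorem~2]{BDB:}, while the $\{0,1\}$ subscript is reserved for recording the prescribed itinerary $\omega$ on the regular part $J=\NN\setminus R_\infty$, and both take $R_\infty$ to be the explicit Toeplitz-type tail of Proposition~\ref{p.l.tailexistence}, after which rationality of $J$ and the nested-intersection construction of $x$ go through unchanged. The only noticeable structural difference is in bookkeeping: you fix the scale $\cT$ a priori from arithmetic constraints and then invoke \cite{BDB:} to produce the chain of plaques, whereas the paper interleaves the two, choosing $T_n$ from an infinite set $\cS$ of admissible stopping times furnished by the \cite{BDB:} machinery and only then extending $\Pi_{n-2}$ to $\Pi_{n-1}$; this ordering is safer because it avoids having to argue that $\cS$ contains every sufficiently large multiple of $3\kappa_{n-1}T_{n-1}$, but since you ultimately defer the quantitative balancing (including the inductive ``bounded away from zero'' requirement on scale-$n$ averages) to \cite{BDB:} anyway, the two sketches are at the same level of rigor.
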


\subsubsection{Proposition~\ref{p.maintech} implies Theorem~\ref{t.flipfloptailqual}}
We now deduce Theorem~\ref{t.flipfloptailqual}. Let $x$ be the point given by Proposition~\ref{p.maintech}
associated to $\omega\in \Omega_2$, which is a generic point for the Bernoulli measure $\xi_{1/2}$.
By \cite[Proposition 2.17]{BDB:},
if $\widetilde \mu$ is a accumulation point of the sequence of empirical measures $(\mu_n(x,f^r))_{n\in \NN}$ (recall \eqref{e.empiric}), then
for $\widetilde \mu$-almost every point $y$ it holds $\frac{1}{n} \sum_{i=0}^{n-1} \varphi_r (f^{ri}(y))=0$.
This implies that for every $\mu$ generated by $x$ for $f$
it also holds that
\begin{equation}\label{e.zero-av}
\frac{1}{n} \sum_{i=0}^{n-1} \varphi (f^{i}(y))=0, \quad
\mbox{for $ \mu$-almost every point $y$.}
\end{equation}
Moreover,
by \cite[Proposition 2.2]{BDB:} for every measure $\mu$ generated by $x$
one has the $f$-orbit of  $\mu$-almost every point $y$ is dense in $Y$.

Since $R_\infty$ is rational we have that its complement $J=\N\setminus R_\infty$ is also rational and has  a density
$d(J)>0$, see Remark~\ref{r.complement}.  By Theorem~\ref{thm:main}, every $f^r$-invariant measure $\widetilde \mu$ generated by $f^r$ along the orbit of $x$ satisfies $h(\widetilde\mu) > d(J)\log 2=\lambda>0$. Therefore, every $f$-invariant measure $\mu$ generated by $f$ along the orbit of $x$
has entropy at least $\lambda/r$. This implies that the ergodic decomposition of $\mu$ has some measure $\nu$ with full support, positive entropy,
and, by \eqref{e.zero-av}, $\int \varphi d\nu=0$. The proof of Theorem~\ref{t.flipfloptailqual} is now complete. \hfill $\square$

\subsection{Proof of Proposition~\ref{p.maintech}}
Fix sequences $\bar \alpha =(\alpha_n)_{n\in \NN}$ and  $\bar \delta=(\delta_n)_{n\in \NN}$ of positive reals converging to $0$. Take any $\omega\in\Omega_2$. 





Consider a double flip-flop $\fD=\fD^+_0 \sqcup \fD^+_1 \sqcup \fD^-_0 \sqcup \fD^-_1$ associated to $\varphi_r$ and $f^r$ for some $r\ge 1$ with sojourns along $Y$. Let $\mathbf{K}=(K_0,K_1)$ denote the associated division of $\fD$. 
We let $\fD^+\eqdef \fD^+_0 \sqcup \fD^+_1$ and $\fD^-\eqdef\fD^-_0 \sqcup \fD^-_1$ and note that $\fD^+\sqcup\fD^-$ is a flip-flop family for $f^r$. We also consider
$\fD_0\eqdef \fD^+_0 \sqcup \fD^-_0$ and    $\fD_1\eqdef \fD^+_1 \sqcup \fD^-_1$.

Following \cite{BDB:} we will use the induction on $n$ to construct a scale $\cT=(T_n)_{n\in\N}$ and a $\cT$-sparsely long tail $R_\infty$ (see Section \ref{s.scalesandtails}) such that there exists a point $x\in X$ satisfying conditions \eqref{i.p.control}, \eqref{i.p.density}, and \eqref{i.p.itinerary} of our proposition.

After $n$ steps of our induction we will have $T_0,\ldots, T_n$ and  
$\Pi_{n-1}=R_\infty\cap [0,T_n-1]$. 
Assume that all these objects are defined up to the index $n-1$. Note that no parameters beyond $n$ are required to check that some set $R\subset [0,T_n-1]$ satisfies the conditions from the definition of the $\cT$-sparsely long tail. Furthermore, knowing that $\Pi_{n-2}$ satisfies these conditions we can use translates of this set by a multiple of $T_{n-1}$ to get a set which we declare to be $\Pi_{n-1}=R_\infty\cap[0,T_{n}-1]$. The double flip-flop family is used as follows: the partition $\fD^+\sqcup \fD^-$ is used for controlling averages and the partition
$\fD_0 \sqcup \fD_1$ is used to follow a prescribed itinerary.


In the above situation, following the reasoning in \cite{BDB:} we obtain that there is an infinite set $\cS$ of multiples of $T_{n-1}$ such that
for every $S\in\cS$ and every $R\subset [0,S-1]$ following the rules of a tail (up to time $S$) and such that $R\cap[0,T_{n-1}-1]=\Pi_{n-1}$, given any $D\in \fD$
there is a family of plaques
$D_i\in \fD$,
$i\not\in R$,
 such that
\begin{itemize}
\item
for every $i,j\in [0,S-1]\setminus R$ with $j>i$ it holds $ D_j \subset f^{r(j-i)} (D_i)$, 
\item
for every
$x\in f^{-rS} (D_{S})$ the 
orbit segment $\{x,f^r(x),\dots,f^{rS}(x)\}$ is controlled for $\varphi_r$ with parameters $(\alpha_1,\dots,\alpha_n)$  and the  tail $R$, that is, 
for every $i\le n$ and every $T_i$-regular interval $I$ contained in $[0,S-1]_\N$ that is not contained in $R$ the average of $\varphi_r$ over $I$ is in $[-\alpha_i, -\alpha_i/2] \cup [\alpha_i/2, \alpha_i]$.

\item for every $x\in f^{-rS} (D_{S})$ and every component $I=[a,b]_\N$ of $R$ of size $T_i$
 the orbit segment $\{f^{i}(x):i\in [ar,br]_\N\}$ is $\delta_i$-dense in $Y$.
 \end{itemize}
Actually, exploiting the fact that we deal with double flip-flop family, we can combine the reasoning of \cite[Section 2.5.2]{BDB:} with the one in \cite{BBD:16} to add one more claim: we choose $D_i\in \fD_{\omega_i}$. 

Now, given $\cS$ we can choose $T_n$ which is large enough to obtain that $T_{n-1}/T_n$ is sufficiently small (since $\cS$is infinite we can do it). 
Furthermore we can extend $\Pi_{n-2}$ to $\Pi_{n-1}$ exactly as in the proof of Proposition \ref{p.l.tailexistence} see formula \eqref{e.pin}. 
This completes the induction step of our construction $R_\infty$.

Now observe that the set $\bigcap_{i\in R_\infty}f^{-ri}(D_i)\subset D_{0}$ is a nested intersection of nonempty compact sets with diameters converging to $0$, thus it contains only one point $x$, which by our construction satisfies conditions \eqref{i.p.control}, \eqref{i.p.density}, and \eqref{i.p.itinerary}.

\section{Robustly transitive diffeomorphisms}
\label{s.robustly}

In this section, we prove Theorems~\ref{t.openanddense} and \ref{t.average}. 

Recall that $\cR\cT(M)$ is the  (open) subset of $\diff^1(M)$
of diffeomorphisms that are
 robustly transitive,
 have a pair  of hyperbolic periodic points of different indices,
 and have
a partially hyperbolic  splitting
$TM = E^{\mruu} \oplus E^{\mrc} \oplus E^{\mrss}$
with one-dimensional center $E^{\mathrm{c}}$,
where
$E^\mathrm{uu}$ is uniformly expanding and
	$E^\mathrm{ss}$ is uniformly contracting.
	Let $d^\mathrm{uu}$ be the dimension of
	$E^\mathrm{uu}$.
	Note that  the map
$\mathrm{J}_f^{\mrc}	\colon M \to \RR$,
$\mathrm{J}_f^{\mrc}(x) \eqdef \log | Df_x |_{E^\mrc (x)}|$ is continuous for every $f\in\cR\cT(M)$.
Recall that $(\mathrm{J}_{f}^{\mrc})_n$ stands for the Birkhoff
$n$-average of $\mathrm{J}_f^{\mrc}$, cf. \eqref{e.averages}.

\begin{theo}\label{t.l.h(q)}
There is a $C^1$-open and dense subset $\cI(M)$ of $\cR\cT(M)$ such that
for every $f\in \cI(M)$ there are $N\in \NN$ and a neighbourhood $\mathcal \cU_f\subset \cI(M)$ such that
every $g\in \cU_f$ has
a flip-flop family with respect to the map
$(\mathrm{J}_g^{\mrc})_N$ and $g^N$   
 with sojourns along $M$.
\end{theo}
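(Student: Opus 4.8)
The plan is to reduce Theorem~\ref{t.l.h(q)} to results already available in the literature on robustly transitive partially hyperbolic diffeomorphisms with one-dimensional centre, namely the existence of \emph{flip-flop configurations} (equivalently, heteroclinically related blenders of different indices) together with the sojourn property along the whole manifold. Concretely, the argument should proceed in three stages: (i) pass to a $C^1$-open and dense subset of $\cR\cT(M)$ where the two hyperbolic periodic points of different indices are replaced by (or can be continued to) blenders with the right geometry; (ii) use robust transitivity to connect these blenders, producing a flip-flop configuration, and hence a flip-flop family, associated to the logarithm of the central derivative; (iii) upgrade the flip-flop family to one \emph{with $f$-sojourns along $M$}, again exploiting robust transitivity so that orbits can be made $\delta$-dense in $M$ while preserving the flip-flop structure. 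Throughout, ``family with respect to $(\mathrm{J}_g^{\mrc})_N$ and $g^N$'' means we are allowed to work with a fixed iterate $N$, which gives the flexibility needed to make central derivatives genuinely expanding/contracting on the relevant plaques.

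First I would recall that, by the work on blenders and flip-flop configurations (the references cited in the paper, in particular \cite{BDB:} and \cite{BBD:16} and the robust transitivity results of \cite{BD-robtran}), there is a $C^1$-open and dense subset $\cI_0(M)$ of $\cR\cT(M)$ such that every $f\in\cI_0(M)$ has a flip-flop configuration: a blender $\Gamma^{\mathrm s}$ whose invariant set has central Lyapunov exponent negative and a blender $\Gamma^{\mathrm u}$ with central Lyapunov exponent positive, heteroclinically related so that the unstable ``superposition region'' of one activates the other and vice versa. The openness is clear since blenders and their heteroclinic relations are robust; density uses the pair of hyperbolic periodic points of different indices together with a $C^1$-perturbation turning (a power of) each of them into a blender, plus a connecting perturbation made possible by (robust) transitivity. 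I would then invoke the standard translation (as in \cite[\S\,on flip-flop families]{BBD:16,BDB:}) that a flip-flop configuration for $f$ yields, for some iterate $N$, a flip-flop family $\fF=\fF^+\sqcup\fF^-$ of strong-unstable plaques associated to $(\mathrm{J}_f^{\mrc})_N$ and $f^N$ in the sense of Definition~\ref{d.flipflop}: the plaques are discs tangent to a strong-unstable cone field sitting inside the blenders, $\fF^+$ lying where the iterated central derivative is $>\alpha$ and $\fF^-$ where it is $<-\alpha$, with the covering property \eqref{i.flipflop2} coming from the blender superposition property and the contraction property \eqref{i.flipflop33} from uniform expansion of $E^{\mruu}$ under $f^{-1}$. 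All of this is persistent under $C^1$-perturbations: replacing $\cI_0(M)$ by a possibly smaller $C^1$-open and dense set $\cI(M)$, for each $f\in\cI(M)$ there is a neighbourhood $\cU_f$ and a uniform $N$ such that every $g\in\cU_f$ carries such a flip-flop family, with constants $\alpha,\zeta_n$ depending continuously on $g$ (one can shrink $\cU_f$ to make them uniform).

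The remaining point, and the one I expect to be the main obstacle, is the \emph{sojourn along $M$} property (Definition~\ref{d.flipfloptail}): inside each plaque $D\in\fF$ one must find subsets $\widehat D^\pm$ whose first $N_\delta$ forward iterates are $\delta$-dense in all of $M$ and which return into plaques of $\fF^\pm$ with controlled backward diameters. The mechanism is robust transitivity: since $g$ is (robustly) transitive, there is a point whose $g$-orbit is $\delta$-dense in $M$; by density of the strong-unstable plaques of the blenders and the blender's absorbing property, one can arrange a strong-unstable disc that first follows (a piece of) such a dense orbit and then is ``caught'' again by the blender so as to land, after some large time $N_\delta$, on a full plaque of $\fF^+$ (respectively $\fF^-$). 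Making this precise requires (a) a quantitative use of transitivity to get, for each $\delta$, a uniform time $N_\delta$ working for every plaque simultaneously — here compactness of $M$ and of the plaque families helps, and one may need to pass to a further iterate; (b) checking that the backward-contraction estimates \eqref{i.defff2} survive, which follows from uniform hyperbolicity of $E^{\mruu}$; and (c) ensuring the whole construction is $C^1$-robust, i.e. works uniformly for $g\in\cU_f$, which again is obtained by shrinking $\cU_f$ and using continuity of the dominated splitting and of the blenders. Once this is in place, Theorem~\ref{t.l.h(q)} follows, and it feeds directly into Theorem~\ref{t.flipfloptailqual} (via Proposition~\ref{p.yieldsdouble} and Proposition~\ref{p.maintech}) to produce the ergodic, fully supported, positive-entropy measure with $\int \mathrm{J}_f^{\mrc}\,d\mu=0$, completing the proof of Theorem~\ref{t.openanddense}; the version with an arbitrary continuous $\varphi$ and two periodic points of possibly equal index, giving Theorem~\ref{t.average}, is obtained the same way, replacing $\mathrm{J}_f^{\mrc}$ by $\varphi$ and the index condition by the sign condition on $\int\varphi\,d\mu_{\cO(p_f)}$ and $\int\varphi\,d\mu_{\cO(q_f)}$ when building the flip-flop family.
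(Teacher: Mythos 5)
Your overall plan has the right ingredients — blenders, a flip-flop configuration, the translation into a flip-flop family of strong-unstable plaques, and the sojourn property — and it correctly identifies the relevant references. This is essentially the same route as the paper: the paper also uses Proposition~\ref{p.dandovoltas} (from \cite{BDB:}) to get, on an open and dense set, a blender in flip-flop configuration with a saddle whose homoclinic class is all of $M$, and then invokes \cite[Theorem~6.8]{BDB:} to produce a flip-flop family with sojourns.

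Where your proposal goes wrong is in where it locates the difficulty. You flag the sojourn property as ``the main obstacle'' and suggest building it from scratch via robust transitivity. In fact the paper gets sojourns for free: Theorem~\ref{t.p.flipfloptail} already yields sojourns along the homoclinic class $H(q_f,f)$, and Proposition~\ref{p.dandovoltas} gives $H(q_f,f)=M$, so this part is stated to be exactly as in \cite[Proposition~5.2]{BDB:}. Conversely, the point the paper spends almost all its effort on — and the genuine novelty of Theorem~\ref{t.l.h(q)} over the results of \cite{BBD:16,BDB:} — is the \emph{local uniformity of $N$}: a priori the iterate $N$ produced by the flip-flop-configuration-to-flip-flop-family construction could vary with the diffeomorphism, and the theorem claims a single $N$ works throughout a $C^1$-neighbourhood $\cU_f$. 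You dismiss this in one clause (``one can shrink $\cU_f$ to make them uniform''), but this is exactly the step that requires an argument: the paper re-opens the construction of \cite[Lemma~4.11, Definition~4.15, Lemmas~4.17--4.18]{BBD:16} to check that the auxiliary disc family $\mathfrak D_q$, the return time $k_0$ guaranteeing the covering property, and the shrinking $m$ that controls the sign of $(\mathrm{J}_g^{\mrc})_N$ can all be chosen locally constant in $g$. Without this verification the statement of Theorem~\ref{t.l.h(q)} (and hence the locally uniform lower bound $c_f$ on the entropy in Theorems~\ref{t.openanddense} and \ref{t.average}) does not follow. A minor further inaccuracy: the flip-flop configuration in the paper involves one blender (index $d^{\mruu}+1$) and one saddle $q_f$ of index $d^{\mruu}$, not two heteroclinically related blenders as you describe; both variants can be made to work, but the paper's version is what \cite[Theorem~6.8]{BDB:} is formulated for.
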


A crucial point here is that we get a single $N$ such that for every $g$ near $f$ we get a flip-flop family associated with
$g^N$ (we will pay special attention to this fact). A priori, the number $N$ for the flip-flop families constructed in \cite{BBD:16,BDB:} could depend on $f$. Since the flip-flop family for $f^N$ leads (through the criterion in Theorem \ref{thm:mainbis}) to an invariant measure with entropy bounded below by a constant times $\log 2/N$, we need the number $N$ to be locally invariable to get uniform local lower bounds for the entropy of  measures we find. This is precisely what we obtain from Theorem~\ref{t.l.h(q)}.

\begin{proof}[Sketch of the proof Theorem~\ref{t.l.h(q)}]
Our hypotheses imply that every
$f\in \cI(M)$ has a pair of saddles $p_f$ and $q_f$ of indices, respectively, $d^\mathrm{uu}$ and $d^\mathrm{uu}+1$. The saddles depend continuously on $f$  and the indices  are locally constant. Furthermore, the homoclinic classes of the saddles satisfy $H(p_f,f)=H(q_f,f)=M$ (see \cite[Proposition 7.1]{BDB:}
which just summarises results from \cite{BDPR}).

The discussion below involves the notions of a  {\emph{dynamical blender}} and a {\emph{flip-flop configuration.}}
As we do not need their precise definitions and will only use some specific properties of them,
we will just give rough definitions of these concepts  an refer to
\cite{BDB:} and \cite{BBD:16} for details.  In what follows, the discussion is restricted to our partially hyperbolic setting and to small open subset
of $\cI(M)$ where the index $d^\mathrm{uu}$ is constant.

%
Recall that a family of discs $\mathfrak{D}$  is \emph{strictly $f$-invariant} if there is
 an $\varepsilon$-neigh\-bour\-hood of  $\mathfrak{D}$
such that for every disc
$D_0$ in a such a  neighbourhood
 there is a disc $D_1\in \mathfrak{D}$ with
$D_1\subset f(D_0)$, see \cite[Definition 3.7]{BBD:16}.

 A {\emph{dynamical blender}} (in what follows we simply say a {\emph{blender}}) of a diffeomorphism $f$ is a locally maximal (in
an open set $U$) and transitive hyperbolic set
 $\Gamma$ of index $d^\mathrm{uu}+1$
endowed with an strictly $f$-invariant family of discs $\mathfrak{D}_f$ of  dimension $d^\mathrm{uu}$
tangent to  an invariant expanding cone field ${\mathcal{C}}^{\mathrm{uu}}$ around $E^\mathrm{uu}$. Hence, a blender is $4$-tuple $(\Gamma_f, U,{\mathcal{C}}^{\mathrm{uu}}, \mathfrak{D}_f)$. In what follows, let us simply denote the blender as $(\Gamma_f,\mathfrak{D}_f)$.

  As the usual hyperbolic sets,
blenders are $C^1$-robust  and have continuations. 
By \cite[Lemma 3.8]{BBD:16} strictly invariant families are robust: for every $g$ sufficiently close to $f$ the family $\mathfrak{D}_f$ is also strictly invariant for $g$.
As a consequence, if   $(\Gamma_f,\mathfrak{D}_f)$ is a blender of $f$ then $(\Gamma_g,\mathfrak{D}_f)$ is a blender of $g$ for every $g$ close to $f$,
where $\Gamma_g$ is the hyperbolic continuation of $\Gamma_f$. In what follows we will omit the subscripts for simplicity.

We can speak of the index of a blender $(\Gamma,\mathfrak{D})$ (the dimension of the unstable bundle of $\Ga$).
Given a saddle  of the same index as  the blender  we say that the blender  and the saddle are homoclinically related if their invariant manifolds intersect cyclically and transversely (this is a natural extension of the homoclinic relation of a pair of saddles).

In what follows, we consider blenders  which are expanding in
the center direction, that is, with index $d^\mathrm{uu}+1$.
Consider now a saddle $p$ of index $d^\mathrm{uu}$. The saddle $p$ and the blender $(\Gamma,\mathfrak{D})$ are in
a {\emph{flip-flop configuration}} if $W^\mathrm{u} (p,f)$ contains some disc of the family $\mathfrak{D}$  of the blender
 and the unstable manifold of the blender transversely intersects the stable manifold of the saddle
(note that the sums of these manifolds exceeds by one the dimension of the ambient space). By transversality and the openess of the invariant family the flip-flop configurations are also $C^1$-robust.

The results of \cite[Section 6.5.1]{BDB:} are summarised in the following proposition.
\begin{propo}
\label{p.dandovoltas}
There is an open and dense subset $\cF(M)$ of
$\cR\cT(M)$
such that every diffeomorphisms $f\in \cF(M)$ has a pair of saddles
$p_f$ and $q_f$ of different indices and
 a blender $(\Ga_f,\mathfrak{D}_f)$ such that:
 \begin{itemize}
 \item
 $H(p_f,f)=H(q_f,f)=M$,
 \item
 $\Gamma_f$ is
homoclinically related to $p_f$,
\item
$\Gamma_f$ and $q_f$
 are in a flip-flop configuration,
\item there is a metric on $M$ such that
$\mathrm{J}_f^{\mrc}$ is positive in a neighbourhood of $\Ga_f$ and negative in a neighbourhood
of the orbit of $q_f$.
\end{itemize}
\end{propo}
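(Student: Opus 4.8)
The plan is to deduce Proposition~\ref{p.dandovoltas} by assembling the perturbation results of \cite{BDPR,BBD:16,BDB:}; since the statement is presented as a summary of \cite[Section 6.5.1]{BDB:}, what one has to do is organise these ingredients and check that the four listed properties can be realised simultaneously and robustly. First I would pass to the $C^1$-open and dense subset of $\cR\cT(M)$ on which a fixed pair of hyperbolic periodic saddles $p_f,q_f$ of the two extreme indices, $d^\mathrm{uu}+1$ and $d^\mathrm{uu}$, depend continuously on $f$ and satisfy $H(p_f,f)=H(q_f,f)=M$; this is \cite[Proposition 7.1]{BDB:}, which summarises \cite{BDPR}, and for robustly transitive diffeomorphisms these equalities of homoclinic classes with $M$ are themselves $C^1$-robust. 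Relabelling if needed, I take $p_f$ to be the center-expanding saddle (index $d^\mathrm{uu}+1$) and $q_f$ the center-contracting one (index $d^\mathrm{uu}$), and I work from now on inside this open and dense set.

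Next I would create the blender. Since $p_f$ is a saddle of index $d^\mathrm{uu}+1$ whose homoclinic class is all of $M$, a $C^1$-small perturbation (as in \cite{BBD:16}) produces a dynamical blender $(\Gamma_f,\mathfrak{D}_f)$ of index $d^\mathrm{uu}+1$ homoclinically related to $p_f$, together with its strictly invariant family $\mathfrak{D}_f$ of $d^\mathrm{uu}$-dimensional discs tangent to a cone field around $E^\mathrm{uu}$. Blenders have continuations and strictly invariant disc families are $C^1$-robust by \cite[Lemma 3.8]{BBD:16}, so the homoclinic relation between $\Gamma_g$ and $p_g$ persists for all $g$ near $f$. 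To get the flip-flop configuration, I would exploit that $W^\mathrm{u}(q_f,f)$ and $W^\mathrm{s}(q_f,f)$ are dense in $M=H(q_f,f)$: a further $C^1$-small perturbation makes $W^\mathrm{u}(q_f,f)$ contain some disc of the family $\mathfrak{D}_f$ and makes $W^\mathrm{u}(\Gamma_f,f)$ intersect $W^\mathrm{s}(q_f,f)$ transversely, which is precisely a flip-flop configuration between $\Gamma_f$ and $q_f$; by transversality and the openness of the strictly invariant family this configuration is $C^1$-robust. This is the content of \cite[Section 6.5.1]{BDB:}, where it is also verified that these perturbations can be arranged so as to keep $H(p_f,f)=H(q_f,f)=M$ and to keep $\Gamma_f$ and the periodic orbit $\cO(q_f)$ disjoint compact sets.

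It remains to produce the metric. On the hyperbolic set $\Gamma_f$ the one-dimensional bundle $E^\mathrm{c}$ is uniformly expanded, while on the finite periodic orbit $\cO(q_f)$ it is uniformly contracted; since these two sets are disjoint and compact, a standard adapted-metric argument (averaging the norm along orbit segments, carried out locally near each of the two sets) yields a Riemannian metric on $M$ for which $\mathrm{J}_f^{\mrc}>0$ on a neighbourhood of $\Gamma_f$ and $\mathrm{J}_f^{\mrc}<0$ on a neighbourhood of $\cO(q_f)$; as $\mathrm{J}_g^{\mrc}$ depends continuously on $g$ in the $C^1$-topology, these (open) sign conditions persist for $g$ near $f$.

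Setting $\cF(M)$ equal to the set of $f\in\cR\cT(M)$ for which all four properties hold, the construction shows that $\cF(M)$ is dense, and the robustness of blenders, homoclinic relations, flip-flop configurations, the equalities $H(p_g,g)=H(q_g,g)=M$, and the sign conditions on $\mathrm{J}_g^{\mrc}$ show that it is open. The step I expect to be the main obstacle — and the reason this is stated as a consequence of \cite{BDB:} rather than proved in isolation — is the simultaneous realisation: performing the perturbations that create the blender and the flip-flop configuration without destroying the equalities of the homoclinic classes with $M$ and without colliding $\Gamma_f$ with $\cO(q_f)$, so that all four conditions, including the existence of the adapted metric, hold together and robustly.
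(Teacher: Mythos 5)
Your proposal is correct and takes essentially the same approach as the paper, which proves Proposition~\ref{p.dandovoltas} only by a one-line citation to \cite[Section 6.5.1]{BDB:}. Your sketch is a faithful reconstruction of that citation---assembling the homoclinic-class equalities from \cite{BDPR} and \cite[Proposition 7.1]{BDB:}, the blender and flip-flop configuration via $C^1$-small perturbations, and the adapted metric from the center expansion/contraction on the disjoint compact sets $\Gamma_f$ and $\cO(q_f)$---and you correctly identify the main delicate point (making the perturbations coexist robustly) and use the index labelling that is actually consistent with the proposition (the sketch of Theorem~\ref{t.l.h(q)} states the indices of $p_f$ and $q_f$ the other way round, which your ``relabelling if needed'' quietly repairs).
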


From now on, we will always consider $M$ with a metric given by Proposition \ref{p.dandovoltas}.
Let us recall another result from \cite{BDB:}.

\begin{theo}[Theorem 6.8 in \cite{BDB:}] \label{t.p.flipfloptail}
Consider $f\in \Diff^1(M)$ with a
dynamical blender $(\Ga,\mathfrak{D})$ in a flip-flop configuration with a hyperbolic periodic point $q$.
Let
$\varphi\colon M\to\RR$ be a continuous function such that $\varphi|_\Ga>0$ and $\varphi|_{\cO(q)}<0$.

Then there are $N\geq1$ and a flip-flop family $\mathfrak{F}$
 with respect to $\varphi_N$ and
 $f^N$  which  $f$-sojourns along the homoclinic class
 $H(q,f)$.
%
\end{theo}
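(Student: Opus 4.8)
The plan is to build the two halves $\fF^+,\fF^-$ of the desired family out of, respectively, the discs of the strictly $f$-invariant family $\mathfrak D$ of the blender and the local unstable manifold $W^{\mathrm u}_{\mathrm{loc}}(q)$, and to check the axioms of Definitions~\ref{d.flipflop} and~\ref{d.flipfloptail} using the two transverse connections of the flip-flop configuration together with the $\lambda$-lemma. First I would fix the scales: by continuity of $\varphi$ and compactness, pick $\alpha>0$ and disjoint open sets $U_\Gamma\supset\Gamma$, $U_q\supset\cO(q)$ with $\varphi>2\alpha$ on $\overline{U_\Gamma}$, $\varphi<-2\alpha$ on $\overline{U_q}$, shrinking $U_\Gamma$ so that $\mathfrak D\subset U_\Gamma$; set $C=\sup_M|\varphi|$. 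Every disc used below will be a $d^{\mathrm{uu}}$-dimensional $C^1$-disc tangent to the expanding cone field $\mathcal C^{\mathrm{uu}}$; since $E^{\mathrm{uu}}$ is uniformly expanding and the splitting is dominated, the backward iterates of such discs contract uniformly, so every chain $D_{i+1}\subset f(D_i)$ of them satisfies $\mathrm{diam}(f^{-i}(D_n))\le\zeta_i$ with $\zeta_i\to0$ geometrically; the same estimate yields the sequences $\eta_i$. This settles condition~\eqref{i.flipflop33} of Definition~\ref{d.flipflop} and item~\eqref{i.defff2} of Definition~\ref{d.flipfloptail} once and for all, and leaves only the bookkeeping of positions and of $\varphi$-averages.

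The core is a small repertoire of ``moves'' of a bounded number of iterates. \emph{Stay-$+$:} by strict invariance of $\mathfrak D$ a disc of $\mathfrak D$ contains, in its image, another disc of $\mathfrak D$, so one can run an itinerary inside $\mathfrak D\subset U_\Gamma$ for arbitrarily many steps. \emph{Stay-$-$:} as $q$ is periodic and cone-discs near $W^{\mathrm u}_{\mathrm{loc}}(q)$ grow inside $W^{\mathrm u}(q)$, one can keep a cone-disc near $W^{\mathrm u}_{\mathrm{loc}}(q)\subset U_q$ for arbitrarily many steps. \emph{Cross $+\!\to\!-$:} route a chain in $\mathfrak D$ so that some iterate contains a sub-disc transverse to $W^{\mathrm s}_{\mathrm{loc}}(q)$ (possible since $W^{\mathrm u}(\Gamma)\pitchfork W^{\mathrm s}(q)$ and $\mathfrak D$ is strictly invariant); by the $\lambda$-lemma its forward iterates accumulate on $W^{\mathrm u}_{\mathrm{loc}}(q)$, giving a $-$ disc. \emph{Cross $-\!\to\!+$:} since $W^{\mathrm u}(q)$ contains a disc of $\mathfrak D$, forward iterates of a cone-disc near $W^{\mathrm u}_{\mathrm{loc}}(q)$ eventually contain a sub-disc $C^1$-close to a disc of $\mathfrak D$, so by strict invariance the next iterate contains a disc of $\mathfrak D$, a $+$ disc. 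Using the two ``stay'' moves, all four can be padded to a common length $N$, which I would take so large that in each move the number of iterates spent outside $U_\Gamma\cup U_q$ is at most $\eps N$ for a fixed $\eps>0$ with $2\alpha(1-\eps)-C\eps>\alpha$.

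Now let $\fF^+$ be the cone-discs on which the first $N$ iterates of $f$ lie in $U_\Gamma$ off an index set of density $\le\eps$ (e.g.\ the discs obtained by intersecting a length-$N$ chain in $\mathfrak D$), and $\fF^-$ the analogous discs with $U_q$ in place of $U_\Gamma$. The choice of $\eps$ gives $\varphi_N>\alpha$ on $F_\fF^+$ and $\varphi_N<-\alpha$ on $F_\fF^-$, which is~\eqref{i.flipflop1}; condition~\eqref{i.flipflop2} holds since from any $D\in\fF$ one applies one length-$N$ move ending in $\fF^+$ and one ending in $\fF^-$, both inside $f^N(D)$; and~\eqref{i.flipflop33} is already done. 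This gives a flip-flop family for $\varphi_N$ and $f^N$. For the $f$-sojourns along $H(q,f)$, recall that $H(q,f)$ is the closure of the transverse homoclinic orbit of $q$. Given $\delta>0$, take a $\delta/2$-net $z_1,\dots,z_L$ of $H(q,f)$, each $\delta/2$-shadowed by a point of a transverse homoclinic orbit $\gamma_\ell$ of $q$; starting from any $D\in\fF$, route a sub-disc to a fundamental domain of $W^{\mathrm u}_{\mathrm{loc}}(q)$ and then, using the homoclinic relation of $q$ with itself and the $\lambda$-lemma, concatenate excursions bringing the disc $\delta$-close to $z_1,\dots,z_L$ in turn; carry two sub-discs along this common prefix and route them, respectively, back into $\mathfrak D$ (for $\widehat D^+$) and back to $W^{\mathrm u}_{\mathrm{loc}}(q)$ (for $\widehat D^-$). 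The total length $N_\delta$ depends on $\delta$ through $L$ and, padding with ``stay'' moves, can be taken a multiple of $N$ as in Remark~\ref{r.multiple}; the $\eta_i$ are again the cone-field estimate.

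The step I expect to be the main obstacle is the bookkeeping hidden behind ``route'': making each move have exactly the prescribed length, keeping every intermediate disc tangent to $\mathcal C^{\mathrm{uu}}$ with controlled diameter, keeping the running $N$-average of $\varphi$ strictly beyond $\pm\alpha$ at every stage, and — for the sojourns — checking that the concatenated orbit segment becomes $\delta$-dense in the \emph{whole} class $H(q,f)$. All of this is exactly the blender / flip-flop-configuration technology of \cite{BBD:16,BDB:} (strict invariance of $\mathfrak D$ is $C^1$-robust, the $\lambda$-lemma applies to discs tangent to a cone field, and the homoclinic class is the closure of a single transverse homoclinic orbit of $q$), and indeed the statement is Theorem~6.8 of \cite{BDB:}; I would therefore assemble it from the lemmas there rather than redo those estimates.
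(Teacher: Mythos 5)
Your sketch is correct and follows essentially the same route as the paper, which does not prove this statement itself but imports it as Theorem~6.8 of \cite{BDB:}; the construction it recalls when proving Theorem~\ref{t.l.h(q)} is exactly yours ($\fF^+$ built from the blender's strictly invariant disc family, $\fF^-$ from expanded cone-discs around $W^{\mathrm{u}}_{\mathrm{loc}}(q)$, the two crossings supplied by the transverse connections of the flip-flop configuration plus the $\lambda$-lemma, the averages controlled by confining all but a small fraction of each $N$-block to neighbourhoods where $\varphi$ has a definite sign, and the sojourns obtained from the density of the homoclinic orbit in $H(q,f)$). The uniformity issues you flag --- bounded transit times and making every intermediate disc a legitimate member of the family rather than an arbitrary cone-disc mostly lying in $U_\Gamma$ --- are precisely what the lemmas of \cite{BBD:16,BDB:} to which you defer are designed to handle.
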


We can now apply Theorem~\ref{t.p.flipfloptail} to the flip-flop configuration associated to the blender $(\Gamma_f,\mathfrak{D}_f)$
and the saddle $q_f$ provided by Proposition~\ref{p.dandovoltas} and the map $\mathrm{J}_f^{\mrc}$.
This provides the flip-flop family associated to the map $\mathrm{J}_f^{\mrc}$. The fact that the sojourns take
place in the whole manifold follows from $H(q_f,f)=M$.
To complete the sketch of the proof of
Theorem~\ref{t.l.h(q)} it remains to get the uniformity of $N$.

To get such a control we need to recall some steps of the construction in \cite{BBD:16}.

Let us explain how to derive the flip-flop family $\mathfrak{F}=\fF^+\sqcup\fF^-$ associated to $f^{N_f}$  and the number $N_f$ from the flip-flop configuration of the saddle $q_f$  and the blender $(\Gamma_f, \mathfrak{D}_f)$.
The sub-family  $\mathfrak{F}^+$ is formed by  the discs of $\mathfrak{D}_f$. To define
$\mathfrak{F}^-$ let us assume, for simplicity, that $f(q_f)=q_f$.
We consider an auxiliary
 family  $\mathfrak{D}_q$ of $C^1$-embedded discs
 containing $W^\mathrm{u}_{\delta}(q_f,f)$ (for sufficiently small $\delta$) in its interior and consisting of small discs
$D$ such that
\begin{enumerate}
\item[a)]
every $D$ intersects transversely $W^\mathrm{s}_\delta(q_f,f)$ and is tangent to a small cone field around $E^{\mathrm{uu}}$,
\item[b)]
there is $\lambda>1$ such that
 $\|Df(v)\| \ge \lambda \|v\|$
for every vector $v$ tangent to $D$,
\item[c)]
$f(D)$ contains a disc in $\mathfrak{D}_q$.
\end{enumerate}
For the existence of the family $\mathfrak{D}_q$ and its
precise definition
see \cite[Lemma 4.11]{BBD:16}.
It turns out that for every $g$ nearby $f$ the family $\mathfrak{D}_q$ also satisfies these properties for $g$.
We let $\mathfrak{F}^-=\mathfrak{D}_q$.

Observe now that due to the flip-flop configuration $W^u(q_f,f)$ contains a disc $D'\in \mathfrak{D}_f=\mathfrak{F}^+$.
Hence there is large $k_0$ such that for every disc $D\in \mathfrak{D}_f=\mathfrak{F}^+$ and every $N\ge k_0$  the disc $f^N(D)$ contains a sub-disc close enough
to $D'$ and hence contains a disc in $\mathfrak{D}_f=\mathfrak{F}^+$ (note that this family is necessarily open). Note also that $f^N(D)$ contains a disc of $\mathfrak{D}_q$ by
(c). Observe that the choice of $k_0$ holds for every $g$ nearby $f$.
A similar construction holds for the images of the discs in $\mathfrak{D}=\mathfrak{F}^-$, now we use that $W^s(q_f,f)$ transversely intersects every disc
in $\mathfrak{D}_f=\mathfrak{F}^+$. In this way, we get a uniform $N$ in such a way the family
satisfies condition \eqref{i.flipflop2}  in the definition of a flip-flop family (Definition~\ref{i.flipflop33}). In our partially hyperbolic case, condition
\eqref{i.flipflop33}  follows because all the discs we consider are tangent to a strong unstable cone field.

It remains to get condition \eqref{i.flipflop1} on the averages of $(\mathrm{J}_f^{\mrc})_N$. For this, some additional shrinking of the discs of the blender is needed. We will follow \cite[Section 4.4]{BBD:16}.
Note that the map $\mathrm{J}_f^{\mrc}$ is positive for the points in the set $\Gamma_f$ (here we recall that $E^\mathrm{c}$ is expanding
in a neighbourhood  $V$ of $\Gamma_f$ since we consider the metric given by Proposition \ref{p.dandovoltas}).
Consider for each disc $D$ of the family $\mathfrak{D}_f$ a sub-disc $D'$ contained in $V$ such that the family $\mathfrak{D}'_f$ formed by the sets $D'$ is invariant for $f^m$ for some $m$. Again, the same $m$ works for every $g$ sufficiently close to $f$. The precise definition of this new family  $\mathfrak{D}'_f$ is in \cite[Definition 4.15]{BBD:16} and the invariance properties are in \cite[Lemmas 4.17 and 4.18]{BBD:16}.

 Finally, observe that once we have obtained the flip-flop family $\mathfrak{F}=\fF^+\sqcup\fF^-$ the proof that this family has sojourns is exactly as in \cite[Proposition 5.2]{BDB:}.

 This completes our sketch of the proof of Theorem~\ref{t.l.h(q)}.
\end{proof}

\subsection{Proof of Theorem~\ref{t.openanddense}}
The theorem  follows immediately from Theorem~\ref{t.flipfloptailqual} and Theorem~\ref{t.l.h(q)}.

\subsection{Proof of Theorem  \ref{t.average}}
Recall that for a periodic point $p_f$ of $f$ we denote by $\mu_{\cO(p_f)}$ the
unique $f$-invariant probability measure supported on the orbit of $p_f$.
Consider now periodic points $p_f$ and $q_f$ of $f$
satisfying $\int \varphi\, d\mu_{\cO(p_f)}  <0<
\int \varphi \, d\mu_{\cO(q_f)}$.

To prove Theorem~\ref{t.average} it is enough to consider the case where the saddles $p_f$ and $q_f$ have the same index and
are homoclinically related (which is an open and dense condition in $\mathcal{RT}(M)$).
In \cite[Section 5.3]{BDB:} it is explained how the case where the saddles have different indices is reduced to this ``homoclinically related''  case: after an arbitrarily small perturbation of $f$ one gets $g$ with a
saddle-node $r_g$ with $\int \varphi  d\mu_{\cO(r_g)}\ne 0$. Assume that $\int \varphi  d\mu_{\cO(r_g)}>0$. In this case
we perturb $g$ to get $h$ such that $r_h$ has the same index as $p_h$ and these points are homoclinically related. Then we are in the
``homoclinically related case''.

We now prove Theorem~\ref{t.average} when the saddles $p_f$ and $q_f$ are  homoclinically related. Let us  assume for simplicity, that these saddles are fixed points of $f$.
The result follows from the construction in \cite{BDB:},
we will sketch below its main steps.

Recall
that the set $\cD^i(M)$   of
$i$-dimensional (closed) discs $C^1$-embedded in $M$ has a natural topology which is
induced by a metric $\mathfrak{d}$, for details see \cite[Proposition 3.1]{BBD:16}. For  small $\varrho>0$ consider  the
$\varrho$-neighbourhoods
 $\cV^{\mathfrak{d}}_\varrho(p_f)\eqdef \cV^{\mathfrak{d}}_\varrho(W^\mru_{loc}(p_f,f))$ and $\cV^{\mathfrak{d}}_\varrho(q_f)\eqdef
 \cV^{\fd}_\varrho(W^\mru_{loc}(q_f,f))$
of the local unstable manifolds of $p_f$ and $q_f$ for the distance $\fd$ in $\cD^i(M)$, where $i$ is the dimension of the unstable bundle
of $p_f$ and $q_f$.
We consider the following family $\fF_f=\fF_f^+\sqcup \fF_f^-$ of discs:
\begin{itemize}
 \item $\fF_f^-$ is the family of discs in $\cV^\fd_\varrho(p_f)$ contained in $W^\mru(p_f,f)\cup W^\mru(q_f,f)$;
 \item $\fF_f^+$ is the family of discs in $\cV^\fd_\varrho(q_f)$ contained in $W^\mru(p_f,f)\cup W^\mru(q_f,f)$.
\end{itemize}
Note that as $q_f$ and $p_f$ are homoclinically related
these two families are both infinite. Note also that for $\varrho>0$ small enough one has that
$\varphi$ is negative in the discs of $\fF_f^-$ and positive in the discs of $\fF_f^+$.
Note that we can define the families $\fF_g^\pm$ analogously for every $g$ close to  $f$, having also that
$\varphi$ is negative in  $\fF_f^-$ and positive in $\fF_f^+$.

We have the following result which is an improvement of  \cite[Proposition 5.2]{BDB:}. The original result is stated for a single diffeomorphism $f$. Here we have a version valid for a neighbourhood with a uniform control of $n$ in the whole neighbourhood. As in the case in the previous section, this allows us to locally bound the entropy of the measures associated to the flip-flop from below.

\begin{propo}
Consider $f$ and $\varphi$ as above. Then there is $n$ such that
the family $\fF_g$  is a flip-flop family associated to $\varphi$  and $g^n$ and
 has $g$-sojourns  along the homoclinic class $H(p_g,g)$.
 \end{propo}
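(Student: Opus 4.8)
For a single diffeomorphism this statement is \cite[Proposition 5.2]{BDB:}, so the plan is to revisit its proof and check that every constant occurring there — above all the power $n$ of $g$ to which $\fF_g$ is associated — can be chosen uniformly on a $C^1$-neighbourhood $\cW$ of $f$. (Only $n$ needs to be locally constant: the sojourn times $N_\delta$ of Definition~\ref{d.flipfloptail} may depend on $g$, and it is $n$ that governs the lower bound for the entropy.) First I would shrink $\cW$ so that the hyperbolic continuations $p_g$, $q_g$ and their local invariant manifolds are defined and depend continuously on $g\in\cW$; the homoclinic relation between $p_g$ and $q_g$ persists and is witnessed by finitely many transverse heteroclinic orbits whose angles are bounded below uniformly on $\cW$; and, for a fixed small $\varrho>0$, one has $\varphi<-\alpha$ on every disc in $\cV^{\fd}_{\varrho}(p_g)$ and $\varphi>\alpha$ on every disc in $\cV^{\fd}_{\varrho}(q_g)$, for some $\alpha>0$ independent of $g$. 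This last point uses only the continuity of $\varphi$ together with $\varphi(p_g)<0<\varphi(q_g)$ (valid since $p_f$, $q_f$ are fixed points), and it settles property~\eqref{i.flipflop1} of Definition~\ref{d.flipflop} with a uniform $\alpha$. Property~\eqref{i.flipflop33} (and the analogous item~\eqref{i.defff2} of Definition~\ref{d.flipfloptail}) is obtained exactly as in \cite{BDB:}: the diameters of backward iterates of a plaque decay under the robust contraction along the unstable manifolds of the uniformly hyperbolic saddles $p_g$, $q_g$, so the sequences $(\zeta_i)$ and $(\eta_i)$ can be fixed once and for all over $\cW$.

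The integer $n$ enters only through property~\eqref{i.flipflop2}: for every $D\in\fF_g$, the image $g^n(D)$ must contain a plaque of $\fF^+_g$ and a plaque of $\fF^-_g$. This is the step I expect to be the main obstacle, and the one that must be made uniform. By definition each $D\in\fF_g$ is $\fd$-close to $W^\mru_{loc}(p_g,g)$ or to $W^\mru_{loc}(q_g,g)$, and is contained in $W^\mru(p_g,g)\cup W^\mru(q_g,g)$; after shrinking $\varrho$ it therefore meets transversally the corresponding local stable manifold near the corresponding fixed point. By the inclination lemma the forward iterates $g^m(D)$ then $C^1$-accumulate on $W^\mru(p_g,g)$ (resp. on $W^\mru(q_g,g)$), and — travelling once along one of the fixed transverse heteroclinic orbits relating $p_g$ and $q_g$ and applying the inclination lemma a second time — also on the other. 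Hence, after a uniformly bounded number of iterates, $g^n(D)$ contains a sub-disc lying in $\cV^{\fd}_{\varrho}(p_g)$ and a sub-disc lying in $\cV^{\fd}_{\varrho}(q_g)$; as both sub-discs lie inside $W^\mru(p_g,g)\cup W^\mru(q_g,g)$, they are plaques of $\fF^-_g$ and of $\fF^+_g$ respectively. The crucial point is that the number of iterates demanded by the inclination lemma depends only on the uniformly bounded geometry of the local invariant manifolds, on the uniform lower bound for the angles of the finitely many heteroclinic orbits, and on the robust contraction rates; taking $n$ to be the maximum of the finitely many bounds so obtained (over the finitely many cases for $D$), property~\eqref{i.flipflop2} holds for $g^n$ uniformly for $g\in\cW$.

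Finally, the $g$-sojourn property of $\fF_g$ along $H(p_g,g)$ is obtained exactly as in \cite[Proposition 5.2]{BDB:}: the transverse homoclinic intersections of $p_g$ are dense in $H(p_g,g)$, so finitely many orbit segments following such intersections become $\delta$-dense in $H(p_g,g)$, yielding the sojourn time $N_\delta$ (which may depend on $g$, as allowed). Putting the pieces together, $\fF_g$ is a flip-flop family associated to $\varphi$ and $g^n$ with $g$-sojourns along $H(p_g,g)$, for every $g\in\cW$, with $n$ independent of $g$. As already indicated, the delicate point is the uniform choice of $n$ in property~\eqref{i.flipflop2}; it goes through precisely because every ingredient of the inclination-lemma argument — continuations of $p_g$, $q_g$, their local manifolds, the finitely many heteroclinic orbits with uniform transversality, and the robust contraction rates used for property~\eqref{i.flipflop33} — behaves uniformly on a $C^1$-neighbourhood of $f$.
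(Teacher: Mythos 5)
Your proposal is correct and follows essentially the same route as the paper's (very terse) proof: both start from \cite[Proposition 5.2]{BDB:} for the single diffeomorphism $f$, identify the integer $n$ via the homoclinic relation between $p_f$ and $q_f$ (an inclination-lemma argument), and then observe that this same $n$ persists for all $g$ in a $C^1$-neighbourhood because the continuations of the saddles, their local invariant manifolds, the transverse heteroclinic orbits witnessing the homoclinic relation, and the contraction rates underlying property~\eqref{i.flipflop33} all vary robustly. The paper states this persistence in one sentence and defers the remaining verifications (that $\fF_g$ is indeed a flip-flop family and has sojourns in the whole class) to \cite[Section 5.2]{BDB:}, whereas you spell out the supporting robustness facts explicitly; this adds useful detail but does not change the argument.
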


\begin{proof}
Let us recall the proof of the proposition for $f$ (\cite[Proposition 5.2]{BDB:}).
Since the saddles $p_f$ and $q_f$ are homoclinically related,
there is $n$ such that for every disc  $D\in \fF_f^\pm$ the disc $f^n(D)$ contains
discs  $D_p\in \cV^\fd_\varrho(p_f)$  and $D_q\in \cV^\fd_\varrho(q_f)$. By construction
$D\in W^\mru(p_f,f)\cup W^\mru(q_f,f)$. Observe that for $g$ close enough to $f$ and
 every disc  $D\in \fF_g^\pm$ the disc $g^n(D)$ also contains
discs $D_p\in \cV^\fd_\varrho(p_g)$  and $D_q\in \cV^\fd_\varrho(q_g)$. 

The fact that $\fF_f$ is a flip-flop family is quite
straightforward. The same proof applies to $\fF_g$. For details see \cite[Section 5.2]{BDB:}, where it is also proved that the family has sojourns
in the whole class.
\end{proof}

\bibliographystyle{plain}

\end{document}